\definecolor{verydarkblue}{rgb}{0,0,0.4}
\def\@commafont{\check@mathfonts
    \fontsize\sf@size\z@\selectfont}
\DeclareRobustCommand{\cb}[1]{{%
\setbox\z@\hbox{#1}%
\ifdim\dp\z@<.1\ht\z@\ooalign{\unhbox\z@\crcr\hidewidth\lower.3ex\hbox{\@commafont,}\hidewidth}%
\else\ooalign{\unhbox\z@\crcr\hidewidth\raise.5ex\hbox{\@commafont`}\hidewidth}%
\fi}}%
\newcommand{\grad}{\nabla}
\DeclareMathOperator{\im}{Im}
\renewcommand{\Im}{\im}
\renewcommand{\tilde}[1]{\widetilde{#1}}
\newcommand{\g}{\mathfrak{g}}
\newcommand{\tec}{Teichm\"uller }
\newcommand{\wep}{Weil-Petersson }
\newcommand{\sU}{\mathcal{U}}
\newcommand{\sT}{\mathcal{T}}
\newcommand{\sM}{\mathcal{M}}
\renewcommand{\leq}{\leqslant}
\renewcommand{\geq}{\geqslant}
\DeclareMathOperator{\ssys}{\mathrm{sys}}
\DeclareMathOperator{\diam}{diam}
\DeclareMathOperator{\dist}{dist}
\DeclareMathOperator{\Teich}{Teich}
\DeclareMathOperator{\Te}{Teich}
\DeclareMathOperator{\InR}{InRad}
\DeclareMathOperator{\Diff}{Diff}
\DeclareMathOperator{\vol}{Vol}
\newcommand{\Mod}{\mbox{\rm Mod}}
\newcommand{\lsys}{\ell_{sys}}
\DeclareMathOperator{\arccosh}{arccosh}
\DeclareMathOperator{\Vol}{Vol}
\DeclareMathOperator{\Ric}{Ric}
\DeclareMathOperator{\arcsinh}{arcsinh}
\newcommand{\param}{{\mathchoice{\mkern1mu\mbox{\raise2.2pt\hbox{$\centerdot$}}\mkern1mu}{\mkern1mu\mbox{\raise2.2pt\hbox{$\centerdot$}}\mkern1mu}{\mkern1.5mu\centerdot\mkern1.5mu}{\mkern1.5mu\centerdot\mkern1.5mu}}}
\numberwithin{equation}{section}
\theoremstyle{plain}
\newtheorem{corollary}{Corollary}
\newtheorem{proposition}{Proposition}
\newtheorem{claim}{Claim}
\theoremstyle{definition}
\newtheorem{question}{Question}
\newtheorem{remark}{Remark}
\theoremstyle{definition}
\newtheorem*{remarksenv}{Remarks}
\begin{document}

\title[Growth]
{Growth of the Weil-Petersson inradius of moduli space}

\author{Yunhui Wu}

\address{Yau Mathematical Sciences Center\\
      Tsinghua University\\
         Beijing, China, 100084\\}
\email{yunhui\_wu@mail.tsinghua.edu.cn}


\begin{abstract}
In this paper we study the systole function along \wep geodesics. We show that the square root of the systole function is uniformly Lipschitz on \tec space endowed with the \wep metric. As an application, we study the growth of the \wep inradius of moduli space of Riemann surfaces of genus $g$ with $n$ punctures as a function of $g$ and $n$. We show that the \wep inradius is comparable to $\sqrt{\ln{g}}$ with respect to $g$, and is comparable to $1$ with respect to $n$. 

Moreover, we also study the asymptotic behavior, as $g$ goes to infinity, of the \wep volumes of geodesic balls of finite radii in \tec space. We show that they behave like $o((\frac{1}{g})^{(3-\epsilon)g})$ as $g\to \infty$, where $\epsilon>0$ is arbitrary.
\end{abstract}

\subjclass{32G15, 30F60}
\keywords{The moduli space, Weil-Petersson metric, inradius, large genus, systole}

\maketitle

\thispagestyle{fancy}

\section{Introduction}
Let $S_{g,n}$ be a surface of genus $g$ with $n$ punctures with $3g+n\geq4$, and $\Teich(S_{g,n})$ be \tec space of $S_{g,n}$ endowed with the \wep metric. The mapping class group $\Mod(S_{g,n})$ of $S_{g,n}$ acts on $\Teich(S_{g,n})$ by isometries. The moduli space $\sM_{g,n}$ of $S_{g,n}$, endowed with the \wep metric, is realized as the quotient $\Teich(S_{g,n})/\Mod(S_{g,n})$. 

The moduli space $\sM_{g,n}$ is K\"ahler \cite{Ahlfors61}, incomplete \cite{Chu76, Wolpert75} and geodesically complete \cite{Wolpert87}. It has negative sectional curvature \cite{Tromba86, Wolpert86}, strongly negative curvature in the sense of Siu \cite{Schu86}, dual Nakano negative curvature \cite{LSY08} and nonpositive definite Riemannian curvature operator \cite{Wu14}. The \wep metric completion $\overline{\sM}_{g,n}$ of moduli space $\sM_{g,n}$, as a topological space, is the well-known Deligne-Mumford compactification of moduli space obtained by adding stable nodal curves \cite{Masur76}. One may refer to the book \cite{Wolpertbook} for recent developments on the \wep metric.

The asymptotic geometry of $\sM_{g,n}$ as either $g$ or $n$ tends to infinity, has recently become quite active. For example, Brock-Bromberg \cite{BB2014} showed that the shortest \wep closed geodesic in $\sM_{g,0}$ is comparable to $\frac{1}{\sqrt{g}}$. Mirzakhani \cite{Mirz07-j, Mirz10, Mirz13, MirzZ15} studied various aspects of the \wep volume of $\sM_{g,n}$ for large $g$. Together with M. Wolf \cite{WW15}, we studied the $\ell^p$-norm $(1\leq p \leq \infty)$ of the \wep curvature operator of $\sM_{g,n}$ for large $g$. The \wep curvature of $\sM_{g,0}$ for large genus was studied in \cite{Wu15-curvature}. Cavendish-Parlier \cite{CP12} studied the asymptotic behavior of the diameter $\diam(\sM_{g,n})$ of $\sM_{g,n}$. They showed that $\displaystyle\lim_{n\to \infty}\frac{\diam(\sM_{g,n})}{\sqrt{n}}$ is a positive constant. They also showed that for large genus the ratio $\frac{\diam(\sM_{g,n})}{\sqrt{g}}$ is bounded below by a positive constant and above by a constant multiple of $\ln{g}$. For the upper bound, they refined Brock's quasi-isometry of $\Teich(S_{g,n})$ to the pants graph \cite{Brock03}. As far as we know, the asymptotic behavior of $\diam(\sM_{g,n})$ as $g$ tends to infinity is still \textsl{open}. For other related topics, one may refer to \cite{FKM13, GPY11, LX09, Penner92, RT13, ST01, Zograf08} for more details.

Let $\partial \overline{\sM}_{g,n}$ be the boundary of $\overline{\sM}_{g,n}$, which consists of nodal surfaces. Let $\dist_{wp}(\cdot, \cdot)$ be the \wep distance function.  Define the \textsl{inradius} $\InR(\sM_{g,n})$ of $\sM_{g,n}$ as  
\[\InR(\sM_{g,n}):=\max_{X\in \sM_{g,n}}\dist_{wp}(X, \partial \overline{\sM}_{g,n}).\]
The inradius $\InR(\sM_{g,n})$ is the largest radius of geodesic balls (allowed to contain topology) in the interior of $\overline{\sM}_{g,n}$. In this paper, one of our main goals is to study the asymptotic behavior of $\InR(\sM_{g,n})$ either as $g\to \infty$ or $n\to \infty$.
\newline

\noindent \textbf{Notation.} In this paper, we use the notation
$$f_1 \asymp_t f_2$$ 
if there exists a universal constant $C>0$, independent of $t$, such that
$$  \frac{f_2}{C} \leq f_1\leq C f_2.$$

Our first result is
\begin{theorem}\label{mt-g}
For all $n\geq 0$ and $g\geq 2$, we have
\[ \InR (\sM_{g,n}) \asymp_g \sqrt{\ln{g}.}\]
\end{theorem}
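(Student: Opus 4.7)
The plan is to prove matching upper and lower bounds of order $\sqrt{\ln g}$ by relating the \wep distance from any $X$ to $\partial \overline{\sM}_{g,n}$ to the systole function $\ssys(X)$, and then invoking classical bounds on systoles of hyperbolic surfaces.

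\textbf{Lower bound.} The Lipschitz property of $\sqrt{\ssys}$ on $\Te(S_{g,n})$ (the main Lipschitz theorem stated in the abstract) yields, for any $X \in \Te(S_{g,n})$ and any sequence $X_k \in \Te(S_{g,n})$ with $\ssys(X_k) \to 0$ converging to some $Y \in \partial\overline{\sM}_{g,n}$,
\[\sqrt{\ssys(X)} \leq \sqrt{\ssys(X_k)} + L\cdot \dist_{wp}(X, X_k);\]
passing to the limit and taking the infimum over all such $Y$ gives $\dist_{wp}(X, \partial\overline{\sM}_{g,n}) \geq L^{-1}\sqrt{\ssys(X)}$. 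It then suffices to produce a surface $X_{g,n} \in \sM_{g,n}$ with $\ssys(X_{g,n}) \geq c \ln g$. For $n=0$ this is the celebrated Buser--Sarnak theorem. For $n\geq 1$ I would adapt their arithmetic construction by puncturing a Buser--Sarnak surface of genus $g$ at $n$ points inside a small embedded disk chosen disjoint from the systolic geodesics; the collar lemma then guarantees that the systole of the resulting punctured surface remains of order $\ln g$.

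\textbf{Upper bound.} Given $X \in \sM_{g,n}$, choose a systolic simple closed geodesic $\gamma$. Pinching $\gamma$ along its Fenchel--Nielsen length coordinate (keeping the other coordinates fixed) produces a path from $X$ to the stratum $\D_\gamma \subset \partial\overline{\sM}_{g,n}$. Wolpert's expansion of the \wep metric in Fenchel--Nielsen coordinates bounds the length of this path by $C\sqrt{\ell_\gamma(X)} = C\sqrt{\ssys(X)}$ for a universal constant $C$. Combined with the Mumford--Gromov area bound $\ssys(X) \leq 2\ln(4(2g-2+n)) + O(1)$, obtained from embedding a ball of radius $\ssys(X)/2$ in $X$ of area $2\pi(2g-2+n)$, we conclude $\dist_{wp}(X, \partial\overline{\sM}_{g,n}) \leq C'\sqrt{\ln g}$ uniformly in $X$.

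The most delicate ingredient is the lower bound in the punctured case: producing surfaces of signature $(g,n)$ with systole of order $\ln g$ when $n\geq 1$. A secondary technical issue is that Wolpert's sharp asymptotic $\sqrt{2\pi\ell_\gamma}$ for the \wep distance to $\D_\gamma$ is only valid as $\ell_\gamma \to 0$; to bound the pinching-path length uniformly in $\ssys(X)$ (which can be as large as $\sim \ln g$), one must directly integrate the \wep metric along the pinching path, which requires careful control of the metric's behavior across all of Teichm\"uller space and not merely near the compactification divisor.
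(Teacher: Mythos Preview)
Your overall strategy matches the paper's exactly: both bounds come from sandwiching $\dist_{wp}(X,\partial\overline{\sM}_{g,n})$ between universal multiples of $\sqrt{\lsys(X)}$, and then invoking known bounds on the maximal systole. Two points where your proposal diverges from the paper are worth flagging, one harmless and one substantive.

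For the upper bound, your concern is misplaced. Wolpert's estimate $\dist_{wp}(X,\sT_\alpha)\leq\sqrt{2\pi\,\ell_\alpha(X)}$ is not merely an asymptotic valid as $\ell_\alpha\to 0$; it is a global inequality holding for every $X\in\Teich(S_{g,n})$ and every essential simple closed curve $\alpha$ (this is Theorem~\ref{d-stra}, from Wolpert's 2008 paper). So no careful path-length integration across all of \tec space is needed: one simply combines this with the systole bound $\lsys(X)\leq 4\arccosh(3(g+1))$ and reads off $\InR(\sM_{g,n})\leq\sqrt{32\pi}\sqrt{\ln g}$.

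For the lower bound with $n\geq 1$, your puncturing argument as stated has a real gap --- and it is not the one you anticipated. Placing all $n$ punctures inside a single small embedded disk creates \emph{new} essential simple closed curves on $S_{g,n}$: any curve enclosing two or more of the punctures is essential on the punctured surface even though it was null-homotopic on $S_g$. Nothing in your sketch (in particular no collar-lemma argument) prevents the geodesic representatives of these curves from being short; Schwarz--Pick type comparisons only control curves that were already essential on the closed surface. The paper sidesteps this by citing Balacheff--Makover--Parlier, who prove
\[
sys(g,n)\ \geq\ \min\Bigl\{U\ln g,\ 2\arccosh\bigl(\tfrac{2(g-1)}{n}+1\bigr)\Bigr\},
\]
constructing punctured surfaces whose punctures are sufficiently spread out precisely so that these new curves remain long; for fixed $n$ the right-hand side is of order $\ln g$. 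You should invoke this result rather than the small-disk puncturing.
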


We will show that as $g\to \infty$, the inradius $\InR(\sM_{g,n})$ is roughly realized by the family of surfaces constructed by Balacheff-Makover-Parlier in \cite{BMP14} (based on the work of Buser-Sarnak \cite{BS94}), whose injectivity radii grow roughly as $\ln{g}$. We remark here that the method used in the proof of Theorem \ref{mt-g} also shows that $\InR (\sM_{g,[g^a]})\asymp_g \sqrt{\ln{g}}$ for all $a \in (0,1)$. One can see Remark \ref{n(g)} for more details.\\

Our second result is
\begin{theorem}\label{mt-n}
For all $g\geq 0$ and $n\geq 4$, we have 
\[\InR (\sM_{g,n})\asymp_n 1.\]
\end{theorem}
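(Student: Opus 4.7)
The plan is to establish matching lower and upper bounds on $\InR(\sM_{g,n})$, both with constants independent of $n$.

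\emph{Lower bound.} For every $g \geq 0$ and $n \geq 4$, I would exhibit a point $X_0 \in \sM_{g,n}$ whose systole is bounded below by a universal constant $c_0 > 0$, for instance by taking Fenchel--Nielsen coordinates on some fixed pants decomposition of $S_{g,n}$ with all length parameters equal to $1$ and all twist parameters zero (Mumford compactness already guarantees nonemptiness of the relevant thick locus). The uniform Lipschitz inequality for $\sqrt{\ssys}$ along \wep geodesics stated earlier in the paper then gives
\[\sqrt{\ssys(X_0)} \;\leq\; C\cdot\dist_{wp}(X_0,\, Y)\]
for every $Y \in \partial \overline{\sM}_{g,n}$, since $\ssys$ vanishes on every boundary stratum. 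Taking the infimum over $Y$ yields $\InR(\sM_{g,n}) \geq \dist_{wp}(X_0, \partial\overline{\sM}_{g,n}) \geq \sqrt{c_0}/C$, uniformly in $n$.

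\emph{Upper bound.} The plan is to show that every $X \in \sM_{g,n}$ lies within bounded \wep distance of $\partial \overline{\sM}_{g,n}$. The key step is producing, for each $X$ with $n \geq 4$, a simple closed geodesic $\alpha \subset X$ with $\ell_\alpha(X) \leq L(g)$ for some $L(g)$ depending only on $g$. Given such an $\alpha$, a Wolpert-type plumbing-coordinate estimate for the \wep gradient flow of $\sqrt{\ell_\alpha}$ gives
\[\dist_{wp}\bigl(X,\; \partial \overline{\sM}_{g,n}\bigr) \;\leq\; C'\sqrt{\ell_\alpha(X)} \;\leq\; C'\sqrt{L(g)},\]
which is the desired $n$-independent bound. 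To produce such an $\alpha$, I would use the hypothesis $n \geq 4$ in the following way: around each of the $n$ cusps of $X$ sits a canonical disjoint embedded horocyclic neighborhood of universally bounded area, and a pigeonhole / covering argument based on this boundedness forces two of the cusps to be within bounded hyperbolic distance on $X$. The shortest simple closed geodesic enclosing this pair of cusps (and separating them from the remaining $n-2$ cusps and the genus handles) then has $\ell_\alpha(X) \leq L(g)$.

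\emph{Main obstacle.} The technical heart is precisely this production of the short curve $\alpha$ with an $n$-independent length bound $L(g)$ out of the cusp geometry: one has to exploit $n \geq 4$ to beat the possible growth of other geometric quantities with $n$, and the bound $L(g)$ must be made explicit enough that the ensuing Wolpert estimate is genuinely uniform. Once this short curve is in place, the lower bound is a direct consequence of the earlier Lipschitz theorem for $\sqrt{\ssys}$, and the upper bound follows from classical Wolpert plumbing geometry.
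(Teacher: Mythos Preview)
Your overall architecture matches the paper's: the lower bound via the Lipschitz estimate for $\sqrt{\lsys}$ applied to a thick point, and the upper bound via a short curve plus Wolpert's distance-to-stratum estimate $\dist_{wp}(X,\sT_\alpha)\leq\sqrt{2\pi\,\ell_\alpha(X)}$. The difference lies in how the short curve is produced. The paper does not argue geometrically with cusp neighborhoods; it simply invokes Schmutz's bound $sys(g,n)\leq 4\arccosh\bigl((6g-6+3n)/n\bigr)$, which for fixed $g$ is uniformly bounded as $n\to\infty$, so the \emph{systolic} curve itself already has length $\leq d(g)$. Your pigeonhole heuristic (``two cusps within bounded hyperbolic distance'') is the right intuition behind such systole bounds, but as literally stated it is imprecise---cusps sit at infinity and the complement of the standard area-$2$ horoball neighborhoods still has area growing linearly in $n$, so a naive covering argument does not immediately force proximity; carrying this out rigorously essentially reproves Schmutz's theorem, and you may as well cite it. For the lower bound, the paper uses the universal fact $sys(g,n)\geq 2\arcsinh 1$ (from the Collar Lemma applied to a systole-maximizing surface) in place of your explicit Fenchel--Nielsen construction, but either yields a thick point and the rest is identical; the paper also records a second proof of the lower bound using only Wolpert's gradient bound for short curves (Lemma~\ref{short-1}) rather than the full Lipschitz theorem.
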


We remark that the method used in the proof of Theorem \ref{mt-n} also gives that $\InR (\sM_{[n^a],n})\asymp_n 1$ for all $a \in (0,1)$. One can see Remark \ref{g(n)} for more details. We will give two different proofs for the lower bound in Theorem \ref{mt-n}, one of which is by applying Theorem \ref{mt-lip}.
 
The difficult parts for Theorem \ref{mt-g} and \ref{mt-n} are the lower bounds, which rely on studying the systole function along \wep geodesics. \\

For any $X\in \Teich(S_{g,n})$, we refer to the length of a shortest essential simple closed geodesic in X as the \emph{systole} of X and denote it by $\ell_{sys}(X)$. The systole function $\lsys(\cdot):\Teich(S_{g,n})\to \mathbb{R}^+$ is continuous, but not smooth as corners appear when it is realized by multiple essential isotopy classes of simple closed curves. However, it is a topological Morse function and its critical points can be characterized. One may refer to \cite{Akr03, Gen15, Sch93} for more details. The lower bounds in Theorems \ref{mt-g} and \ref{mt-n} will be established by using the following theorem, which gives a uniform lower bound for the \wep distance in terms of systole functions. 
\begin{theorem}\label{mt-lip}
There exists a universal constant $K>0$, independent of $g$ and $n$, such that for all $X,Y\in \Teich(S_{g,n})$,
\[|\sqrt{\ell_{sys}(X)}-\sqrt{\ell_{sys}(Y)}|\leq K\dist_{wp}(X,Y).\]
\end{theorem}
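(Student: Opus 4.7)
The plan is to reduce the theorem to a uniform gradient bound for the square root of individual geodesic-length functions, and then use the stability of the Lipschitz property under pointwise infima.

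First, I would invoke Wolpert's gradient estimate for geodesic-length functions: for every essential simple closed curve $\alpha$ on $S_{g,n}$, the smooth function $\sqrt{\ell_\alpha}:\Teich(S_{g,n})\to \mathbb{R}^+$ has Weil-Petersson gradient of uniformly bounded norm,
\[
\|\nabla_{wp}\sqrt{\ell_\alpha}\|_{wp}^2\leq K^2,
\]
for a universal constant $K>0$ (independent of $\alpha$, of the marking, and of $g,n$). This follows from Wolpert's formula expressing $\nabla_{wp}\ell_\alpha$ via a harmonic Beltrami differential supported on the collar of $\alpha$ and the resulting pointwise estimate $\|\nabla_{wp}\ell_\alpha\|_{wp}^2\leq\tfrac{2}{\pi}\ell_\alpha$ (so one can take $K^2=\tfrac{1}{2\pi}$). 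Integrating $\nabla_{wp}\sqrt{\ell_\alpha}$ along any smooth path from $X$ to $Y$ and taking the infimum over such paths immediately yields
\[
\bigl|\sqrt{\ell_\alpha(X)}-\sqrt{\ell_\alpha(Y)}\bigr|\leq K\,\dist_{wp}(X,Y)
\]
for every essential simple closed curve $\alpha$, with the \emph{same} constant $K$.

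Next, I would pass from individual curves to the systole by the standard fact that a pointwise infimum of a family of uniformly $K$-Lipschitz functions is itself $K$-Lipschitz. Explicitly, for each fixed $\alpha$,
\[
\sqrt{\ell_{sys}(X)}=\inf_{\beta}\sqrt{\ell_\beta(X)}\leq \sqrt{\ell_\alpha(X)}\leq \sqrt{\ell_\alpha(Y)}+K\,\dist_{wp}(X,Y),
\]
and taking the infimum over essential simple closed curves $\alpha$ on the right gives $\sqrt{\ell_{sys}(X)}\leq \sqrt{\ell_{sys}(Y)}+K\,\dist_{wp}(X,Y)$; the reverse inequality follows by symmetry. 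Note that for each $X$ only finitely many simple closed geodesics have length below any given threshold, so the infimum is realized and there is no issue with the family being infinite.

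The main conceptual obstacle is that $\ell_{sys}$ is not smooth (corners appear where several curves simultaneously realize the minimum), so one cannot directly write down $\nabla_{wp}\sqrt{\ell_{sys}}$ and integrate. The Lipschitz-of-infimum trick sidesteps this difficulty by never differentiating $\sqrt{\ell_{sys}}$ itself. The only real ingredient to check carefully is therefore the uniform Wolpert bound; once that is in hand, the rest is a short formal argument. A minor technical point is that Weil-Petersson geodesics may be incomplete and exit to $\partial\overline{\sM}_{g,n}$, but this is irrelevant since we only need the infimum-of-path-length definition of $\dist_{wp}$, which together with the gradient bound already forces each $\sqrt{\ell_\alpha}$ to be $K$-Lipschitz on the open Teichm\"uller space.
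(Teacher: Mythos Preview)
Your argument has a genuine gap: the claimed gradient bound $\|\nabla_{wp}\ell_\alpha\|_{wp}^2\leq\frac{2}{\pi}\ell_\alpha$ is false, and in fact the inequality goes the other way. Riera's formula (Equation~(\ref{Rie-f}) in the paper) gives
\[
\langle\grad\ell_\alpha,\grad\ell_\alpha\rangle_{wp}=\frac{2}{\pi}\Bigl(\ell_\alpha+\sum_{E\in\{\langle A\rangle\backslash\Gamma/\langle A\rangle-\mathrm{id}\}}\bigl(u\ln\tfrac{u+1}{u-1}-2\bigr)\Bigr),
\]
and every term in the sum is strictly positive, so $\|\nabla_{wp}\ell_\alpha\|_{wp}^2\geq\frac{2}{\pi}\ell_\alpha$ always. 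Wolpert's actual upper bound (Lemma~\ref{short}) is $\|\nabla_{wp}\ell_\alpha\|_{wp}^2\leq c\bigl(\ell_\alpha+\ell_\alpha^2 e^{\ell_\alpha/2}\bigr)$, which only controls $\|\nabla\sqrt{\ell_\alpha}\|$ when $\ell_\alpha$ is bounded. For a long simple closed geodesic the double-coset sum can dominate, and there is no universal bound on $\|\nabla\sqrt{\ell_\alpha}\|$ valid for all curves $\alpha$. Since Buser--Sarnak surfaces have systoles of order $\ln g$, even restricting to systolic curves does not make this automatic.

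This is exactly why the paper cannot run the infimum-of-Lipschitz argument you propose and instead works much harder. The main technical content is Proposition~\ref{gl-qi}: for a \emph{systolic} curve $\alpha$ on an $\epsilon_0$-thick surface, the double-coset sum in Riera's formula is bounded by $D(\epsilon_0)\ell_\alpha$. The proof uses that lifts of a systolic curve are uniformly separated (Lemma~\ref{3-l-1}) together with a mean-value estimate to control the sum by an integral over an annulus. Because this bound is only available for systolic curves, the paper must then show (Lemmas~\ref{key-l} and~\ref{key-l-2}) that along any Weil--Petersson geodesic the systole is realized piecewise by a finite sequence of curves, so that one can integrate the gradient of $\sqrt{\ell_{\alpha_i}}$ on each subinterval where $\alpha_i$ is systolic. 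Your infimum trick is the natural first attempt, but the failure of the uniform gradient bound for arbitrary curves is precisely the obstacle the paper is built to overcome.
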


To the best of our knowledge, Theorem 1.3 is the first study of the systole function along Weil-Petersson geodesics, addressing a line of inquiry that Wolpert raised in \cite[page 274]{Wolpert-per}: \textsl{determine the behaviors of the systole function along \wep geodesics}. For the limits of relative systolic curves along a \wep geodesic ray in Thurston's projective measured lamination space, one may see \cite{BMM10, BMM11, BM15, Hame15} for more details. 

The strategy for establishing Theorem \ref{mt-lip} is to bound the \wep norm of the gradient $\grad \ell_{\alpha}^{\frac{1}{2}}(X)$ from above by a universal constant, independent of $g$ and $n$, when $\alpha$ is an essential simple closed curve in $X$ which realizes the systole of $X$. In order to do this, first by applying the real analyticity of the \wep metric \cite{Ahlfors61} and the convexity of geodesic length function along \wep geodesics \cite{Wolpert87, Wolf12}, we make a thin-thick decomposition for the \wep geodesic $\g(X,Y)\subset \Teich(S_{g,n})$ connecting $X$ and $Y$ such that we can differentiate $\lsys(\cdot)$ along the geodesic $\g(X,Y)$ in some sense (see Lemma \ref{key-l-2} in Section \ref{one-l}). Then, for the thin part of $\g(X,Y)$ we use a result, due to Wolpert in \cite{Wolpert08} (see Lemma 3.16 in \cite{Wolpert08} or Lemma \ref{short} in Section \ref{sys-wg}), to get a uniform upper bound for the \wep norm of the gradient $\grad \ell_{\alpha}^{\frac{1}{2}}(X)$. For the thick part of $\g(X,Y)$ (here the injectivity radius of some hyperbolic surface, which is a point on $\g(X,Y)$, could be arbitrarily large \cite{BS94}), we apply a special case of a formula of Riera \cite{Rie05} (see Equation (\ref{Rie-f}) in Section \ref{sys-wg}) and some two-dimensional hyperbolic geometry theory to provide a uniform upper bound for the \wep norm of the gradient $\grad \ell_{\alpha}^{\frac{1}{2}}(X)$, where $\alpha$ realizes the systole of $X$ (see Proposition \ref{gl-qi} in Section \ref{sys-wg}). The step for the thick part almost takes up the entirety of Section \ref{sys-wg}. Then, Theorem \ref{mt-lip} follows by integrating along the \wep geodesic segment and the Cauchy-Schwartz inequality. See Section \ref{sys-wg} for more details.

For any $\epsilon>0$, let $\sM^{\geq \epsilon}_{g,n}$ be the $\epsilon$-thick part of moduli space. The Mumford compactness theorem tells that $\sM^{\geq \epsilon}_{g,n}$ is compact. Denote by $\partial \sM^{\geq \epsilon}_{g,n}$ the boundary of $\sM^{\geq \epsilon}_{g,n}$, which consists of $\epsilon$-thick surfaces whose injectivity radii are $\epsilon$. It is clear that moduli space $\sM_{g,n}$ is foliated by $\partial \sM^{\geq \epsilon}_{g,n}$ for all $s>0$. The following result bounds the \wep distance between two leaves.
\begin{theorem}\label{leaf}
There exists a universal constant $K'>0$, independent of $g$ and $n$, such that for any $s>t\geq 0$, 
\[\frac{\sqrt{s}-\sqrt{t}}{K'} \leq \dist_{wp}(\partial \sM^{\geq s}_{g,n},\partial \sM^{\geq t}_{g,n})\leq K'(\sqrt{s}-\sqrt{t}).\]
\end{theorem}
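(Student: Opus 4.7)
The plan is to derive the lower bound directly from Theorem~\ref{mt-lip} and to realize the upper bound by flowing along the gradient of $-\sqrt{\ell_\alpha}$ for a systole-realizing curve $\alpha$.

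For the lower bound, observe that any $X\in\partial\sM^{\geq s}_{g,n}$ satisfies $\ell_{sys}(X)=2s$ (using the standard convention that the injectivity radius equals half the systole), and likewise $\ell_{sys}(Y)=2t$ for $Y\in\partial\sM^{\geq t}_{g,n}$. Since $\ell_{sys}$ is $\Mod(S_{g,n})$-invariant we may pass to Teichm\"uller lifts realizing $\dist_{wp}(X,Y)$ up to arbitrarily small error $\varepsilon$, and Theorem~\ref{mt-lip} then gives $|\sqrt{2s}-\sqrt{2t}|\leq K\dist_{wp}(X,Y)+K\varepsilon$. Letting $\varepsilon\to 0$ and taking the infimum over boundary pairs yields the lower bound with $K'\geq K/\sqrt{2}$.

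For the upper bound, fix $X\in\partial\sM^{\geq s}_{g,n}$ (the statement is vacuous if this set is empty), a Teichm\"uller lift $\tilde X$, and a simple closed geodesic $\alpha$ realizing $\ell_{sys}(\tilde X)=2s$. The key analytic input is the universal pointwise lower bound
\[
  \|\grad\sqrt{\ell_\alpha}\|_{wp}^2 \;\geq\; \frac{1}{2\pi},
\]
valid for any simple closed geodesic on any point of $\Teich(S_{g,n})$; this follows at once from Riera's formula (\ref{Rie-f}), which expresses $\|\grad\ell_\alpha\|_{wp}^2$ as a manifestly non-negative series with leading term $\frac{2}{\pi}\ell_\alpha$. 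Consider the integral curve $\phi\colon[0,T^\ast)\to\Teich(S_{g,n})$ of $-\grad\sqrt{\ell_\alpha}$ starting at $\tilde X$. The bound above forces $\sqrt{\ell_\alpha}\circ\phi$ to strictly decrease at rate at least $1/(2\pi)$, so the flow persists long enough to pinch $\ell_\alpha$ to any positive target value.

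By continuity of $\ell_{sys}$ on Teichm\"uller space and the inequality $\ell_{sys}\leq\ell_\alpha$, there is a first time $\tau^\ast$ with $\ell_{sys}(\phi(\tau^\ast))=2t$; at this time $\ell_\alpha(\phi(\tau^\ast))\geq 2t$, so $v(\tau^\ast):=\sqrt{\ell_\alpha(\phi(\tau^\ast))}\geq\sqrt{2t}$, and the image of $\phi(\tau^\ast)$ in moduli space lies in $\partial\sM^{\geq t}_{g,n}$. Reparametrizing by $v:=\sqrt{\ell_\alpha}\circ\phi$ (along which $dv/d\tau=-\|\grad\sqrt{\ell_\alpha}\|_{wp}^2$), the WP length of $\phi|_{[0,\tau^\ast]}$ becomes
\[
  \int_0^{\tau^\ast}\|\dot\phi\|_{wp}\,d\tau
  \;=\;\int_{v(\tau^\ast)}^{\sqrt{2s}}\frac{dv}{\|\grad\sqrt{\ell_\alpha}\|_{wp}}
  \;\leq\;\sqrt{2\pi}\bigl(\sqrt{2s}-\sqrt{2t}\bigr),
\]
which, upon projection to moduli space, yields the upper bound with $K'\geq 2\sqrt{\pi}$. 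The principal obstacle I anticipate is that the gradient flow may lose $\alpha$ as the systolic curve before $\ell_\alpha$ itself descends to $2t$ (some other simple closed geodesic on $\phi(\tau)$ could overtake $\alpha$), but this is precisely why the stopping rule is defined via $\ell_{sys}$ rather than $\ell_\alpha$; the monotonic inequality $\ell_{sys}\leq\ell_\alpha$ then guarantees $v(\tau^\ast)\geq\sqrt{2t}$ and hence the bound $v(0)-v(\tau^\ast)\leq\sqrt{2s}-\sqrt{2t}$ on the total $v$-displacement.
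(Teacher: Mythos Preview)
Your proof is correct and follows essentially the same strategy as the paper: the lower bound via Theorem~\ref{mt-lip} applied to points on the two leaves, and the upper bound via the gradient flow of $-\sqrt{\ell_\alpha}$ combined with Riera's lower bound $\|\grad\sqrt{\ell_\alpha}\|_{wp}^2\geq 1/(2\pi)$. The only cosmetic differences are your factor-of-$2$ normalization (the paper in fact defines $\sM^{\geq s}_{g,n}$ by $\ell_{sys}\geq s$, not by injectivity radius, so the $2$ is unnecessary but harmless) and your stopping rule at $\ell_{sys}=2t$ rather than the paper's choice of flowing until $\ell_\alpha=t$ and then invoking that the WP geodesic to $X$ must cross the leaf; both variants yield the same constant $\sqrt{2\pi}$.
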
\

As stated above, the asymptotic behavior of the \wep volume of $\sM_{g,0}$ has been well studied as $g$ tends to infinity. We are grateful to Maryam Mirzakhani for bringing the following interesting question to our attention.
\begin{question}\label{q-mm}
Fix a constant $R>0$, are there any good upper bounds for the \wep volume $\Vol_{wp}(B(X;R))$ as $g$ tends to infinity? Here $B(X;R)=\{Y\in \Teich(S_{g,0}); \ \dist_{wp}(Y,X)<R\}$ is the \wep geodesic ball of radius $R$ centered at $X$.
\end{question}

The last part of this paper is to study Question \ref{q-mm}. Let $S_g=S_{g,0}$ be the closed surface of genus $g$ and $\Teich(S_g)$ be \tec space endowed with the \wep metric. Since the completion $\overline{\Teich(S_g)}$ of $\Teich(S_g)$ is not locally compact \cite{Wolpert03}, it is well-known that the \wep volume of a geodesic ball of finite radius  blows up if this ball in $\overline{\Teich(S_g)}$ contains a boundary point (see Proposition \ref{blow-up} for more details). Thus, we need to assume that the \wep geodesic balls in Question \ref{q-mm} stay away from the boundary of $\overline{\Teich(S_g)}$. For any positive constant $r_0$, we define
\begin{eqnarray*}
\sU(\Teich(S_{g}))^{\geq r_0}:=\{X_g \in \Teich(S_g); \dist_{wp}(X_g; \partial \overline{\Teich(S_g)})\geq r_0\}
\end{eqnarray*}
where $\partial \overline{\Teich(S_g)}$ is the boundary of $\Teich(S_g)$. The space $\sU(\Teich(S_{g}))^{\geq r_0}$ is the subset in $\Teich(S_g)$ which is at least $r_0$-distance to the boundary. By applying Theorem \ref{mt-g} and Teo's \cite{Teo09} uniform lower bound for the Ricci curvature on the thick part of $\Teich(S_g)$, we will show that the \wep volume of any \wep geodesic ball in $\sU(\Teich(S_{g}))^{\geq r_0}$ rapidly decays to $0$ as $g$ tends to infinity. More precisely,
\begin{theorem}\label{decay-0}
For any $r_0>0$, then for any constant $\epsilon>0$ we have
\[\sup_{B(X_g;r_g)\subset \sU(\Teich(S_{g}))^{\geq r_0}} \Vol_{wp}(B(X_g;r_g))=o((\frac{1}{g})^{(3-\epsilon)g})\]
where the supremum is taken over all the geodesic balls in $\sU(\Teich(S_{g}))^{\geq r_0}$ and $B(X_g;r_g):=\{Y_g\in \Teich(S_g); \ \dist_{wp}(Y_g,X_g)<r_g\}$. 
\end{theorem}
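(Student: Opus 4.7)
The plan is to combine the inradius bound from Theorem~\ref{mt-g} and the leaf-distance estimate of Theorem~\ref{leaf} with Teo's uniform Ricci lower bound on the thick part of $\Teich(S_g)$, and then apply the Bishop-Gromov volume comparison theorem. The subtle point will be that although the dimension $n=6g-6$ is enormous, both the radius $r_g$ and the comparison curvature will tend to $0$ with $g$, and the resulting cancellations must be tracked precisely.

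First I would extract two consequences of the inclusion $B(X_g;r_g)\subset \sU(\Teich(S_g))^{\geq r_0}$. On the one hand, every point $Y$ in the ball satisfies $\dist_{wp}(Y,\partial\overline{\Teich(S_g)})\geq r_0$, and the upper bound in Theorem~\ref{leaf} (applied with $s=\ell_{sys}(Y)$ and $t=0$) shows $\sqrt{\ell_{sys}(Y)}\geq r_0/K'$; hence the whole ball lies in the $\epsilon_0$-thick part of $\Teich(S_g)$ for $\epsilon_0:=(r_0/K')^2>0$. On the other hand, since the \wep metric is $\Mod(S_g)$-invariant, the distance from $X_g$ to $\partial\overline{\Teich(S_g)}$ equals the distance from $[X_g]$ to $\partial\overline{\sM}_g$; the ball inclusion forces this to be at least $r_g+r_0$, and Theorem~\ref{mt-g} then produces a constant $C_1>0$ independent of $g$ with $r_g\leq C_1\sqrt{\ln g}$ for $g$ large.

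Next I would invoke Teo's theorem \cite{Teo09} to obtain a Ricci lower bound $\Ric\geq -C(\epsilon_0)$ valid on the $\epsilon_0$-thick part with $C(\epsilon_0)$ independent of the genus. Since no \wep geodesic from $X_g$ of length less than $r_g$ can leave the thick region, the Bishop-Gromov comparison applies locally on $B(X_g;r_g)$ and gives
\[\Vol_{wp}(B(X_g;r_g))\leq \omega_{n-1}\int_0^{r_g}\left(\frac{\sinh(\sqrt{K_g}\,t)}{\sqrt{K_g}}\right)^{n-1}dt,\]
with $n=6g-6$, $K_g=C(\epsilon_0)/(n-1)$, and $\omega_{n-1}$ the volume of the round $(n-1)$-sphere. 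Because $\sqrt{K_g}\,r_g = O\!\left(\sqrt{\ln g/g}\right)\to 0$, the ratio $\sinh(\sqrt{K_g}t)/(\sqrt{K_g}t)$ is $1+o(1)$ uniformly for $t\in[0,r_g]$, so the estimate collapses to $(1+o(1))\,\omega_{n-1}\,r_g^n/n$.

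Finally I would apply Stirling's formula in the form $\omega_{n-1}/n = \pi^{n/2}/\Gamma(n/2+1) \sim (2\pi e/n)^{n/2}/\sqrt{\pi n}$ and substitute $n=6g-6$ together with $r_g^2\leq C_1^2\ln g$; this reduces the bound to $(C_2\ln g/g)^{3g-3}$ up to subexponential factors. The claim is then immediate, since
\[\log\!\left((C_2\ln g/g)^{3g-3}\cdot g^{(3-\epsilon)g}\right) = (3g-3)\log\log g + O(g) - \epsilon g\log g \to -\infty\]
for every $\epsilon>0$, giving $\Vol_{wp}(B(X_g;r_g))=o\!\left((1/g)^{(3-\epsilon)g}\right)$. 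The main technical worry is not the asymptotic computation but justifying Bishop-Gromov despite the \wep incompleteness of $\Teich(S_g)$; this is handled precisely because the hypothesis confines the entire ball to Teo's thick part, so the exponential map from $X_g$ is well defined on the full Euclidean $r_g$-ball in $T_{X_g}\Teich(S_g)$, which suffices for the comparison argument.
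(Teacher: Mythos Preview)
Your proposal is correct and follows essentially the same route as the paper: both arguments (i) place the ball in a uniform thick part, (ii) invoke Teo's Ricci lower bound there, (iii) apply Bishop--Gromov volume comparison, (iv) bound the radius by $C\sqrt{\ln g}$ via Theorem~\ref{mt-g}, and (v) finish with Stirling's formula and the observation that $(\ln g/g)^{3g}$ beats $(1/g)^{(3-\epsilon)g}$.

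One small quibble: for step~(i) you cite the upper bound of Theorem~\ref{leaf}, but that statement only bounds the \emph{infimum} $\dist_{wp}(\partial\sM^{\geq s},\partial\sM^{\geq 0})$, not the distance from an arbitrary $Y$ with $\ell_{sys}(Y)=s$ to the boundary. The pointwise bound $\dist_{wp}(Y,\partial\overline{\Teich(S_g)})\leq\sqrt{2\pi\,\ell_{sys}(Y)}$ you need is exactly Wolpert's Theorem~\ref{d-stra}, which is what the paper uses (via Lemma~\ref{6-1-thick}); it is also implicit in the proof of Theorem~\ref{leaf} and stated in the remark following it. This is a citation issue, not a mathematical gap. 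Your explicit remark that the ball hypothesis keeps all geodesics from $X_g$ inside the thick part, so that Bishop--Gromov is legitimate despite the global incompleteness of $\Teich(S_g)$, is a point the paper leaves tacit.
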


\begin{remark}
 From Theorem \ref{mt-g} and Wolpert's upper bound for distance to strata (see Theorem \ref{d-stra}), the largest radius of \wep geodesic balls in $\sU(\Teich(S_{g}))^{\geq r_0}$ is comparable to $\sqrt{\ln{g}}$ as $g\to \infty$. In particular, Theorem \ref{decay-0} implies that for any constant $a \in (0,\frac{1}{2})$,
\[\lim\limits_{g\to \infty}  \displaystyle\inf_{X_g\in \Teich(S_g)}\vol_{wp}(B(X_g;(\ln{g})^{a}))=0.\]
\end{remark}

A direct consequence of Theorem \ref{decay-0} is the following result. 
\begin{corollary}\label{R-0}
Fix a constant $R>0$. Then there exists a constant $\epsilon(R)>0$, only depending on $R$, such that for any $\epsilon>0$,
\[\sup_{X_g\subset \sU(\Teich(S_{g}))^{\geq \epsilon(R)}} \Vol_{wp}(B(X_g;R))=o((\frac{1}{g})^{(3-\epsilon)g}).\]
In particular, $\displaystyle \lim_{g\to \infty}\displaystyle\sup_{X_g\subset \sU(\Teich(S_{g}))^{\geq \epsilon(R)}} \Vol(B(X_g;R))=0$.
\end{corollary}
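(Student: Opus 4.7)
The plan is to deduce Corollary \ref{R-0} directly from Theorem \ref{decay-0} by choosing $\epsilon(R)$ large enough that every Weil-Petersson geodesic ball of radius $R$ centered in $\sU(\Teich(S_g))^{\geq \epsilon(R)}$ is itself contained in some fixed uniform thickening $\sU(\Teich(S_g))^{\geq r_0}$ of the interior. Concretely, I would fix the auxiliary constant $r_0:=1$ once and for all and set $\epsilon(R):=R+1$, which clearly depends only on $R$.

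First, I would verify the containment by the triangle inequality. For any $X_g\in \sU(\Teich(S_g))^{\geq R+1}$ and any $Y_g\in B(X_g;R)$,
\[
\dist_{wp}(Y_g,\partial\overline{\Teich(S_g)}) \;\geq\; \dist_{wp}(X_g,\partial\overline{\Teich(S_g)})-\dist_{wp}(X_g,Y_g) \;>\; (R+1)-R \;=\; 1,
\]
so $B(X_g;R)\subset \sU(\Teich(S_g))^{\geq 1}$.

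Next, I would invoke Theorem \ref{decay-0} with $r_0=1$. Since each ball $B(X_g;R)$ arising from an $X_g\in \sU(\Teich(S_g))^{\geq R+1}$ is a Weil-Petersson geodesic ball contained in $\sU(\Teich(S_g))^{\geq 1}$, taking the supremum only over these particular balls (all of radius $R$) is bounded above by the supremum appearing in Theorem \ref{decay-0}; hence
\[
\sup_{X_g\in \sU(\Teich(S_g))^{\geq \epsilon(R)}} \Vol_{wp}(B(X_g;R)) \;=\; o\!\left(\left(\tfrac{1}{g}\right)^{(3-\epsilon)g}\right)
\]
for every $\epsilon>0$. The ``in particular'' statement then follows immediately, since $(1/g)^{(3-\epsilon)g}\to 0$ as $g\to\infty$ for any $\epsilon\in(0,3)$.

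The main (and essentially only) subtlety is cosmetic: one must take a strictly positive slack $r_0>0$ in order to keep $B(X_g;R)$ genuinely in the interior, because the metric completion $\overline{\Teich(S_g)}$ fails to be locally compact and geodesic balls that touch the boundary have infinite Weil-Petersson volume (as recalled in the paragraph preceding Theorem \ref{decay-0}). Apart from this purely formal point, no new idea is required beyond what Theorem \ref{decay-0} already supplies.
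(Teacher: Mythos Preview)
Your proposal is correct and follows essentially the same approach as the paper: both set $r_0=1$, choose $\epsilon(R)=R+1$, use the triangle inequality to show $B(X_g;R)\subset \sU(\Teich(S_g))^{\geq 1}$, and then invoke Theorem~\ref{decay-0}. Your write-up is slightly more explicit about why the supremum over the restricted family of balls is dominated by the one in Theorem~\ref{decay-0}, but otherwise the arguments are identical.
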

The corollary above answers Question \ref{q-mm} at least following a certain
interpretation.\\

\noindent \textbf{Plan of the paper.} Section \ref{np} provides some necessary background and the basic properties on two-dimensional hyperbolic geometry and the Weil-Petersson metric.  In Section \ref{one-l} we will show that the systole function is piecewise real analytic along \wep geodesics, which will be applied to prove Theorem \ref{mt-lip}. We will prove  Theorem \ref{mt-lip} in Section \ref{sys-wg}. In Section \ref{g-n} we will prove Theorem \ref{leaf} and apply Theorem \ref{mt-lip} to prove Theorem \ref{mt-g} and \ref{mt-n}. In Section \ref{wp-v} we will establish Theorem \ref{decay-0} and Corollary \ref{R-0}. 


\section{Acknowledgements}
The author would like to thank Jeffrey Brock, Hugo Parlier and Michael Wolf for their interest and useful conversations. He also would like to thank Maryam Mirzakhani for helpful discussions concerning Section \ref{wp-v}. He especially would like to thank Scott Wolpert for invaluable discussions on the various aspects of this paper. Without these discussions, this paper would have been impossible to complete. Part of this work was completed while visiting the Chern Institute of Mathematics in June 2014, and while attending the special program entitled "Geometric Structures on 3-manifolds" at the Institute for Advanced Study in October 2015. The author would like to give thanks for their hospitality. Most of this work was finished when the author was a G. C. Evans Instructor at Rice University. He would like to thank the Department of Mathematics of Rice University for all of their support in the past several years.


\section{Notations and Preliminaries}\label{np} 
In this section we will set up the notations and provide some necessary background on two-dimensional hyperbolic geometry, \tec theory and the \wep metric.
\subsection{Hyperbolic upper half plane} Let $\mathbb{H}$ be the upper half plane endowed with the hyperbolic metric $\rho(z)|dz|^2$ where
$$\rho(z)=\frac{1}{(\im(z))^2}.$$

A geodesic line in $\mathbb{H}$ is either a vertical line or an upper semi-circle centered at some point on the real axis. For $z=(r,\theta) \in \mathbb{H}$ given in polar coordinate where $\theta \in (0,\pi)$, the hyperbolic distance between $z$ and the imaginary axis $\textbf{i}\mathbb{R^+}$ is
\begin{eqnarray}\label{i-dis}
\dist_{\mathbb{H}}(z, \textbf{i}\mathbb{R^+})=\ln|\csc{\theta}+|\cot{\theta}||.
\end{eqnarray}

Thus, 
\begin{eqnarray}\label{i-exp}
e^{-2\dist_{\mathbb{H}}(z, \textbf{i}\mathbb{R^+})}\leq \sin^2{\theta}=\frac{\Im^2(z)}{|z|^2}\leq 4e^{-2\dist_{\mathbb{H}}(z, \textbf{i}\mathbb{R^+})}.
\end{eqnarray}

It is known that any eigenfunction with positive eigenvalue of the hyperbolic Laplacian of $\mathbb{H}$ satisfies the mean value property \cite[Coro.1.3]{Fay77}. For $z=(r,\theta) \in \mathbb{H}$ given in polar coordinate, the function 
\[u(\theta)=1-\theta \cot{\theta}\] 
is a positive $2$-eigenfunction. Thus, $u$ satisfies the mean value property. It is not hard to see that $\min\{u(\theta), u(\pi-\theta)\}$ also satisfies the mean value property. Since $\min\{u(\theta), u(\pi-\theta)\}$ is comparable to $\sin^2{\theta}$, from inequality (\ref{i-exp}) we know that the function $e^{-2\dist_{\mathbb{H}}(z, \textbf{i}\mathbb{R^+})}$ satisfies the mean value property in $\mathbb{H}$. The following lemma is the simplest version of \cite[Lemma 2.4]{Wolpert08}.
\begin{lemma}\label{mvp}
For any $r>0$ and $p \in \mathbb{H}$, there exists a positive constant $c(r)$, only depending on $r$, such that
\begin{eqnarray*}
e^{-2\dist_{\mathbb{H}}(p, \textbf{i}\mathbb{R^+})}\leq c(r) \int_{B_{\mathbb{H}}(p;r)}e^{-2\dist_{\mathbb{H}}(z, \textbf{i}\mathbb{R^+})}dA(z)
\end{eqnarray*}
where $B_{\mathbb{H}}(p;r)=\{z\in \mathbb{H}; \dist_{\mathbb{H}}(p,z)< r\}$ is the hyperbolic geodesic ball of radius $r$ centered at $p$ and $dA(z)$ is the hyperbolic area element. 
\end{lemma}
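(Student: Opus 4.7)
The proof rests on the observation that the function $z \mapsto \dist_{\mathbb{H}}(z, \textbf{i}\mathbb{R}^+)$ is $1$-Lipschitz with respect to the hyperbolic metric, as is any distance function on a metric space. This fact alone, combined with the exponential decay built into the integrand, is enough for the lemma. I would proceed as follows.

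First, for every $z \in B_{\mathbb{H}}(p; r)$, the triangle inequality yields
\[\dist_{\mathbb{H}}(z, \textbf{i}\mathbb{R}^+) \;\leq\; \dist_{\mathbb{H}}(p, \textbf{i}\mathbb{R}^+) + r,\]
and exponentiating gives $e^{-2\dist_{\mathbb{H}}(z, \textbf{i}\mathbb{R}^+)} \geq e^{-2r}\, e^{-2\dist_{\mathbb{H}}(p, \textbf{i}\mathbb{R}^+)}$. Integrating this pointwise inequality over $B_{\mathbb{H}}(p; r)$ with respect to the hyperbolic area element, and using that the hyperbolic area of a disk of radius $r$ equals $V(r) = 2\pi(\cosh r - 1)$ and therefore depends only on $r$, one obtains
\[\int_{B_{\mathbb{H}}(p;r)} e^{-2\dist_{\mathbb{H}}(z, \textbf{i}\mathbb{R}^+)}\,dA(z) \;\geq\; V(r)\,e^{-2r}\,e^{-2\dist_{\mathbb{H}}(p, \textbf{i}\mathbb{R}^+)}.\]
Setting $c(r) := e^{2r}/V(r)$ then gives the stated inequality.

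A structural alternative is to stay within the mean-value framework set up in the preceding paragraph: the positive $2$-eigenfunction $u(\theta) = 1 - \theta\cot\theta$ satisfies Fay's mean value identity, the symmetric function $\min\{u(\theta), u(\pi-\theta)\}$ inherits the corresponding property, and this minimum is comparable (via (\ref{i-exp})) to $e^{-2\dist_{\mathbb{H}}(z, \textbf{i}\mathbb{R}^+)}$. Transferring the mean-value identity through the two-sided comparison yields the same conclusion with a possibly different constant, and this route is the one to prefer if similar estimates are needed downstream for genuine eigenfunctions.

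I do not expect a substantive obstacle: the $1$-Lipschitz observation reduces the lemma to elementary estimates, and the only real choice is between the short direct route and the more structural mean-value route. The one mild point to be careful about is simply that the constant $c(r) = e^{2r}/[2\pi(\cosh r - 1)]$ blows up both as $r \to 0^+$ and as $r \to \infty$, but this is consistent with the statement, which only demands finiteness for each fixed positive $r$.
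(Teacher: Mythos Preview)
Your direct argument via the $1$-Lipschitz property of $z\mapsto\dist_{\mathbb{H}}(z,\textbf{i}\mathbb{R}^+)$ is correct and complete; the constant $c(r)=e^{2r}/\bigl(2\pi(\cosh r-1)\bigr)$ is exactly right. The paper does not give a self-contained proof of this lemma: it states it as a special case of \cite[Lemma~2.4]{Wolpert08}, and the paragraph preceding the lemma sets up precisely your ``structural alternative'' --- Fay's mean-value identity for the $2$-eigenfunction $u(\theta)=1-\theta\cot\theta$, passed to $\min\{u(\theta),u(\pi-\theta)\}$, and then transferred through the comparison \eqref{i-exp} with $\sin^2\theta$.

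So your primary route is genuinely more elementary than the paper's: it avoids any appeal to eigenfunction theory and yields an explicit constant with one line of triangle inequality. The eigenfunction route, on the other hand, is what Wolpert's framework in \cite{Wolpert08} actually uses, and it is the natural setting when one needs analogous estimates for other $2$-eigenfunctions (e.g.\ the Beltrami differentials appearing in gradient formulas). For the purposes of this paper --- where the lemma is only invoked once, in the proof of Proposition~\ref{gl-qi}, to bound a sum of exponentials --- your direct argument is entirely sufficient and arguably preferable. Your closing remark about the blow-up of $c(r)$ at both ends is accurate and harmless, since the proof of Proposition~\ref{gl-qi} fixes $r=\epsilon_0/8$.
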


\subsection{\tec space}
Let $S_{g,n}$ be a surface of genus $g$ with $n$ punctures which satisfies that $3g-3+n>0$. Let $\textsl{M}_{-1}$ be the space of Riemannian metrics on $S_{g,n}$ with constant curvatures $-1$, and $X=(S_{g,n},\sigma|dz|^2) \in \textsl{M}_{-1}$. The group $\Diff_+$, which is the group of orientation-preserving diffeomorphisms, acts by pull back on $\textsl{M}_{-1}$. In particular this holds for the normal subgroup $\Diff_0$, the group of diffeomorphisms 
isotopic to the identity. The group $\Mod(S_{g,n}):=\Diff_+/\Diff_0$ is called the \textsl{mapping class group} of $S_{g,n}$. 

The \textsl{Teichm\"uller space} $\sT(S_{g,n})$ of $S_{g,n}$ is defined as
\begin{equation}
\nonumber \sT(S_{g,n}):=M_{-1}/\Diff_0.  
\end{equation}

The \textsl{moduli space} $\sM(S_{g,n})$ of $S_{g,n}$ is defined as
\begin{equation}
\nonumber \sM(S_{g,n}):=\sT(S_{g,n}) /\Mod(S_{g,n}).  
\end{equation}

The \tec space $\sT(S_{g,n})$ is a real analytic manifold. Let $\alpha$ be an essential simple closed curve on $S_{g,n}$, then for any $X \in \Teich(S_{g,n})$, there exists a unique closed geodesic $[\alpha]$ in $X$ which represents for $\alpha$ in the fundamental group of $S_{g,n}$. We denote by $\ell_{\alpha}(X)$ the length of $[\alpha]$ in $X$. In particular $\ell_{\alpha}(\cdot) $ defines a function on $\sT(S_{g,n})$. The following property is well-known.
\begin{lemma}\cite[Lemma 3.7]{IT92}\label{l-ana}
The geodesic length function $\ell_{\alpha}(\cdot):\sT(S_{g,n}) \to \mathbb{R}^+$ is real-analytic.
\end{lemma}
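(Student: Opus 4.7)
The plan is to express $\ell_\alpha$ as the composition of a trace function on the $\PSL(2,\R)$-character variety (evidently real-analytic) with the real-analytic function $\arccosh$. First, I would identify $\sT(S_{g,n})$ with the component of discrete faithful representations inside the character variety $\Hom(\pi_1(S_{g,n}),\PSL(2,\R))/\PSL(2,\R)$. Fixing a finite presentation of $\pi_1(S_{g,n})$, the representation variety is cut out by polynomial equations in the matrix entries of the generator images, so it carries a natural real-analytic (indeed semi-algebraic) structure, and the conjugation quotient is real-analytic on its smooth locus, which contains the Teichmüller component.

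Next I would invoke the translation-length--trace identity. For any $X\in \sT(S_{g,n})$ with holonomy $\rho_X$, the element $\rho_X(\alpha)$ is hyperbolic with translation length $\ell_\alpha(X)$, and, lifting locally to $\SL(2,\R)$,
$$\ell_\alpha(X)=2\,\arccosh\!\Big(\tfrac{|\tr\rho_X(\alpha)|}{2}\Big).$$
The trace is polynomial in the matrix entries of the generator images, so $\tr\rho_X(\alpha)$ is real-analytic in the character-variety coordinates; since $\rho_X(\alpha)$ is hyperbolic for every $X\in \sT(S_{g,n})$, the argument of $\arccosh$ stays strictly greater than $1$, a region in which $\arccosh$ is real-analytic. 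Composing the two establishes the lemma.

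The only real obstacle is not the analytic algebra itself but confirming that the character-variety real-analytic structure agrees with the intrinsic real-analytic structure on $\sT(S_{g,n})$ coming from, e.g., the Bers/Ahlfors--Weill complex structure or from Fenchel--Nielsen coordinates; this amounts to checking that the holonomy map is a real-analytic local diffeomorphism onto its image in the Teichmüller component. An alternative path that sidesteps this comparison is to work directly in Fenchel--Nielsen coordinates: the length coordinates $\ell_{\gamma_i}$ of a pants decomposition are real-analytic by construction, the twists $\tau_i$ complete a global real-analytic chart, and the length of an arbitrary simple closed curve can be written as an explicit real-analytic combination of traces via the Fricke trace identities in $\SL(2,\R)$. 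Either approach yields the conclusion; the character-variety route is cleaner, while the Fenchel--Nielsen route is more self-contained.
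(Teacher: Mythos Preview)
The paper does not give its own proof of this lemma: it simply records the statement as well-known and cites \cite[Lemma~3.7]{IT92}. So there is no argument in the paper to compare against.

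Your sketch is correct and is one of the standard routes. The trace formula $\ell_\alpha(X)=2\arccosh\bigl(\tfrac{|\tr\rho_X(\alpha)|}{2}\bigr)$ holds for hyperbolic elements, $|\tr|$ (or equivalently $\tr^2$, which is unambiguously defined on $\PSL(2,\R)$) is polynomial on the representation variety, and $\arccosh$ is real-analytic on $(1,\infty)$. You rightly flag the one genuine point that needs checking, namely that the holonomy map from $\sT(S_{g,n})$ with its intrinsic (Bers/Ahlfors--Weill) real-analytic structure to the Teichm\"uller component of the character variety is a real-analytic diffeomorphism; this is classical (and is essentially what the cited reference establishes), but it is not automatic. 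Your Fenchel--Nielsen alternative is equally valid and avoids that comparison at the cost of invoking the Fricke trace calculus. Either way the lemma follows.
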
 

Let $X\in \sT(S_{g,n})$ be a hyperbolic surface. The $\textsl{systole}$ of $X$ is the length of a shortest essential simple closed geodesic in $X$. We denote by $\ell_{sys}(X)$ the systole of $X$. It defines a continuous function $\lsys(\cdot): \sT(S_{g,n})\to \mathbb{R}^+$, which is called the \textsl{systole function}. In general, the systole function is clearly continuous and not smooth because of corners where there may exist multiple essential simple closed geodesics realizing the systole. This function is very useful in \tec theory. Curves that realize the systole are often referred to \emph{systolic curves}. One may refer to \cite{Akr03, Gen15, Sch93} for more details. In this paper we will study the behavior of this function along \wep geodesics and apply these results to different problems.

Fixed a constant $\epsilon_0>0$. The \textsl{$\epsilon_0$-thick part} of \tec space of $S_{g,n}$, denoted by $\sT(S_{g,n})^{\geq \epsilon_0}$, is defined as follows.
\[\sT(S_{g,n})^{\geq \epsilon_0}:=\{X\in \sT(S_{g,n}); \ \lsys(X)\geq \epsilon_0\}.\]
The space $\sT(S_{g,n})^{\geq \epsilon_0}$ is invariant by the mapping class group. The \textsl{$\epsilon_0$-thick part} of moduli space of $S_{g,n}$, denoted by $\sM(S_{g,n})^{\geq \epsilon_0}$, is defined by
\[\sM(S_{g,n})^{\geq \epsilon_0}:=\sT(S_{g,n})^{\geq \epsilon_0}/\Mod(S_{g,n}).\]

It is known that $\sM(S_{g,n})^{\geq \epsilon_0}$ is compact for all $\epsilon_0>0$, which is due to Mumford \cite{Mumford71}. For more details on \tec theory, one may refer to \cite{IT92, Hubbard06}.

\subsection{\wep metric}
The real-analytic space $\sT(S_{g,n})$ carries a natural complex structure.  Let $X=(S_{g,n},\sigma(z)|dz|^2) \in \sT_{g,n}$ be a point. The tangent space at $X$ is identified with the space of harmonic Beltrami differentials on $X$ which are forms of 
$\mu=\frac{\overline{\psi}}{\sigma}$ where $\psi$ is a holomorphic quadratic differential on $X$. Let $dA(z)=\sigma(z) dxdy$ be the volume form of $X=(S_{g,n},\sigma(z)|dz|^2)$ where $z=x+y \textbf{i}$. The \textit{Weil-Petersson metric} is the Hermitian
metric on $\sT(S_{g,n})$ arising from the the \textit{Petersson scalar  product}
\begin{equation}
 \langle \varphi,\psi \rangle_{WP}= \int_{X} \frac{\varphi (z)}{\sigma(z)}  \frac{\overline{\psi(z)}}{\sigma(z)} dA(z)\nonumber
\end{equation}
via duality. We will concern ourselves primarily with its Riemannian part $g_{WP}$. We denote by $\Teich(S_{g,n})$ the Teichm\"uller space endowed with the Weil-Petersson metric. The mapping class group $\Mod(S_{g,n})$ acts properly discontinuously on $\Teich(S_{g,n})$ by isometries. Reversely, from Masur-Wolf \cite{MW02} and Brock-Margalit \cite{BM07}  the whole isometry group of $\Teich(S_{g,n})$ is exactly the extended mapping class group except for some low complexity cases. The \wep metric on \tec space descends into a metric on moduli space. We denote by $\sM_{g,n}$ moduli space $\sM(S_{g,n})$ endowed with the \wep metric. 

The space $\Teich(S_{g,n})$ is incomplete \cite{Chu76, Wolpert75}, negatively curved \cite{Tromba86, Wolpert86} and uniquely geodesically convex \cite{Wolpert87}. The moduli space $\sM_{g,n}$ is an orbifold with finite volume and finite diameter. One may refer to \cite{IT92, Wolpertbook} for more details on the \wep metric. The following fundamental fact is due to Ahlfors \cite{Ahlfors61}, which will be used later.
\begin{theorem}[Ahlfors]\label{ahlf}
The space $\Teich(S_{g,n})$ is real-analytic K\"ahler.
\end{theorem}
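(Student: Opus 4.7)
The plan is to work in local holomorphic (Bers) coordinates near a basepoint $X_0=(S_{g,n},\sigma_0|dz|^2)\in\Teich(S_{g,n})$ and to read off both real-analyticity and the K\"ahler condition from an explicit formula for the metric coefficients. Fix a basis $\psi_1,\dots,\psi_N$ of the holomorphic quadratic differentials on $X_0$, with $N=3g-3+n$, and let $\mu_i=\bar\psi_i/\sigma_0$ be the dual basis of harmonic Beltrami differentials. For $t=(t_1,\dots,t_N)\in\mathbb{C}^N$ small, solving the Beltrami equation for $\mu(t)=\sum_i t_i\mu_i$ gives a new Riemann surface $X_t$ and hence local holomorphic coordinates on $\Teich(S_{g,n})$ centered at $X_0$. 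Extending to a frame $\{\psi_i(t)\}\subset H^0(X_t,K_{X_t}^{\otimes 2})$ depending holomorphically on $t$, the metric coefficients become
\[g_{i\bar j}(t)=\int_{X_t}\frac{\psi_i(t)\,\overline{\psi_j(t)}}{\sigma(t)}\,dx\,dy,\]
where $\sigma(t)|dz|^2$ is the hyperbolic metric on $X_t$.

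For real-analyticity in $t$, I would argue from two ingredients. First, the frame $\psi_i(t)$ can be chosen holomorphically in $t$ since the bundle of holomorphic quadratic differentials varies holomorphically over $\Teich(S_{g,n})$ (its rank is constant by Riemann--Roch). Second, after pulling back to $X_0$ via the normalized Beltrami solution, the hyperbolic density $\sigma(t)$ satisfies the Liouville equation $\Delta\log\sigma=2\sigma$, an elliptic PDE whose coefficients depend real-analytically on $t$; elliptic regularity with real-analytic parameter dependence yields real-analyticity of $\sigma(t)$. Plugging both into the integral formula then gives that $g_{i\bar j}(t)$ is real-analytic.

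For the K\"ahler condition, it suffices to verify the symmetry $\partial_{t_k}g_{i\bar j}(0)=\partial_{t_i}g_{k\bar j}(0)$, which is equivalent to closedness of the associated $(1,1)$-form. Differentiating the integral at $t=0$ produces three contributions: one from $\partial_{t_k}\psi_i(0)$, one from $\partial_{t_k}\sigma(0)^{-1}$, and one from the variation of the area form $dx\,dy$ under $\mu_k$. The first vanishes upon pairing because the holomorphic choice of $\psi_i(t)$ forces the residual derivative to land in the $L^2$-orthogonal complement of the holomorphic quadratic differentials. The remaining two terms assemble into an integral whose kernel is given by the Green's operator $G$ for $\Delta-2$ on $X_0$ applied to a quantity built from $\mu_k$; since $G$ is self-adjoint and the surviving integrand contains a factor $\psi_i\psi_k$ that is pointwise symmetric in $i$ and $k$, the sum is symmetric in $i$ and $k$.

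The main obstacle is the explicit variational computation of $\dot\sigma(0)=\partial_{t_k}\sigma(0)$ and the verification that its contribution, together with the area-element variation, assembles into the symmetric integral above. This is Ahlfors' classical computation: one linearizes the Liouville equation to obtain $(\Delta-2)(\dot\sigma/\sigma_0)$ as an explicit local expression in $\mu_k$, then inverts using the Green's operator $G$ of $\Delta-2$. The self-adjointness of $G$, combined with the pointwise symmetry of $\psi_i\psi_k$, exhibits the required symmetry. Modulo this kernel representation, the remaining steps are routine integration by parts on the compact hyperbolic surface, and combined with the real-analyticity of $g_{i\bar j}(t)$ established above, this proves the theorem.
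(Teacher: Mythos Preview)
The paper does not prove this theorem; it is quoted as a background result with a citation to Ahlfors and is invoked later only to guarantee that Weil--Petersson geodesics are real-analytic in their arc-length parameter. So there is no proof in the paper to compare your proposal against.

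Your outline is broadly the classical route going back to Ahlfors himself: Bers coordinates, a holomorphic frame of quadratic differentials, real-analytic dependence of the hyperbolic density via elliptic regularity for the Liouville equation, and then the first-variation computation for the K\"ahler condition using the Green's operator of $\Delta-2$. One detail worth tightening: in harmonic-Beltrami coordinates Ahlfors in fact established the stronger statement $\partial_{t_k} g_{i\bar j}(0)=0$ at the basepoint, so the required symmetry in $i$ and $k$ is the trivial identity $0=0$; your account of which pieces survive and why (in particular the claim that the $\partial_{t_k}\psi_i(0)$ contribution pairs to zero by landing in an orthogonal complement) is not quite the actual cancellation mechanism. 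This is a matter of bookkeeping rather than a gap in the overall strategy.
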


The following convexity theorem is due to Wolpert \cite{Wolpert87}. He used this result to give a new solution to the Nielsen Realization Problem which was first solved by Kerckhoff \cite{Ker83}. An alternative proof of this convexity theorem was given by Wolf \cite{Wolf12}, through using harmonic map theory.
\begin{theorem}[Wolpert]\label{conv}
For any essential simple closed curve $\alpha\subset S_{g,n}$, the length function $\ell_{\alpha}:\Teich(S_{g,n})\to \mathbb{R}^+$ is strictly convex.
\end{theorem}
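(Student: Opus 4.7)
The plan is to reduce strict convexity of $t \mapsto \ell_\alpha(\gamma(t))$ along any Weil--Petersson geodesic $\gamma$ to a pointwise positivity statement for the WP Hessian. Since $\ell_\alpha$ is real analytic by Lemma \ref{l-ana} and the WP metric is real analytic K\"ahler by Theorem \ref{ahlf}, the composition $\ell_\alpha \circ \gamma$ is real analytic; and because $\gamma$ is a geodesic, $\ddot\gamma \equiv 0$, so
$$\frac{d^2}{dt^2}\ell_\alpha(\gamma(t)) \;=\; \Hess_{WP}\ell_\alpha(\dot\gamma(t),\dot\gamma(t)).$$
It therefore suffices to show that at every $X \in \Teich(S_{g,n})$ and every nonzero harmonic Beltrami differential $\mu$, one has $\Hess\,\ell_\alpha(\mu,\mu) > 0$.

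To compute this Hessian at $X = \H/\Gamma$, I would fix a primitive hyperbolic element $A \in \Gamma$ whose axis projects to $\alpha$ and normalize so that this axis is $\mathbf{i}\R^+$. For a harmonic Beltrami differential $\mu = \bar\psi/\sigma$, Gardiner's first variation formula reads
$$d\ell_\alpha(\mu) \;=\; \frac{2}{\pi}\,\Re \int_X \mu\cdot \Theta_\alpha \, dA,$$
where $\Theta_\alpha$ is the Poincar\'e series over $\langle A\rangle\backslash\Gamma$ of $dz^2/z^2$ (this is the same object that underlies Riera's formula used centrally in Section \ref{sys-wg}). Differentiating once more along the WP flow generated by $\mu$, unfolding the Poincar\'e series to $\H$, and invoking harmonicity of $\mu$, the second variation should decompose as a coset sum over $\langle A\rangle\backslash\Gamma$ of integrals over fundamental domains in $\H$. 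After integration by parts and completing squares, the target expression has the shape
$$\Hess\,\ell_\alpha(\mu,\mu) \;=\; \frac{1}{\pi}\sum_{A'}\int_{D_{A'}}\bigl(|u_{A',1}|^2 + |u_{A',2}|^2\bigr)\,dA,$$
which is manifestly nonnegative; strict positivity on $\mu\neq 0$ then follows because $\Theta_\alpha$ is a nonzero holomorphic quadratic differential, so a unique continuation argument rules out the simultaneous vanishing of all $u_{A',k}$ unless $\mu \equiv 0$.

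The main obstacle is producing this manifestly positive expression. The raw second variation contains twist-flow contributions and cross terms of indefinite sign, and their reorganization into a sum of squares is the technical core of the argument. The crucial algebraic input is Wolpert's symplectic duality $\omega_{WP}(\,\cdot\,,t_\alpha) = \tfrac{1}{2}\,d\ell_\alpha$ between the infinitesimal Fenchel--Nielsen twist vector field $t_\alpha$ and the length differential; this duality intertwines the Hamiltonian and Riemannian gradient flows of $\ell_\alpha$ and is precisely what allows the indefinite cross terms to be absorbed into the sum of squares. A conceptually different route, pursued by Wolf, realizes a WP geodesic as a variation of the domain of a harmonic diffeomorphism and deduces convexity of $\ell_\alpha$ from convexity of the harmonic-map energy in nonpositive curvature; this bypasses the Poincar\'e-series computation but requires harmonic-map machinery outside the toolkit assembled in the present paper.
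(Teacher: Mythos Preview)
The paper does not prove Theorem~\ref{conv}; it simply quotes it as a known result of Wolpert \cite{Wolpert87} and notes Wolf's alternative proof via harmonic maps \cite{Wolf12}. There is therefore no ``paper's own proof'' to compare against.

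Regarding your sketch on its merits: the overall architecture---reduce to positivity of the Weil--Petersson Hessian, express the second variation via Gardiner's differential and Poincar\'e series, and reorganize into a manifestly nonnegative sum---is indeed the shape of Wolpert's argument in \cite{Wolpert87}. However, your write-up is not yet a proof. The displayed formula
\[
\Hess\,\ell_\alpha(\mu,\mu) \;=\; \frac{1}{\pi}\sum_{A'}\int_{D_{A'}}\bigl(|u_{A',1}|^2 + |u_{A',2}|^2\bigr)\,dA
\]
is asserted with undefined $u_{A',k}$ and no derivation; you yourself identify ``producing this manifestly positive expression'' as the main obstacle, and you do not resolve it. The invocation of the symplectic duality $\omega_{WP}(\cdot,t_\alpha)=\tfrac12\,d\ell_\alpha$ as the mechanism that absorbs the indefinite cross terms is suggestive but not an argument. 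In short, you have correctly located the strategy and the difficulty, but the technical core---Wolpert's explicit second-variation computation leading to a sum of squares---is missing, so this remains an outline rather than a proof.
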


\subsection{Augmented Teichm\"uller space}
The non-completeness of the Weil-Petersson metric corresponds to finite-length geodesics in $\Teich(S_{g,n})$ along which some essential simple closed curve pinches to zero. In \cite{Masur76} the completion $\overline{\Teich(S_{g,n})}$ of $\Teich(S_{g,n})$, called the \textsl{augmented Teichm\"uller space}, is described concretely by adding strata consisting of stratum $\sT_{\sigma}$ defined by the vanishing of lengths
\begin{eqnarray*}
\ell_{\alpha}=0
\end{eqnarray*}
for each $\alpha \in \sigma$ where $\sigma$ is a collection of mutually disjoint essential simple closed curves. The stratum $\sT_{\sigma}$ are naturally products of lower dimensional Teichm\"uller spaces corresponding to the nodal surfaces in $\sT_{\sigma}$ \cite{Masur76}. The space $\overline{\Teich(S_{g,n})}$ is a complete CAT(0) space. It was shown in \cite{DW03, Wolpert03, Yamada04} that every stratum $\sT_{\sigma}$ is totally geodesic in $\overline{\Teich(S_{g,n})}$. Since the completion $\overline{\sT_{\sigma}}$ of $\sT_{\sigma}$ is convex in $\overline{\Teich(S_{g,n})}$, by elementary CAT(0) geometry (see\cite{BH}) the nearest projection map 
\[\pi_{\sigma}:\Teich(S_{g,n}) \to \overline{\sT_{\sigma}}\]
is well-defined. Using Wolpert's theorem on the structure of the Alexandrov tangent cone at the boundary of $\overline{\Teich(S_{g,n})}$ (see Theorem 4.18 in \cite{Wolpert08}) and the first variation formula for the distance function, one can show that for any $X\in \Teich(S_{g,n})$, the image $\pi_{\sigma}(X)$ is contained in $\sT_{\sigma}$. One can see more details in \cite{Fuj2013, Wu12}. 

The following result of Wolpert (see section 4 in \cite{Wolpert08} for more details) will be used to prove the upper bounds in Theorems \ref{mt-g} and \ref{mt-n}. Denote by $\dist_{wp}(\cdot, \cdot)$ the \wep distance.  

\begin{theorem}[Wolpert]\label{d-stra}
For any $X\in \Teich(S_{g,n})$, then we have 
\[\dist_{wp}(X,\pi_{\sigma}(X))\leq \sqrt{2\pi \cdot \sum_{\alpha \in \sigma^0}\ell_{\alpha}(X)}.\]
\end{theorem}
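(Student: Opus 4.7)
The plan is to construct an explicit pinching path from $X$ to a point of $\overline{\sT_\sigma}$ whose Weil-Petersson length is bounded by $\sqrt{2\pi\sum_\alpha \ell_\alpha(X)}$, and invoke the definition of $\pi_\sigma(X)$ as the nearest-point projection. The core tool is the local expansion of the Weil-Petersson metric in root-length coordinates near a boundary stratum.

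First I would assemble the two key gradient estimates due to Wolpert, both proved via direct computation of the Petersson pairing of Beltrami differentials dual to the real Fenchel-Nielsen vector fields. The first is the gradient-norm identity
\[
\|\grad_{WP}\ell_\alpha^{1/2}\|^2 = \frac{1}{2\pi} + O(\ell_\alpha^{3}),
\]
and the second is the asymptotic orthogonality $\langle \grad \ell_\alpha^{1/2},\grad \ell_\beta^{1/2}\rangle_{WP} = O(\ell_\alpha^{3/2}\ell_\beta^{3/2})$ for distinct disjoint curves $\alpha,\beta\in\sigma$. Together these say that the gradients $\{\grad \ell_\alpha^{1/2}\}_{\alpha\in\sigma}$ form an asymptotically orthonormal frame (rescaled by $\sqrt{2\pi}$) transverse to the stratum $\sT_\sigma$.

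Next, enumerate $\sigma=\{\alpha_1,\dots,\alpha_k\}$ and set $L_i=\sqrt{\ell_{\alpha_i}(X)}$. Consider the path $\gamma\colon[0,1]\to\overline{\Teich(S_{g,n})}$ obtained by flowing along a linear combination of the gradients, with coefficients tuned so that $\sqrt{\ell_{\alpha_i}(\gamma(t))}=(1-t)L_i$ for every $i$ simultaneously; this is a linear pinching in root-length coordinates and exists by the inverse function theorem plus the convexity of $\ell_\alpha^{1/2}$ along Weil-Petersson geodesics (Theorem~\ref{conv} and Wolf's refinement). The tangent $\dot\gamma(t)$ lies in the span of the $\grad\ell_{\alpha_i}^{1/2}$ with prescribed images under each $d\ell_{\alpha_j}^{1/2}$, so solving the resulting $k\times k$ linear system and using the gradient identity plus the off-diagonal decay yields
\[
\|\dot\gamma(t)\|_{WP}^{2}\leq 2\pi\sum_{i=1}^k L_i^{2} + (\text{lower order}),
\]
and hence, by integrating and applying Cauchy--Schwarz, $\mathrm{length}_{WP}(\gamma)\leq \sqrt{2\pi\sum_\alpha \ell_\alpha(X)}$. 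Since $\gamma(1)\in \overline{\sT_\sigma}$ and $\pi_\sigma(X)$ realizes the distance from $X$ to $\overline{\sT_\sigma}$, the stated bound follows.

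The main obstacle is that the identity $\|\grad\ell_\alpha^{1/2}\|^2\approx\frac{1}{2\pi}$ and the near-orthogonality are only asymptotic as $\ell_\alpha\to0$, whereas the theorem is stated for arbitrary $X\in\Teich(S_{g,n})$. To close this gap one needs a global gradient inequality of the form $\|\grad\ell_\alpha^{1/2}\|^2_{WP}\geq \frac{1}{2\pi}(1-\psi(\ell_\alpha))$ with $\psi$ controlled for all $\ell_\alpha$, derived from the explicit Petersson pairing of the Fenchel-Nielsen infinitesimal pinching differential on a collar; alternatively, one can dyadically decompose $[0,1]$ into subintervals on which $L_i(1-t)$ halves, applying the asymptotic estimates in each piece and summing a geometric series. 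Controlling the cross-terms between different curves in $\sigma$ on the non-asymptotic portion, so that the bound remains exactly $\sqrt{2\pi\sum_\alpha \ell_\alpha(X)}$ with no multiplicative loss, is the delicate part of the argument.
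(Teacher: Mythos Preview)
The paper does not give its own proof of Theorem~\ref{d-stra}; it is quoted from Wolpert \cite[Section~4]{Wolpert08}. However, the paper essentially reproves the single-curve case inside the proof of Theorem~\ref{leaf}: it flows along $-\grad\sqrt{2\pi\ell_\alpha}/\|\grad\sqrt{2\pi\ell_\alpha}\|_{wp}$ and uses the global inequality $\|\grad\sqrt{2\pi\ell_\alpha}\|_{wp}>1$ (Equation~(\ref{5.11-1}), an immediate consequence of Riera's formula~(\ref{Rie-f})) to bound the path length by $\sqrt{2\pi\ell_\alpha(X)}$.

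Your approach is in the right spirit but misidentifies the main difficulty. The ``obstacle'' you flag---that the expansion $\|\grad\ell_\alpha^{1/2}\|^2=\frac{1}{2\pi}+O(\ell_\alpha^3)$ is only asymptotic---is not actually present. Riera's formula gives, for \emph{every} $X$ and every simple closed curve $\alpha$, the exact lower bound $\langle\grad\ell_\alpha,\grad\ell_\alpha\rangle_{wp}\geq\frac{2}{\pi}\ell_\alpha$, because each double-coset term $u\ln\frac{u+1}{u-1}-2$ is nonnegative for $u>1$. Hence $\|\grad\ell_\alpha^{1/2}\|_{wp}^2\geq\frac{1}{2\pi}$ holds globally, with no need for dyadic decomposition or collar estimates. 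Likewise, for disjoint $\alpha,\beta\in\sigma$ the cross-pairing $\langle\grad\ell_\alpha,\grad\ell_\beta\rangle_{wp}$ is globally nonnegative by the same formula, so the ``delicate'' control of cross-terms you worry about is also unnecessary.

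With these two observations the argument becomes much simpler than yours: set $f=\sqrt{2\pi\sum_{\alpha\in\sigma}\ell_\alpha}$ and compute
\[
\|\grad f\|_{wp}^2=\frac{\pi^2}{f^2}\sum_{\alpha,\beta\in\sigma}\langle\grad\ell_\alpha,\grad\ell_\beta\rangle_{wp}\geq\frac{\pi^2}{f^2}\sum_{\alpha\in\sigma}\frac{2}{\pi}\ell_\alpha=1,
\]
then flow along $-\grad f/\|\grad f\|_{wp}$. This is Wolpert's actual argument; your simultaneous-linear-pinching path and the inverse-function-theorem justification are not needed (and the appeal to convexity along geodesics to produce that specific path is not obviously valid).
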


It was shown by Masur \cite{Masur76} that the completion $\overline{\sM}_{g,n}$ of moduli space $\sM_{g,n}$ is homeomorphic to the Deligne-Mumford compactification of moduli space. Recall that the inradius 
$\InR(\sM_{g,n})$ of $\sM_{g,n}$ is defined as $\max_{X\in \sM_{g,n}}\dist_{wp}(X, \partial \overline{\sM}_{g,n})$.
The inradius $\InR(\sM_{g,n})$ is the largest radius of geodesic balls in the interior of $\overline{\sM}_{g,n}$. Similarly, we also define the \textsl{inradius} $\InR(\Teich(S_{g,n}))$ of $\Teich(S_{g,n})$ as 
\[\InR(\Teich(S_{g,n})):=\max_{X\in \Teich(S_{g,n})}\dist_{wp}(X, \partial \overline{\Teich(S_{g,n})})\]
where $\partial \overline{\Teich(S_{g,n})}$ is the boundary of $\overline{\Teich(S_{g,n})} $.

In this article we will study the asymptotic behaviors of $\InR(\sM_{g,n})$ and $\InR(\Teich(S_{g,n}))$ either as $g$ goes to infinity or as $n$ goes to infinity.


\section{The systole function is piecewise real analytic}\label{one-l}
As stated in Section \ref{np}, although the systole function $\lsys(\cdot)$ is continuous over $\Teich(S_{g,n})$, it is not smooth. In this section we will provide two fundamental lemmas on the systole function $\lsys(\cdot)$ along a \wep geodesic such that we can take the derivative of the systole function along the \wep geodesic, which are crucial in the proof of Theorem \ref{mt-lip}. Before stating the results, we provides three basic claims on geodesic length functions. We always assume \wep geodesics use arc-length parameters.

\begin{claim}\label{f-1}
For any essential simple closed curve $\alpha \subset S_{g,n}$ and $\gamma:[0,s]\to \Teich(S_{g,n})$ be a \wep geodesic where $s>0$ is a constant. Then the geodesic length function $\ell_{\alpha}(\gamma(t)):[0,s]\to \mathbb{R}^+$ is real-analytic on $t$.
\end{claim}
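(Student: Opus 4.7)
The plan is to combine the real analyticity of the Weil--Petersson metric (Theorem \ref{ahlf}, Ahlfors) with the real analyticity of the length function $\ell_{\alpha}$ on $\Teich(S_{g,n})$ (Lemma \ref{l-ana}), using the classical fact that geodesics in a real-analytic Riemannian manifold are real-analytic curves.

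First I would fix real-analytic local coordinates $(x^1,\dots,x^N)$ near $\gamma(t_0)$ for some $t_0\in[0,s]$. Because $\Teich(S_{g,n})$ is real-analytic K\"ahler by Theorem \ref{ahlf}, the components $g_{ij}(x)$ of the Weil--Petersson metric tensor are real-analytic functions of $x$, so the Christoffel symbols $\Gamma^k_{ij}(x)$ are real-analytic as well. The geodesic equations
\[
\ddot{\gamma}^k(t) + \Gamma^k_{ij}(\gamma(t))\,\dot{\gamma}^i(t)\,\dot{\gamma}^j(t) = 0,\qquad k=1,\dots,N,
\]
therefore form a real-analytic system of second order ODEs. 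By the Cauchy--Kovalevskaya theorem (equivalently, the real-analytic version of the Picard existence theorem applied to the associated first-order system on the tangent bundle), the unique solution with prescribed initial data $(\gamma(t_0),\dot{\gamma}(t_0))$ is a real-analytic function of $t$ on a neighborhood of $t_0$.

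Hence $\gamma:[0,s]\to \Teich(S_{g,n})$ is locally real-analytic near every $t_0\in[0,s]$, and so real-analytic on all of $[0,s]$. Since $\ell_{\alpha}:\Teich(S_{g,n})\to\mathbb{R}^+$ is real-analytic by Lemma \ref{l-ana}, and since the composition of real-analytic maps is real-analytic, the function
\[
t\longmapsto \ell_{\alpha}(\gamma(t))
\]
is real-analytic on $[0,s]$, which is the desired conclusion. There is no substantive obstacle here: the only point that requires care is invoking the real-analytic (rather than merely smooth) version of ODE theory, which is justified because Theorem \ref{ahlf} upgrades the smooth K\"ahler structure on $\Teich(S_{g,n})$ to a real-analytic one.
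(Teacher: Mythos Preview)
Your proposal is correct and follows essentially the same route as the paper: invoke Ahlfors' real-analyticity of the Weil--Petersson metric (Theorem~\ref{ahlf}) to see that the Christoffel symbols are real-analytic, apply Cauchy--Kovalevskaya to conclude that Weil--Petersson geodesics are real-analytic curves, and then compose with the real-analytic length function from Lemma~\ref{l-ana}. The only difference is that you spell out the geodesic equation and the local-to-global passage explicitly, which the paper leaves implicit.
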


\begin{proof}[Proof of Claim \ref{f-1}]
From Lemma \ref{ahlf} we know that $\Teich(S_{g,n})$ is real-analytic. In particular, all the Christoffel symbols are real-analytic. Thus, the classical Cauchy-Kowalevski Theorem gives that the solution of the \wep geodesic equation is real-analytic. That is, every \wep geodesic is real-analytic. Then the claim follows from Lemma \ref{l-ana}.
\end{proof}

Let $X\in \Teich(S_{g,n})$. We define the set $\ssys(X)$ of systolic curves as 
\[\ssys(X):=\{\beta \subset S_{g,n};\  \ell_{\beta}(X)=\lsys(X)\}.\]
It is clear that the set $\ssys(X)$ is finite for all $X\in \Teich(S_{g,n})$.

\begin{claim}\label{f-2}
Let $s>0$ and $\gamma:[0,s]\to \Teich(S_{g,n})$ be a \wep geodesic. Then the union $\displaystyle \cup_{0\leq t\leq s} \ssys(\gamma(t))$ is a finite set. 
\end{claim}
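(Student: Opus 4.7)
The plan is to combine the compactness of the image $\gamma([0,s])\subset\Teich(S_{g,n})$ with the well-known fact that on any fixed hyperbolic surface only finitely many essential simple closed geodesics have length below a prescribed bound. The bridge between these two inputs is to show that every curve realizing the systole somewhere along $\gamma$ has length uniformly bounded at a single reference point $X_{0}:=\gamma(0)$.

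To set things up, $\gamma([0,s])$ is compact in $\Teich(S_{g,n})$ as the continuous image of a closed interval, so by continuity of the systole function $L:=\max_{t\in[0,s]}\lsys(\gamma(t))$ is finite. Any $\alpha\in\bigcup_{t\in[0,s]}\ssys(\gamma(t))$ then satisfies $\ell_{\alpha}(\gamma(t_{\alpha}))=\lsys(\gamma(t_{\alpha}))\leq L$ for some $t_{\alpha}\in[0,s]$.

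Next I would bound $\ell_{\alpha}(X_{0})$ uniformly in $\alpha$. Since $\gamma([0,s])$ is compact and the Teichm\"uller metric is continuous in the standard topology of Teichm\"uller space, the Teichm\"uller diameter $D$ of $\gamma([0,s])$ is finite. Wolpert's classical bi-Lipschitz estimate for geodesic length functions under quasiconformal deformations then yields $\ell_{\alpha}(X_{0})\leq e^{2D}\ell_{\alpha}(\gamma(t_{\alpha}))\leq e^{2D}L=:L'$, a bound independent of $\alpha$. Finiteness of the union is then immediate: the set of free homotopy classes of essential simple closed geodesics of length at most $L'$ on the fixed hyperbolic surface $X_{0}$ is finite, since $\pi_{1}(S_{g,n})$ acts properly discontinuously on $\H$.

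The main obstacle is the uniform length bound in the previous step, since a distortion estimate insensitive to the individual curve $\alpha$ is required. Routing through the Teichm\"uller distance is the cleanest approach and, importantly, avoids any circularity with the Weil-Petersson Lipschitz bound on $\sqrt{\ell_{\alpha}}$ that the rest of the paper is devoted to establishing (cf.\ Theorem \ref{mt-lip}).
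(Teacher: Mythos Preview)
Your proof is correct and follows essentially the same route as the paper: both arguments use compactness of $\gamma([0,s])$ to bound the systole and the Teichm\"uller diameter, invoke Wolpert's length-distortion inequality $\ell_{\alpha}(X)\leq e^{2\dist_{T}(X,Y)}\ell_{\alpha}(Y)$ (cited in the paper as \cite[Lemma 3.1]{Wolpert79}) to get a uniform bound $\ell_{\alpha}(\gamma(0))\leq L'$ independent of $\alpha$, and then conclude by the finiteness of curves of bounded length on a fixed hyperbolic surface. Your remark about avoiding circularity with Theorem~\ref{mt-lip} is a worthwhile observation, though not made explicit in the paper.
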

\begin{proof}[Proof of Claim \ref{f-2}]
First we denote by $\dist_{T}(\cdot, \cdot)$ the \tec distance. Since the image $\gamma([0,s])$ is a compact subset in $\Teich(S_{g,n})$, there exists a constant $K>0$ such that the \tec distance $$\max_{t\in [0,s]}\dist_{T}(\gamma(0), \gamma(t))\leq K$$ and $$\max_{t\in [0,s]}\lsys(\gamma(t))\leq K.$$  
By \cite[Lemma 3.1]{Wolpert79} we know that for all $t\in [0,s]$ and $\beta(t) \in \ssys(\gamma(t))$ we have $\ell_{\beta(t)}(\gamma(0))\leq K\cdot e^{2K}$. That is, the union satisfies $$\displaystyle \cup_{0\leq t\leq s} \ssys(\gamma(t))\subset \{\beta \subset S_{g,n}; \ell_{\beta}(\gamma(0))\leq K\cdot e^{2K}\}$$ which is a finite set. Then the claim follows.
\end{proof}

\noindent We do not know whether the cardinality of the union $\displaystyle \cup_{0\leq t\leq s} \ssys(\gamma(t))$ in the lemma above has any precise upper bound.

\begin{claim}\label{f-3}
Let $s>0$ be a constant, the curve $\gamma:[0,s]\to \Teich(S_{g,n})$ be a \wep geodesic and $\alpha, \beta \in \ssys(\gamma(0))$ be two distinct essential simple closed geodesics. Then either $\ell_{\alpha}(\gamma(t))\equiv\ell_{\beta}(\gamma(t))$ over $[0,s]$ or there exists a constant $0<s_0\leq s$ such that either $\ell_{\alpha}(\gamma(t))<\ell_{\beta}(\gamma(t))$ over $(0,s_0)$ or $\ell_{\beta}(\gamma(t))<\ell_{\alpha}(\gamma(t))$ over $(0,s_0)$.
\end{claim}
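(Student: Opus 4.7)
The plan is to reduce the claim to the standard fact that a real-analytic function on an interval either vanishes identically or has isolated zeros. Consider the auxiliary function
\[
F(t) := \ell_{\alpha}(\gamma(t)) - \ell_{\beta}(\gamma(t)), \qquad t \in [0,s].
\]
By Claim \ref{f-1} each of $\ell_{\alpha}(\gamma(t))$ and $\ell_{\beta}(\gamma(t))$ is real-analytic on $[0,s]$, so $F$ is real-analytic on $[0,s]$. The hypothesis that $\alpha, \beta \in \ssys(\gamma(0))$ says $\ell_\alpha(\gamma(0)) = \ell_\beta(\gamma(0)) = \lsys(\gamma(0))$, hence $F(0)=0$.

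Now I split into two cases based on the zero set of $F$. If $F \equiv 0$ on $[0,s]$, then $\ell_{\alpha}(\gamma(t)) \equiv \ell_{\beta}(\gamma(t))$ on $[0,s]$, which is the first alternative of the claim. Otherwise, $F$ is a real-analytic function on $[0,s]$ that is not identically zero, so its zero set is discrete (an accumulation of zeros would force all Taylor coefficients to vanish at the accumulation point, contradicting $F \not\equiv 0$). In particular $t=0$ is an isolated zero from the right, so there exists $0<s_0\leq s$ with $F(t)\neq 0$ for every $t \in (0,s_0)$.

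Finally, by continuity of $F$ and the fact that $F$ does not vanish on the connected interval $(0,s_0)$, $F$ must have constant sign throughout $(0,s_0)$. If $F(t) > 0$ on $(0,s_0)$, then $\ell_{\beta}(\gamma(t)) < \ell_{\alpha}(\gamma(t))$ there; if $F(t) < 0$ on $(0,s_0)$, then $\ell_{\alpha}(\gamma(t)) < \ell_{\beta}(\gamma(t))$ there. In either subcase we obtain the second alternative of the claim. I do not foresee any real obstacle here; the only substantive input is Claim \ref{f-1} (real-analyticity of length along \wep geodesics), which has already been proved, and the rest is the identity theorem for real-analytic functions of one variable combined with an elementary continuity argument.
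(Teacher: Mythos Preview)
Your proof is correct and takes essentially the same approach as the paper: both arguments define (implicitly or explicitly) the difference $F(t)=\ell_\alpha(\gamma(t))-\ell_\beta(\gamma(t))$, invoke Claim~\ref{f-1} for real-analyticity, and then use the dichotomy that a real-analytic function is either identically zero or has isolated zeros with a definite sign on one side. The only cosmetic difference is that the paper first extends $\gamma$ to an open interval $(-\epsilon,s+\epsilon)$ and phrases the dichotomy via the first nonvanishing Taylor coefficient at $t=0$, whereas you appeal directly to the identity theorem; these are the same argument.
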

\begin{proof}[Proof of Claim \ref{f-3}]
Since the image $\gamma([0,s])$ is contained in $\Teich(S_{g,n})$, we can extend the geodesic $\gamma([0,s])$ in both directions a little bit longer. That is, there exists a positive constant $\epsilon>0$ such that $\gamma: (-\epsilon,s+\epsilon)\to \Teich(S_{g,n})$ is well-defined. By Claim \ref{f-1} we know that both $\ell_{\alpha}$ and $\ell_{\beta}$ are real-analytic along the \wep geodesic $\gamma(-\epsilon,s+\epsilon)$. If all the derivatives $\ell_{\alpha}^{(k)}(\gamma(0))=\ell_{\beta}^{(k)}(\gamma(0))$ for all $k \in \mathbb{N}^+$, then the Taylor expansions of $\ell_{\alpha}$ and $\ell_{\beta}$ at $\gamma(0)$ tells that $\ell_{\alpha}(\gamma(t))\equiv\ell_{\beta}(\gamma(t))$ over $[0,s]$. Otherwise, there exists a positive integer $k_0$ such that $\ell_{\alpha}^{(k)}(\gamma(0))=\ell_{\beta}^{(k)}(\gamma(0))$ for all $0\leq k \leq k_0-1$ and $\ell_{\alpha}^{(k_0)}(\gamma(0))\neq \ell_{\beta}^{(k_0)}(\gamma(0))$. The Taylor expansions of $\ell_{\alpha}$ and $\ell_{\beta}$ at $\gamma(0)$ clearly imply the later case of the claim.
\end{proof}

Now we are ready to state the first lemma, which will be applied to prove Proposition \ref{lip-sys}.
\begin{lemma}\label{key-l}
Let $X\neq Y\in \Teich(S_{g,n})$, $s=\dist_{wp}(X,Y)>0$ and $\gamma:[0,s]\to \Teich(S_{g,n})$ be the \wep geodesic with $\gamma(0)=X$ and $\gamma(s)=Y$. Then there exist a positive integer $k$, a partition $0=t_0<t_1<\cdots<t_{k-1}<t_k=s$ of the interval $[0,s]$ and a sequence of essential simple closed curves $\{\alpha_{i}\}_{0\leq i \leq k-1}$ in $S_{g,n}$ such that for all $0\leq i \leq k-1$,\\
$(1).  \ \alpha_i \neq \alpha_{i+1}.$\\
$(2). \ \ell_{\alpha_i}(\gamma(t))=\lsys(\gamma(t)), \quad \forall t_i\leq t \leq t_{i+1}.$
\end{lemma}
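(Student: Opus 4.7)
The plan is to reduce the lemma to a finite combinatorial problem using the three preliminary claims, where Claim~\ref{f-2} provides finiteness of the candidate pool and Claim~\ref{f-1} (together with Claim~\ref{f-3}) controls how these candidates compare.

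First, I would invoke Claim~\ref{f-2} to obtain a finite set of essential simple closed curves $\{\beta_1,\dots,\beta_N\}$ containing $\bigcup_{0\leq t \leq s}\ssys(\gamma(t))$. By Claim~\ref{f-1}, each $f_i(t):=\ell_{\beta_i}(\gamma(t))$ is real-analytic on $[0,s]$, and the systole function along the geodesic satisfies
\[
\lsys(\gamma(t))=\min_{1\leq i\leq N} f_i(t),\qquad t\in [0,s].
\]
For each pair $(i,j)$ with $i\neq j$, the difference $f_i-f_j$ is real-analytic, so either $f_i\equiv f_j$ on $[0,s]$, or the zero set $Z_{ij}:=\{t\in[0,s]:f_i(t)=f_j(t)\}$ is finite (this is essentially the conclusion of Claim~\ref{f-3} applied at each point of equality, but I would phrase it directly via analyticity). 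Let $Z$ be the union of all these finite $Z_{ij}$, together with $\{0,s\}$. Then $Z$ is a finite set; write it as $0=t_0<t_1<\cdots<t_k=s$.

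Second, I would show that on each open subinterval $(t_i,t_{i+1})$ the identity of the minimizer is constant. Indeed, for any two indices $i_1,i_2$ with $f_{i_1}\not\equiv f_{i_2}$, the sign of $f_{i_1}-f_{i_2}$ is constant on $(t_i,t_{i+1})$ since this interval avoids $Z_{i_1 i_2}$; while pairs with $f_{i_1}\equiv f_{i_2}$ contribute the same value everywhere. Consequently, there is an index $\alpha_i\in\{\beta_1,\dots,\beta_N\}$ with $f_{\alpha_i}(t)=\lsys(\gamma(t))$ for all $t\in(t_i,t_{i+1})$, and by continuity of both sides the same equality extends to the closed interval $[t_i,t_{i+1}]$. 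This yields a candidate partition and labelling.

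Finally, I would post-process the partition: if $\alpha_i=\alpha_{i+1}$ for some $i$, I simply delete the intermediate division point $t_{i+1}$ and merge the two adjacent subintervals, repeating until consecutive labels differ. Because the initial partition is finite, this terminates in finitely many steps and produces the required data $(k,\{t_i\},\{\alpha_i\})$ satisfying (1) and (2). I do not expect a serious obstacle here; the only point requiring care is the possibility of curves whose length functions coincide identically along $\gamma$, which is handled by treating such curves as interchangeable and simply selecting one representative for $\alpha_i$ on each cell.
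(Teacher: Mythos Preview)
Your proof is correct and somewhat cleaner than the paper's. Both proofs begin identically by invoking Claim~\ref{f-2} to reduce to finitely many real-analytic functions $f_i$, and both ultimately rest on the fact that two real-analytic functions on a compact interval either coincide identically or agree at only finitely many points. The difference is organizational: you form the partition in one stroke by collecting all pairwise crossing sets $Z_{ij}$ into a single finite set $Z$, observe that the total ordering of the $f_i$'s is constant on each complementary interval, and then merge adjacent intervals carrying the same label. The paper instead builds the partition inductively from the left---at each step using Claim~\ref{f-3} to select a curve realizing the systole on a maximal initial segment, then restarting at the right endpoint---and must afterwards argue by contradiction that this process terminates: assuming infinitely many switch times accumulate at some $T\leq s$, the paper extracts two curves $\alpha\neq\beta$ whose behavior on sequences approaching $T$ contradicts the dichotomy of Claim~\ref{f-3}. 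Your global approach bypasses this termination argument entirely, at the modest cost of introducing possibly extraneous partition points that are removed in the final merging step.
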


\begin{proof}
First by Claim \ref{f-2} one may assume that the union $$\displaystyle \cup_{0\leq t\leq s} \ssys(\gamma(t))=\{\beta_i\}_{1\leq i \leq n'}$$ for some positive integer $n'$ where $\beta_i \subset S_{g,n}$ is an essential simple closed curve for each $1\leq i \leq n'$. Without loss of generality one may assume that $\ssys(\gamma(0))$ consists of the first $n_0$ curves for some $0<n_0\leq n'$. That is
$$\ssys(\gamma(0))=\cup_{1\leq i \leq n_0}\{\beta_{i}\}.$$ 

Thus, for all $1\leq i \leq n_0$ and $n_0+1 \leq j \leq n'$ we have
\[\ell_{\beta{i}}(\gamma(0))<\ell_{\beta{j}}(\gamma(0)).\]

By the inequality above and using Claim \ref{f-3} finite number of steps (induction on $n_0$), there exist a positive constant $s_0\leq s$ and an essential simple closed curve in the set of systolic curves $\ssys(\gamma(0))$ of $\gamma(0)$, which is denoted by $\alpha_0$, such that for all $1\leq i \leq n'$ we have
\[\ell_{\alpha_0}(\gamma(t))\leq \ell_{\beta_i}(\gamma(t)), \ \forall \ 0\leq t \leq s_0.\]

Set 
\[t_1=\max\{t';\  \ell_{\alpha_0}(\gamma(t))\leq  \displaystyle\min_{1\leq i \leq n'}\ell_{\beta_i}(\gamma(t)), \ \forall 0\leq t\leq t'\}.\] 

In particular, 
\[  \ell_{\alpha_0}(\gamma(t))=\lsys(\gamma(t)), \ \forall 0\leq t\leq t_1.\] 

It is clear that 
\[0<s_0\leq t_1\leq s.\]

We may assume that $t_1<s$; otherwise we are done. 

Using the same argument above at $\gamma(t_1)$ there exist a positive constant $t_2$ with $t_1<t_2\leq s$ and an essential simple closed curve in $\ssys(\gamma(t_1))$, which is denoted by $\alpha_1$, such that
\[  \ell_{\alpha_1}(\gamma(t))\leq  \displaystyle\min_{1\leq i \leq n'}\ell_{\beta_i}(\gamma(t)), \ \forall t_1\leq t\leq t_2.\] 

In particular, 
\[  \ell_{\alpha_1}(\gamma(t))=\lsys(\gamma(t)), \ \forall t_1\leq t\leq t_2.\] 

From the definition of $t_1$ we know that 
\[\alpha_0 \neq \alpha_1.\]

Thus, from Claim \ref{f-3} and the definition of $t_1$ we know that there exists a constant $r_1>0$ with $r_1<t_2-t_1$ such that 
\[  \ell_{\alpha_1}(\gamma(t))< \ell_{\alpha_0}(\gamma(t)), \ \forall t_1< t< t_1+r_1.\]\

Then the conclusion follows by a finite induction. 

We argue by contradiction. If not, then there exist two infinite sequences of positive constants $\{t_i\}_{i\geq 1}$ with $t_i<t_{i+1}<s$,   $\{r_i\}_{i\geq 1}$ with $0<r_i<t_{1+i}-t_i$, and a sequence of essential simple closed curves $$\{\alpha_{i}\}_{i\geq 1}\subset \displaystyle \cup_{0\leq t\leq s} \ssys(\gamma(t))=\{\beta_i\}_{1\leq i \leq n'}$$ such that for all $i\geq 1$,
\begin{eqnarray}
 \ell_{\alpha_i}(\gamma(t))&=&\lsys(\gamma(t)), \ \forall t_i\leq t\leq t_{i+1}.\label{1-1-1}\\
\alpha_i &\neq& \alpha_{i-1}.\\
\ell_{\alpha_{i}}(\gamma(t))&<& \ell_{\alpha_{i-1}}(\gamma(t)), \ \forall t_i< t< t_i+r_i. \label{1-1-3}
\end{eqnarray}

Since $\{t_i\}$ is a bounded increasing sequence, we assume that $\displaystyle \lim_{i\to \infty}t_i=T$. It is clear that $0<T\leq s$. Since $\{\alpha_{i}\}_{i\geq 1}\subset \displaystyle \cup_{0\leq t\leq s} \ssys(\gamma(t))=\{\beta_i\}_{1\leq i \leq n'}$ which is a finite set, there exist two essential simple closed curves $\alpha \neq \beta \in \{\beta_i\}_{1\leq i \leq n'}$, a subsequence $\{t'_{i}\}_{i\geq 1}$ of $\{t_{2i}\}_{i\geq 1}$ and a subsequence $\{t''_{i}\}_{i\geq 1}$ of $\{t_{2i}+\frac{r_{2i}}{2}\}_{i\geq 1}$ such that for all $i\geq 1$,
\begin{eqnarray}
t'_i&<&t''_i<t'_{i+1}.\\
\lim_{i\to \infty}t'_i &=&\lim_{i\to \infty}t''_i= T. \label{1-1-4-0}\\
\ell_{\alpha}(\gamma(t'_i))&=&\lsys(\gamma(t'_i)). \label{1-1-4}\\
\ell_{\beta}(\gamma(t''_i))&=&\lsys(\gamma(t''_i)). 
\end{eqnarray}

Recall that $t''_i$ is of form $t_{2i}+\frac{r_{2i}}{2}$, Equation (\ref{1-1-3}) tells us that
\begin{eqnarray} \label{1-1-6}
\ell_{\beta}(\gamma(t''_i))&=&\lsys(\gamma(t''_i))<\ell_{\alpha}(\gamma(t''_i)). 
\end{eqnarray}

Since geodesic length functions are continuous over $\Teich(S_{g,n})$, 
\[ \ell_{\alpha}(\gamma(T))=\ell_{\beta}(\gamma(T))=\lsys(\gamma(T)).\]
Consider the \wep geodesic $c:[0,T]\to \Teich(S_{g,n})$ which is defined as $c(t)=\gamma(T-t)$ for all $0\leq t \leq T$. We apply Claim \ref{f-3} to $c$ at $c(0)=\gamma(T)$. Then from inequality (\ref{1-1-6}) and Claim \ref{f-3} we know that there exists a constant $s'_0>0$ such that 
\begin{eqnarray}\label{1-1-e}
\ell_{\beta}(c(t))<\ell_{\alpha}(c(t)), \ \forall t \in (0, s'_0).
\end{eqnarray}

On the other hand, from Equations (\ref{1-1-4-0}) and (\ref{1-1-4}) one may choose a number $\epsilon\in (0,s_0')$ to be small enough such that 
\begin{eqnarray}
\ell_{\alpha}(c(\epsilon))=\ell_{\alpha}(\gamma(T-\epsilon))=\lsys(\gamma(T-\epsilon))=\lsys(c(\epsilon))
\end{eqnarray}
which contradicts inequality (\ref{1-1-e}).
\end{proof}

For any $\epsilon_0>0$ we denote by $\Teich(S_{g,n})^{\geq \epsilon_0}$ the $\epsilon_0$-thick part of \tec space endowed with the \wep metric. Let $\Teich(S_{g,n})^{> \epsilon_0}$ be the interior of $\Teich(S_{g,n})^{\geq \epsilon_0}$. The following lemma will be applied to prove Theorem \ref{mt-lip}.
\begin{lemma}\label{key-l-2}
Fix a constant $\epsilon_0>0$. Let $X\neq Y\in \Teich(S_{g,n})$, $s=\dist_{wp}(X,Y)>0$ and $\gamma:[0,s]\to \Teich(S_{g,n})$ be the \wep geodesic with $\gamma(0)=X$ and $\gamma(s)=Y$. Then there exist a positive integer $k$, a partition $0=t_0<t_1<\cdots<t_{k-1}<t_k=s$ of the interval $[0,s]$, a sequence of closed intervals $\{[a_i,b_i]\subseteq [t_{i},t_{i+1}]\}_{0\leq i \leq k-1}$ and a sequence of essential simple closed curves $\{\alpha_{i}\}_{0\leq i \leq k-1}$ in $S_{g,n}$ such that for all $0\leq i \leq k-1$,\\
$(1). \ \alpha_i \neq \alpha_{i+1}.$\\
$(2). \ \ell_{\alpha_i}(\gamma(t))=\lsys(\gamma(t)), \quad \forall t_i\leq t \leq t_{i+1}.$\\
$(3). \ \gamma([0,s])\cap (\Teich(S_{g,n})-\Teich(S_{g,n})^{> \epsilon_0})=\displaystyle\cup_{0\leq i \leq k-1}\gamma([a_i,b_i]).$ 
\end{lemma}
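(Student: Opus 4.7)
My first move is to invoke Lemma \ref{key-l} on the \wep geodesic $\gamma:[0,s]\to\Teich(S_{g,n})$ to obtain a partition $0=t_0<t_1<\cdots<t_k=s$ together with a sequence $\{\alpha_i\}_{0\leq i\leq k-1}$ of essential simple closed curves satisfying conditions (1) and (2). These two conclusions are then already recorded for Lemma \ref{key-l-2}, so everything reduces to manufacturing, on each piece $[t_i,t_{i+1}]$, a single closed subinterval $[a_i,b_i]$ that captures precisely where $\gamma$ leaves the interior of the $\epsilon_0$-thick part.

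The decisive ingredient will be Wolpert's convexity theorem (Theorem \ref{conv}) applied to the real-analytic functions $f_i(t):=\ell_{\alpha_i}(\gamma(t))$. For each $i$, the function $f_i$ is strictly convex on $[t_i,t_{i+1}]$ because $\gamma$ is a \wep geodesic, and by condition (2) of Lemma \ref{key-l} it agrees with $\lsys\circ\gamma$ on that interval. To pass from $f_i$ to the thick part, I use that $\lsys(\cdot)$ is continuous, so $\{\lsys>\epsilon_0\}$ is open and lies inside $\Teich(S_{g,n})^{\geq\epsilon_0}$; hence
\[
\{\lsys>\epsilon_0\}\ \subseteq\ \Teich(S_{g,n})^{>\epsilon_0}\ \subseteq\ \{\lsys\geq\epsilon_0\}.
\]
Setting $K_i:=\{t\in[t_i,t_{i+1}]:\gamma(t)\notin\Teich(S_{g,n})^{>\epsilon_0}\}$ and taking complements along $\gamma$, I obtain the sandwich
\[
\{t\in[t_i,t_{i+1}]:f_i(t)<\epsilon_0\}\ \subseteq\ K_i\ \subseteq\ \{t\in[t_i,t_{i+1}]:f_i(t)\leq\epsilon_0\}.
\]

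By strict convexity of $f_i$, the outer (closed) sublevel set is a closed subinterval of $[t_i,t_{i+1}]$ — possibly empty, a singleton, or nondegenerate — whose relative interior equals the inner (open) sublevel set, since the two differ only at the at most two points where $f_i-\epsilon_0$ vanishes. Because $\Teich(S_{g,n})^{>\epsilon_0}$ is open, $K_i$ is closed; together with the sandwich this forces $K_i$ to be a closed subinterval $[a_i,b_i]\subseteq[t_i,t_{i+1}]$, where I adopt the convention that $[a_i,b_i]$ is empty when $K_i=\emptyset$. Taking the union over $i$ then gives conclusion (3), since $\gamma([0,s])\cap(\Teich(S_{g,n})-\Teich(S_{g,n})^{>\epsilon_0})$ is exactly $\bigcup_i\gamma(K_i)$.

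I do not anticipate a serious obstacle: once Lemma \ref{key-l} and Theorem \ref{conv} are available, the only genuinely new assertion is that the thin locus along $\gamma$ meets each piece $[t_i,t_{i+1}]$ in a \emph{connected} closed interval, and this connectedness is forced by strict convexity of the single geodesic length function $\ell_{\alpha_i}$ that realizes the systole on that piece. The one minor subtlety worth keeping track of is the mismatch between $\Teich(S_{g,n})^{>\epsilon_0}$ and $\{\lsys>\epsilon_0\}$ at points where $\lsys=\epsilon_0$, which is handled by the sandwich argument above.
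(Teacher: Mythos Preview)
Your proposal is correct and follows essentially the same route as the paper: invoke Lemma~\ref{key-l} for parts (1)--(2), then use Wolpert's strict convexity (Theorem~\ref{conv}) of $\ell_{\alpha_i}$ along $\gamma$ to conclude that the thin locus meets each $[t_i,t_{i+1}]$ in a single closed subinterval. The paper simply sets $[a_i,b_i]:=\{t\in[t_i,t_{i+1}]:\ell_{\alpha_i}(\gamma(t))\leq\epsilon_0\}$ and asserts part (3) directly; your sandwich argument handling the possible discrepancy between $\Teich(S_{g,n})^{>\epsilon_0}$ and $\{\lsys>\epsilon_0\}$ is a bit more careful than what the paper writes, but the underlying idea is identical.
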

\begin{proof}
First we apply Lemma \ref{key-l} to the \wep geodesic $\gamma([0,s])$. Then there exist a positive integer $k$, a partition $0=t_0<t_1<\cdots<t_{k-1}<t_k=s$ of the interval $[0,s]$ and a sequence of essential simple closed curves $\{\alpha_{i}\}_{0\leq i \leq k-1}$ in $S_{g,n}$ such that for all $0\leq i \leq k-1$ we have
\begin{eqnarray}\label{l-2-1}
\ell_{\alpha_i}(\gamma(t))=\lsys(\gamma(t)), \quad \forall t_i\leq t \leq t_{i+1}.
\end{eqnarray}

Thus, Part (1) and (2) follows. 

We apply Theorem \ref{conv} to the geodesic length function $$\ell_{\alpha_i}(\cdot): \gamma([t_i,t_{i+1}])\to \mathbb{R}^{+}$$ for all $0\leq i \leq k-1$. Since $\ell_{\alpha_i}(\cdot)$ is strictly convex on $\gamma([t_i,t_{i+1}])$ and $\gamma([0,s])\subset \Teich(S_{g,n})$, the maximal principle for a convex function gives that $\ell_{\alpha_i}^{-1}([0,\epsilon_0])$ is a closed connected subset in $\gamma([t_i,t_{i+1}])$, which is denoted by $\gamma([a_i,b_i])$ for some closed interval $[a_i,b_i] \subseteq [t_i,t_{i+1}]$ (note that $\gamma([a_i,b_i])$ may be just a single point or an empty set). Then Part (3) clearly follows from the choices of $a_i$ and $b_i$.
\end{proof}

\section{Uniformly Lipschitz}\label{sys-wg}
Recall that the systole function $\lsys(\cdot):\Teich(S_{g,n})\to \mathbb{R}^+$ is continuous and not smooth. The goal of this section is to prove Theorem \ref{mt-lip} which says that the square root of the systole function is uniformly Lipschitz continuous along \wep geodesics. The method in this section is influenced by \cite{Wolpert08}. For convenience we restate Theorem \ref{mt-lip} here.
  \begin{thm}
There exists a universal constant $K>0$, independent of $g$ and $n$, such that for all $X,Y\in \Teich(S_{g,n})$,
\[|\sqrt{\ell_{sys}(X)}-\sqrt{\ell_{sys}(Y)}|\leq K\dist_{wp}(X,Y).\]
  \end{thm}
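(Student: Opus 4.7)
My plan is to reduce the inequality to a pointwise uniform bound on the Weil--Petersson gradient of $\sqrt{\ell_\alpha}$ whenever $\alpha$ realizes the systole, and then to establish this bound by a thin--thick dichotomy along the geodesic. Fix a Margulis-type constant $\epsilon_0>0$. For $X\neq Y$ with $s=\dist_{wp}(X,Y)$, apply Lemma~\ref{key-l-2} to the WP geodesic $\gamma:[0,s]\to\Teich(S_{g,n})$ joining $X$ to $Y$, producing a partition $0=t_0<\cdots<t_k=s$, thin sub-intervals $[a_i,b_i]\subseteq[t_i,t_{i+1}]$, and systolic curves $\alpha_i$ with $\ell_{sys}(\gamma(t))=\ell_{\alpha_i}(\gamma(t))$ on $[t_i,t_{i+1}]$. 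By Claim~\ref{f-1} each piece $t\mapsto\sqrt{\ell_{\alpha_i}(\gamma(t))}$ is real analytic, and the pieces match continuously at each $t_i$ since both $\alpha_{i-1}$ and $\alpha_i$ realize the systole there. Telescoping via the fundamental theorem of calculus, taking absolute values, and applying Cauchy--Schwarz to $\frac{d}{dt}\sqrt{\ell_{\alpha_i}(\gamma(t))}=\langle\grad\sqrt{\ell_{\alpha_i}},\gamma'(t)\rangle_{WP}$ (using $\|\gamma'\|_{WP}=1$) one obtains
\[
\bigl|\sqrt{\ell_{sys}(Y)}-\sqrt{\ell_{sys}(X)}\bigr|\;\leq\;\sum_{i=0}^{k-1}\int_{t_i}^{t_{i+1}}\bigl\|\grad\sqrt{\ell_{\alpha_i}}(\gamma(t))\bigr\|_{WP}\,dt,
\]
so it suffices to exhibit a universal $K$ such that $\|\grad\sqrt{\ell_\alpha}(Z)\|_{WP}\leq K$ whenever $\alpha$ is a systolic curve of some $Z\in\Teich(S_{g,n})$.

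On the thin portion of $\gamma$, namely on each $[a_i,b_i]$ where $\ell_{\alpha_i}(\gamma(t))\leq\epsilon_0$, the required bound comes directly from Wolpert's Lemma~3.16 of~\cite{Wolpert08}: for short $\alpha$ the quantity $\|\grad\sqrt{\ell_\alpha}\|_{WP}^2$ is uniformly comparable to the universal constant $(2\pi)^{-1}$. The contribution of these intervals to the integral is then bounded by a universal multiple of $\sum_i(b_i-a_i)\leq s$. The complementary thick portion, where $\ell_{\alpha_i}(\gamma(t))\geq\epsilon_0$, is the main challenge, since by Buser--Sarnak the surface $Z=\gamma(t)$ may have injectivity radius as large as $\log g$ and both $g$ and $n$ may be arbitrarily large.

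For the thick case with $\alpha$ systolic and $\ell_\alpha(Z)\geq\epsilon_0$, my plan is to apply Riera's formula \cite{Rie05}, which expresses
\[
\|\grad\ell_\alpha(Z)\|_{WP}^{2}\;=\;\frac{2}{\pi}\Bigl(\ell_\alpha+\sum_{\langle\eta\rangle\neq\mathrm{id}}\bigl(u_\eta\log\tfrac{u_\eta+1}{u_\eta-1}-2\bigr)\Bigr),
\]
where $\eta$ ranges over nontrivial cosets of $\langle\alpha\rangle$ in $\pi_1(Z)$ and $u_\eta=\cosh d_{\H}(\tilde\alpha,\eta\tilde\alpha)$ for a fixed lift $\tilde\alpha\subset\H$. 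Since $\|\grad\sqrt{\ell_\alpha}\|_{WP}^{2}=\|\grad\ell_\alpha\|_{WP}^{2}/(4\ell_\alpha)$ and $\ell_\alpha\geq\epsilon_0$, it suffices to bound the Riera sum by a universal multiple of $\ell_\alpha$. Each summand decays like $e^{-2d_{\H}(\tilde\alpha,\eta\tilde\alpha)}$, which via (\ref{i-exp}) is comparable to the squared sine of an angle to the axis $\tilde\alpha$. Combining (a) the collar lemma applied to the systolic curve $\alpha$, which separates distinct lifts by the standard collar width, (b) the systolic property, preventing lifts of other simple closed geodesics from undercutting this separation and hence keeping the orbit $\{\eta\tilde\alpha\}$ suitably spread in $\H$, and (c) the mean-value property Lemma~\ref{mvp}, which converts point sums of $e^{-2\dist_{\H}(\,\cdot\,,\mathrm{axis})}$ into integrals over hyperbolic balls, one should majorize the Riera sum by a universal constant times the integral of the same exponential function over a fundamental domain of $Z$ in $\H$, which in turn collapses to a universal multiple of $\ell_\alpha$. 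I expect this final packing/majorization step to be the real difficulty: it is precisely the point at which the constant must be shown to be independent of $g$ and $n$, and the genuinely new input is the systolic condition, which is what rules out accumulation of the orbit $\{\eta\tilde\alpha\}$ near $\tilde\alpha$ in surfaces of arbitrarily large injectivity radius. Once this thick estimate is in hand, combining it with the thin estimate and integrating along $\gamma$ produces the desired universal Lipschitz constant $K$.
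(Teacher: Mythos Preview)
Your overall architecture matches the paper's exactly: thin--thick partition via Lemma~\ref{key-l-2}, Wolpert's Lemma~3.16 on the thin pieces, and Riera's formula together with the mean-value estimate Lemma~\ref{mvp} on the thick pieces, assembled by integrating along $\gamma$. The one substantive gap is in your sketch of the thick estimate.

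The separation of distinct lifts $\eta\tilde\alpha$ from $\tilde\alpha$ does \emph{not} come from the Collar Lemma. The collar half-width $\arcsinh\bigl(1/\sinh(\ell_\alpha/2)\bigr)$ decays to zero as $\ell_\alpha\to\infty$, so on surfaces of large injectivity radius (exactly the Buser--Sarnak regime you flag) it gives no usable lower bound; feeding that into Lemma~\ref{mvp} would make the constant $c(r)$ blow up with $g$. The paper instead proves directly (Lemma~\ref{3-l-1}) that if $\alpha$ is systolic and $Z\in\Teich(S_{g,n})^{\geq\epsilon_0}$ then any two lifts of $[\alpha]$ are at hyperbolic distance $\geq\epsilon_0/4$: a ball of radius $\epsilon_0/4$ about any point of $[\alpha]$ embeds in $Z$ and meets $[\alpha]$ in a single arc, by the systolic hypothesis. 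This is the genuine use of the systolic condition, and it is what makes the packing radius in the mean-value step uniform in $g,n$.

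Two smaller points to correct. First, the Riera sum is over nontrivial \emph{double} cosets $\langle A\rangle\backslash\Gamma/\langle A\rangle$, and one must choose representatives so that the foot-of-perpendicular points $p_B$ land in a fixed fundamental annulus for $\langle A\rangle$ (Lemma~\ref{3-l-2}); without this normalization the balls $B_{\H}(p_B;r)$ need not lie in a region of controlled size. Second, the resulting integral is over the annulus $\{e^{-\ell_\alpha}\leq|z|\leq e^{2\ell_\alpha}\}$, a fundamental domain for the cyclic group $\langle A\rangle$, not a fundamental domain for all of $\Gamma$; integrating $\sin^2\theta\,dA$ over this annulus is what produces the factor $\ell_\alpha$ you need.
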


We begin by outlining the idea of the proof.

For any \wep geodesic $\g(X,Y)\subset \Teich(S_{g,n})$ joining $X$ and $Y$ in $\Teich(S_{g,n})$, first we apply Lemma \ref{key-l-2} to make a thick-thin decomposition for the geodesic $\g(X,Y)$ such that both of the thick and thin parts are disjoint closed intervals with certain properties. Then we use different arguments for these two parts. For the thin part we will apply the following result due to Wolpert.
\begin{lemma}\cite[Lemma 3.16]{Wolpert08}\label{short}
There exists a universal constant $c>0$, independent of $g$ and $n$, such that for all $X\in \Teich(S_{g,n})$ and any essential simple closed curve $\alpha \subset S_{g,n}$,
\[\langle \grad \ell_{\alpha}, \grad \ell_{\alpha}\rangle_{wp}(X)\leq c \cdot (\ell_{\alpha}(X)+\ell_{\alpha}^2(X)e^{\frac{\ell_{\alpha}(X)}{2}}).\]
\end{lemma}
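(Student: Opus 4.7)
The approach will invoke Riera's formula for the Weil--Petersson pairing of gradients of geodesic length functions, a special case of which will appear as Equation~(\ref{Rie-f}) in this section. Applied in the diagonal case, it expresses $\langle \grad \ell_\alpha, \grad \ell_\alpha\rangle_{wp}(X)$ as a closed-form series indexed by double cosets of the cyclic subgroup $\langle\tilde\alpha\rangle\subset\pi_1(X)$, where $\tilde\alpha$ is a lift of $\alpha$ to $\mathbb{H}$. The trivial double coset produces a term proportional to $\ell_\alpha$, which directly accounts for the first summand in the stated bound. Every other term is a function of $u_\gamma$, half the hyperbolic distance between $\tilde\alpha$ and its translate $\gamma\tilde\alpha$, that decays with $u_\gamma$ at infinity.

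To control the non-trivial contributions, the plan is first to use an elementary asymptotic analysis to bound each summand by a universal multiple of $(1+u_\gamma^2)\,e^{-2u_\gamma}$. The sum over double cosets is then majorized by a Poincar\'e-type integral: each non-trivial $\gamma$ places a distinct lift of $\alpha$ in $\mathbb{H}$ at transverse distance roughly $2u_\gamma$ from $\tilde\alpha$, and summing over lifts is essentially integrating the weight $(1+d^2)\,e^{-2d}$ over a fundamental domain of $\langle\tilde\alpha\rangle$ acting on $\mathbb{H}$, where $d=\dist_\mathbb{H}(\cdot,\tilde\alpha)$. The passage from the discrete sum to the continuous integral is exactly where the mean value property recorded in Lemma~\ref{mvp} will intervene, applied to the weight $e^{-2\dist_\mathbb{H}(\cdot,\textbf{i}\mathbb{R}^+)}$ after normalizing the axis of $\tilde\alpha$ to the positive imaginary axis.

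The integral then splits naturally according to the standard collar about $\tilde\alpha$. Inside the collar the exponential weight is integrable against $dA$ and contributes at most a universal multiple of $\ell_\alpha$; combined with the trivial term this yields the linear part $\ell_\alpha$ of the bound. Outside the collar (which is thin when $\ell_\alpha$ is small and degenerates when $\ell_\alpha$ is large), the transverse integration runs across a slab of width up to $\ell_\alpha/2$ before encountering translates of $\tilde\alpha$, picking up at most a factor of order $e^{\ell_\alpha/2}$; multiplication by the longitudinal length $\ell_\alpha$ and by the remaining polynomial prefactor then produces the term $\ell_\alpha^2 e^{\ell_\alpha/2}$.

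The hard part will be keeping all constants universal in $g$, $n$, $X$ and $\alpha$, and obtaining precisely the exponent $1/2$ rather than something worse. The number of lifts of $\alpha$ within transverse distance $R$ of $\tilde\alpha$ may grow like $e^R$, so the decay $e^{-2u_\gamma}$ built into each summand must be played off against this geometric growth; getting the exponent $\ell_\alpha/2$ in the final answer requires a careful comparison of the collar width against the Poincar\'e series, using only universal inputs from two-dimensional hyperbolic geometry already assembled in Section~\ref{np}.
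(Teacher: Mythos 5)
First, a point of framing: the paper does not prove this lemma at all --- it is quoted verbatim as Lemma 3.16 of \cite{Wolpert08}, so there is no internal proof to compare against. The closest thing the paper contains is the proof of Proposition \ref{gl-qi}, which runs exactly the machine you describe (Riera's diagonal formula (\ref{Rie-f}), the mean value property of $e^{-2\dist_{\mathbb{H}}(\cdot,\textbf{i}\mathbb{R}^+)}$ from Lemma \ref{mvp}, disjoint balls around the nearest-point projections, and an explicit integral over an annular fundamental domain), but only under the extra hypotheses that $\alpha$ is systolic and $X$ is $\epsilon_0$-thick, which via Lemma \ref{3-l-1} give a \emph{uniform} separation of the lifts and hence a clean linear bound with no exponential factor. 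Your outline is therefore the right toolbox, and it is indeed the toolbox of Wolpert's original argument; but the entire content of the lemma as stated --- the term $\ell_\alpha^2 e^{\ell_\alpha/2}$ --- lives precisely in the regime your sketch leaves vague.

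Two concrete gaps. First, in Riera's formula $u=\cosh(\dist_{\mathbb{H}}(\tilde\alpha,E\circ\tilde\alpha))$, not half the distance, and the summand $u\ln\frac{u+1}{u-1}-2$ \emph{diverges logarithmically} as $u\to1^+$. For a general $X$ and a general simple $\alpha$ (no thickness or systolic hypothesis), distinct lifts of $\tilde\alpha$ can be arbitrarily close, so your claimed per-term bound of the shape $(1+u^2)e^{-2u}$ simply fails for the near cosets; those terms must be controlled separately, and the only universal input available is the collar lemma's separation $\sinh(w)\sinh(\ell_\alpha/2)=1$, which is where the dependence on $\ell_\alpha$ in the exponent actually originates. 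Second, the passage from the sum to the integral is where the exponent is won or lost, and your sketch asserts rather than derives the factor $e^{\ell_\alpha/2}$: if one runs the mean value property with balls of radius comparable to the collar half-width $w\asymp e^{-\ell_\alpha/2}$ (the largest radius for which disjointness of the balls is guaranteed), the constant $c(w)$ in Lemma \ref{mvp} grows like $w^{-2}\asymp e^{\ell_\alpha}$, and the argument of Proposition \ref{gl-qi} then yields only $c\,\ell_\alpha e^{\ell_\alpha}$, which is strictly weaker than the stated bound. Getting $e^{\ell_\alpha/2}$ requires a finer counting of double cosets by distance (or an averaging along the lifts rather than over two-dimensional balls), which is the actual work in Wolpert's Lemma 3.16 and is absent from the proposal. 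As a reconstruction of the cited proof the outline points in the right direction, but as a proof it does not yet close.
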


Fix a constant $k_0>0$, the lemma above implies that for all essential simple closed curve $\alpha \subset S_{g,n}$ with $\ell_{\alpha}\leq k_0$,
\[\langle \grad \ell_{\alpha}, \grad \ell_{\alpha}\rangle_{wp}(X)\leq C(k_0)\ell_{\alpha}\]
where $C(k_0)$ is a constant only depending on $k_0$.

Recall that the length $\ell_{\alpha}$ could be arbitrarily large for any essential simple closed curve $\alpha \subset S_{g,n}$ (Buser-Sarnak \cite{BS94} constructed hyperbolic surfaces whose injectivity radii grow roughly as $\ln{g}$), actually for the thick part of the geodesic $\g(X,Y)$, no matter how large the injectivity radius is, we will apply the following proposition, which is the main part of this section.
\begin{proposition}\label{lip-sys}
Fix a constant $\epsilon_0>0$. Then there exists a positive constant $C(\epsilon_0)$, only depending on $\epsilon_0$, such that for any $X, Y \in \Te(S_{g,n})$ with the \wep geodesic $\g(X,Y)\subset \Te(S_{g,n})^{\geq \epsilon_0}$, we have 
\[|\sqrt{\ell_{sys}(X)}-\sqrt{\ell_{sys}(Y)}|\leq C(\epsilon_0) \dist_{wp}(X,Y).\]
\end{proposition}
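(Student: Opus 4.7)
The plan is to reduce Proposition~\ref{lip-sys} to a pointwise bound on $\|\grad \ell_\alpha\|_{wp}/\sqrt{\ell_\alpha}$ for $\alpha$ a systolic curve on an $\epsilon_0$-thick surface, and then integrate along the \wep geodesic piecewise. First, apply Lemma~\ref{key-l} to the unit-speed \wep geodesic $\gamma:[0,s]\to \Te(S_{g,n})$ joining $X$ and $Y$, where $s=\dist_{wp}(X,Y)$. This yields a partition $0=t_0<t_1<\cdots<t_k=s$ and essential simple closed curves $\alpha_0,\dots,\alpha_{k-1}$ with $\ell_{\alpha_i}(\gamma(t))=\lsys(\gamma(t))$ on $[t_i,t_{i+1}]$; since $\gamma([0,s])\subset\Te(S_{g,n})^{\geq\epsilon_0}$, each $\ell_{\alpha_i}\geq\epsilon_0$ on its subinterval. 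By Claim~\ref{f-1}, $\ell_{\alpha_i}\circ\gamma$ is real analytic, so $\sqrt{\ell_{\alpha_i}\circ\gamma}$ is differentiable and the Cauchy--Schwarz inequality gives
$$\left|\frac{d}{dt}\sqrt{\ell_{\alpha_i}(\gamma(t))}\right|=\frac{|\langle \grad \ell_{\alpha_i}(\gamma(t)),\gamma'(t)\rangle_{wp}|}{2\sqrt{\ell_{\alpha_i}(\gamma(t))}}\leq \frac{\|\grad \ell_{\alpha_i}(\gamma(t))\|_{wp}}{2\sqrt{\ell_{\alpha_i}(\gamma(t))}}.$$
Because $\ell_{\alpha_{i-1}}(\gamma(t_i))=\ell_{\alpha_i}(\gamma(t_i))=\lsys(\gamma(t_i))$ at each breakpoint, telescoping together with the triangle inequality yields
$$|\sqrt{\lsys(X)}-\sqrt{\lsys(Y)}|\leq \sum_{i=0}^{k-1}\int_{t_i}^{t_{i+1}}\frac{\|\grad \ell_{\alpha_i}(\gamma(t))\|_{wp}}{2\sqrt{\ell_{\alpha_i}(\gamma(t))}}\,dt.$$

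The core of the argument is the uniform pointwise bound $\|\grad \ell_\alpha(X)\|_{wp}\leq C(\epsilon_0)\sqrt{\ell_\alpha(X)}$ whenever $X\in\Te(S_{g,n})^{\geq\epsilon_0}$ and $\alpha$ realizes the systole of $X$. Wolpert's estimate (Lemma~\ref{short}) alone is insufficient here, since on the thick part the systole can be arbitrarily large (as in the Buser--Sarnak examples), and Lemma~\ref{short} only becomes useful when $\ell_\alpha$ is bounded above. The plan is instead to invoke Riera's formula \cite{Rie05}, which expresses $\|\grad \ell_\alpha\|_{wp}^2$ as $\frac{2}{\pi}\ell_\alpha$ plus a sum, over nontrivial $\langle A\rangle$-$\langle A\rangle$ double cosets of the Fuchsian group, of explicit functions of the hyperbolic distances between the axis of $A$ and its translates. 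The decisive geometric input is that, because $\alpha$ is systolic with $\ell_\alpha\geq\epsilon_0$, every such translated axis must sit at a uniformly bounded-below hyperbolic distance from the axis of $A$: otherwise the quotient would contain a shorter essential curve. Using the collar lemma in tandem with the mean-value estimate of Lemma~\ref{mvp} to bound the sum of the resulting $e^{-2\,\dist}$-type contributions uniformly in $g$ and $n$, one obtains $\|\grad \ell_\alpha\|_{wp}^2\leq c_1(\epsilon_0)\,\ell_\alpha$.

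Inserting this uniform bound into the telescoping estimate gives
$$|\sqrt{\lsys(X)}-\sqrt{\lsys(Y)}|\leq \frac{\sqrt{c_1(\epsilon_0)}}{2}\sum_{i=0}^{k-1}(t_{i+1}-t_i)=\frac{\sqrt{c_1(\epsilon_0)}}{2}\,\dist_{wp}(X,Y),$$
which is the desired Lipschitz estimate with $C(\epsilon_0)=\frac{1}{2}\sqrt{c_1(\epsilon_0)}$. The main obstacle is clearly the second step: producing the uniform pointwise bound on $\|\grad \ell_\alpha\|_{wp}/\sqrt{\ell_\alpha}$ for systolic $\alpha$ by means of a careful two-dimensional hyperbolic-geometry analysis of the Riera sum, on surfaces where the injectivity radius is only known to be at least $\epsilon_0$ and may well be very large. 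The remaining steps—the piecewise structure from Lemma~\ref{key-l}, the real-analytic differentiation on each subinterval, and the final telescoping—follow directly from the tools already assembled earlier in the paper.
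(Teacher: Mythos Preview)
Your proposal is correct and follows essentially the same route as the paper: apply Lemma~\ref{key-l} to partition the geodesic, reduce to the pointwise bound $\langle\grad\ell_\alpha,\grad\ell_\alpha\rangle_{wp}\leq D(\epsilon_0)\,\ell_\alpha$ for systolic $\alpha$ (this is Proposition~\ref{gl-qi}), prove that bound via Riera's formula together with uniform separation of the lifted axes and the mean-value estimate of Lemma~\ref{mvp}, and then telescope and integrate. One small inaccuracy: the uniform separation of lifts (Lemma~\ref{3-l-1}) is obtained not from the collar lemma but from a direct injectivity-radius argument using that $\alpha$ is systolic; also, the paper needs one extra ingredient you do not name, namely choosing double-coset representatives so that the nearest-projection points land in a fixed fundamental annulus (Lemma~\ref{3-l-2}), which is what makes the integral of $e^{-2\dist}$ over the relevant region comparable to $\ell_\alpha$.
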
\

For any essential simple closed curve $\alpha\subset S_{g,n}$, the geodesic length function $\ell_{\alpha}(\cdot)$ is real-analytic over $\Teich(S_{g,n})$. Gardiner in \cite{Gar75, Gar86} provided formulas for the differentials of $\ell_{\alpha}$. Let $(X, \sigma(z)|dz|^2) \in \Teich(S_{g,n})$ be a hyperbolic surface and $\Gamma$ be its associated Fuchsian group. Since $\alpha$ is an essential simple closed curve, we may denote by $A$ be the deck transformation on the upper half plane $\mathbb{H}$ corresponding to the simple closed geodesic $[\alpha]\subset X$. Consider the quadratic differential
\begin{equation}
\Theta_{\alpha}(z)=\displaystyle \sum_{E\in <A>/\Gamma} \frac{E'(z)^2}{E(z)^2}dz^2
\end{equation}
where $<A>$ is the cyclic group generated by $A$. 

Then the gradient $\grad \ell_{\alpha}(\cdot)$ of the geodesic length function $\ell_{\alpha}$ is 
\[\grad \ell_{\alpha}(X)(z)=\frac{2}{\pi}\frac{\overline{\Theta}_{\alpha}(z)}{\rho(z)|dz|^2} \]
where $\rho(z)|dz|^2$ is the hyperbolic metric on the upper half plane. The tangent vector $t_{\alpha}=\frac{\textbf{i}}{2}\grad \ell_{\alpha}$ is the infinitesimal Fenchel-Nielsen right twist deformation \cite{Wolpert82}. 

In \cite{Rie05} Riera provided a formula for the \wep inner product of a pair of geodesic length gradients. Let $\alpha, \beta \subset X$ be two essential simple closed curves with $A, B\in \Gamma$ be its associated deck transformations with axes $\tilde{\alpha}, \tilde{\beta} $ on the upper half plane. Riera's formula \cite[Theorem 2]{Rie05} says that
\begin{eqnarray*} 
\langle\,\grad \ell_{\alpha},\grad \ell_{\beta}\rangle_{wp}(X)=\frac{2}{\pi}(\ell_{\alpha}\delta_{\alpha \beta}+\displaystyle \sum_{E\in <A>\backslash \Gamma/<B>} (u \ln{|\frac{u+1}{u-1}|}-2))
\end{eqnarray*}
for the Kronecker delta $\delta_{\cdot}$, where $u=u(\tilde{\alpha}, E\circ \tilde{\beta})$ is the cosine of the intersection angle if $\tilde{\alpha}$ and $E\circ \tilde{\beta}$ intersect and is otherwise $\cosh{(\dist_{\mathbb{H}}(\tilde{\alpha}, E\circ \tilde{\beta}))}$ where $\dist_{\mathbb{H}}(\tilde{\alpha}, E\circ \tilde{\beta})$ is the hyperbolic distance between the two geodesic lines. Riera's formula was applied in \cite{Wolpert08} to study \wep gradient of simple closed curves of short lengths. In this paper we will use Riera's formula to study the systolic curves which may have large lengths. 

In particular setting $\alpha=\beta$ in Riera's formula, then we have
\begin{equation}\label{Rie-f}
\langle\,\grad \ell_{\alpha},\grad \ell_{\alpha}\rangle_{wp}(X)=\frac{2}{\pi}(\ell_{\alpha}+\displaystyle \sum_{E\in \{<A>\backslash\Gamma/<A>-id\}} (u \ln{\frac{u+1}{u-1}}-2))
\end{equation}
where $u=\cosh{(\dist_{\mathbb{H}}(\tilde{\alpha}, E\circ \tilde{\alpha}))}$ and the double-coset of the identity element is omitted from the sum. We can view the formula above as a function on essential simple closed curves in $S_{g,n}$. In this section, we will evaluate this function at $\alpha\in \ssys(X)$ and make estimates to prove the following result, which is essential in the proof of Proposition \ref{lip-sys}.

\begin{proposition}\label{gl-qi}
Fix a constant $\epsilon_0>0$. Then there exists a positive constant $D(\epsilon_0)$, only depending on $\epsilon_0$, such that for any $X\in \Teich(S_{g,n})^{\geq \epsilon_0}$ and any systolic curve $\alpha \in \ssys (X)$ we have
\[\langle \grad \ell_{\alpha},\grad \ell_{\alpha}\rangle_{wp}(X)\leq D(\epsilon_0)\cdot \ell_{\alpha}(X).\]
\end{proposition}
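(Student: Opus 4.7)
The plan is to estimate Riera's formula (\ref{Rie-f}) term by term via the exponential decay in distance between lifts. For each non-trivial double coset $E$, set $d_E := \dist_\mathbb{H}(\tilde{\alpha}, E\circ\tilde{\alpha})$ and $u := \cosh d_E$. The Taylor expansion $u\ln\tfrac{u+1}{u-1} - 2 = 2\sum_{k\geq 1} \tfrac{1}{(2k+1)u^{2k}}$ yields
\[0 \;\leq\; u\ln\tfrac{u+1}{u-1} - 2 \;\leq\; \tfrac{C}{u^2} \;\leq\; C'e^{-2 d_E}\]
uniformly, provided $d_E$ stays bounded away from $0$. Combined with the diagonal term $\tfrac{2}{\pi}\ell_\alpha(X)$ already present in (\ref{Rie-f}), it then suffices to prove
\[\sum_{E\,\neq\,\mathrm{id}} e^{-2 d_E} \;\leq\; C(\epsilon_0)\,\ell_\alpha(X).\]

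The crucial use of both hypotheses is a uniform lower bound $d_E \geq c(\epsilon_0) > 0$. Each non-trivial double coset produces a common perpendicular arc $\beta_E \subset X$ of length $d_E$ joining two points $p,q$ on $[\alpha]$ orthogonally. Concatenating $\beta_E$ with the shorter of the two subarcs of $[\alpha]$ from $p$ to $q$ (of length at most $\ell_\alpha/2$) yields a closed loop $L$. By Gauss--Bonnet, $L$ cannot bound an embedded disk, since the putative bigon would have two right-angle corners, forcing $\int K\,dA = \pi > 0$ against $K\equiv -1$. If $L$ is essential its geodesic representative is a closed geodesic, so systolicity gives $\ell(L) \geq \lsys(X) = \ell_\alpha$, whence $d_E \geq \ell_\alpha/2 \geq \epsilon_0/2$. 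If $L$ is peripheral to a cusp, a direct computation in standard cusp coordinates shows that the shortest orthogeodesic from $[\alpha]$ to $[\alpha]$ winding around a cusp has length bounded below by a constant depending only on $\epsilon_0$, because $[\alpha]$ is a closed geodesic and must stay outside the thin horocyclic neighborhoods of every cusp.

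The remainder is a mean-value property argument in the spirit of \cite{Wolpert08}. In the upper half-plane model with $\tilde{\alpha} = \mathbf{i}\mathbb{R}^+$, choose for each double coset the foot $z_E$ of the common perpendicular on $E\tilde{\alpha}$, normalized by $\langle A\rangle$ to lie in the fundamental strip $\{1 \leq |z| < e^{\ell_\alpha}\}$. Inequality (\ref{i-exp}) gives $e^{-2 d_E} \asymp \sin^2(\arg z_E)$, and this function satisfies the mean value property by Lemma \ref{mvp}. The lower bound on $d_E$ together with the injectivity radius bound $\geq \ell_\alpha/2 \geq \epsilon_0/2$ on the annular cover $\langle A\rangle\backslash\mathbb{H}$ gives uniformly bounded overlap multiplicity $N(\epsilon_0)$ for the balls $B_\mathbb{H}(z_E; r(\epsilon_0))$, so
\[\sum_{E\,\neq\,\mathrm{id}} e^{-2 d_E} \;\leq\; C(\epsilon_0)\int_{F_A} e^{-2\dist_\mathbb{H}(z,\tilde{\alpha})}\,dA(z)\]
for any fundamental domain $F_A$ of $\langle A\rangle$. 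Passing to Fermi coordinates $(\rho,\theta)$ on $\langle A\rangle\backslash\mathbb{H}$ with metric $d\rho^2+\cosh^2\rho\,d\theta^2$ and $\theta\in[0,\ell_\alpha)$, this integral evaluates to $\ell_\alpha\int_{-\infty}^{\infty} e^{-2|\rho|}\cosh\rho\,d\rho = \tfrac{4}{3}\ell_\alpha$, which combined with the earlier reductions yields the desired bound. The principal obstacle is Step 2, the lower bound $d_E \geq c(\epsilon_0)$: while the essential case follows cleanly from systolicity, the peripheral case requires a separate cusp-region estimate exploiting that $[\alpha]$ remains in the thick part.
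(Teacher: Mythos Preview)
Your overall architecture matches the paper's proof: Riera's formula, a uniform lower bound on the distances $d_E$ between distinct lifts of $[\alpha]$, Lemma~\ref{mvp} to convert the sum $\sum e^{-2d_E}$ into an integral over a fundamental strip for $\langle A\rangle$, and then the elementary computation showing that integral is $O(\ell_\alpha)$. The normalization of the feet $z_E$ into the strip $\{1\leq |z|\leq e^{\ell_\alpha}\}$ is exactly Lemma~\ref{3-l-2}.

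The substantive difference is in how you obtain the separation $d_E\geq c(\epsilon_0)$, and this is where your argument has a gap. You build the loop $L=\beta_E\cup(\text{arc of }[\alpha])$ and split into essential versus peripheral; you yourself flag the peripheral case as unfinished. The paper avoids this case analysis entirely via Lemma~\ref{3-l-1}, which is a one-paragraph closing-lemma argument: since $\alpha$ is systolic and $\ell_\alpha\geq\epsilon_0$, the ball $B_X(m;\epsilon_0/4)$ about any $m\in[\alpha]$ is embedded and meets $[\alpha]$ in a single arc; if two lifts of $[\alpha]$ came within $\epsilon_0/4$ of each other, the covering map would fail to be injective on such a ball. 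This gives $d_E\geq\epsilon_0/4$ directly, with no reference to cusps.

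A second, smaller issue: your justification for bounded overlap of the balls $B_\mathbb{H}(z_E;r(\epsilon_0))$ via ``injectivity radius on the annular cover'' is not the right mechanism. The injectivity radius of $\langle A\rangle\backslash\mathbb{H}$ controls loops around the core, not separation of the points $z_E$ from one another. What is actually needed is that for distinct double cosets $E_1,E_2$ the lifts $E_1\tilde\alpha$ and $E_2\tilde\alpha$ are distinct, hence (again by Lemma~\ref{3-l-1} applied to $E_1^{-1}E_2$) at distance $\geq\epsilon_0/4$; taking $r(\epsilon_0)=\epsilon_0/8$ then makes the balls genuinely \emph{disjoint}, not merely of bounded overlap. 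Once you replace your concatenation argument by this closing lemma, both gaps close simultaneously and your proof becomes the paper's.
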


\begin{remark}
From Riera's formula it is clear that 
\[\langle \grad \ell_{\alpha},\grad \ell_{\alpha}\rangle_{wp}(X)\geq \ell_{\alpha}(X).\]
Thus, $\langle \grad \ell_{\alpha},\grad \ell_{\alpha}\rangle_{wp}(X)$ is comparable to $ \ell_{\alpha}(X)$ under the same conditions as in Proposition \ref{gl-qi}.
\end{remark}

Before we prove Proposition \ref{gl-qi}, let's set up some notations and provide two lemmas. 

As stated above, we let $X\in \Teich(S_{g,n})$ be a hyperbolic surface and $\alpha \subset X$ be an essential simple closed curve. Up to conjugacy, we may assume that the closed geodesic $[\alpha]$ corresponds to the deck transformation $A: z\to e^{\ell_{\alpha}}\cdot z$ with axis $\tilde{\alpha}=\textbf{i}\mathbb{R}^+$ which is the imaginary axis and the fundamental domain $\mathbb{A}=\{z\in \mathbb{H}; 1\leq |z|\leq e^{\ell_{\alpha}}\}$. Let $\gamma_1, \gamma_2$ be two geodesic lines in $\mathbb{H}$. The distance $\dist_{\mathbb{H}}(\gamma_1,\gamma_2)$ is given by 
\[\dist_{\mathbb{H}}(\gamma_1,\gamma_2)=\inf_{p\in \gamma_1}\dist_{\mathbb{H}}(p, \gamma_2).\]

The following lemma says that any two lifts of the closed geodesic $[\alpha]$ in the upper half plane are uniformly separated. More precisely,
\begin{lemma}\label{3-l-1}
Fix a constant $\epsilon_0>0$. Then there exists a constant $C_{0}(\epsilon_0)>0$, only depending on $\epsilon_0$, such that for any $X\in \Teich(S_{g,n})^{\geq \epsilon_0}$, $\alpha \in \ssys(X)$ and all $B\in \{<A>\backslash\Gamma-id\}$ we have
\[\dist_{\mathbb{H}}(\tilde{\alpha},B\circ \tilde{\alpha})\geq \frac{\epsilon_0}{4}.\]
\end{lemma}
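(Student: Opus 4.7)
The plan is to argue by contradiction and exhibit a simple closed curve on $X$ shorter than the systole $\ell_\alpha$. Suppose $d := \dist_{\mathbb{H}}(\tilde{\alpha}, B \circ \tilde{\alpha}) < \epsilon_0/4$ for some $B \in \{\langle A \rangle \backslash \Gamma - \mathrm{id}\}$. Because $[\alpha]$ is a simple closed geodesic on $X$, any two distinct lifts of $[\alpha]$ in $\mathbb{H}$ are disjoint, so $\tilde{\alpha}$ and $B \circ \tilde{\alpha}$ admit a unique common perpendicular geodesic segment in $\mathbb{H}$ of length $d$ from some $\tilde{p} \in \tilde{\alpha}$ to $\tilde{q} \in B \circ \tilde{\alpha}$. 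Projecting to $X$ produces a geodesic arc $\eta$ of length $d$ whose endpoints $p, q$ lie on $[\alpha]$ and which meets $[\alpha]$ orthogonally at both endpoints. Since $\lsys(X) \geq \epsilon_0$ forces the injectivity radius to be at least $\epsilon_0/2 > d$, the arc $\eta$ is embedded and in particular $p \neq q$.

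I would then manufacture a short essential simple closed curve from $\eta$ and a sub-arc of $[\alpha]$. If $\eta$ meets $[\alpha]$ at some interior point $r$, replace $\eta$ by the sub-arc from $p$ to the first such $r$; this only shortens $\eta$ while preserving perpendicularity at $p$ and endpoints on $[\alpha]$. I may thus assume $\eta \cap [\alpha] = \{p, q\}$. Let $\alpha_1, \alpha_2$ be the two sub-arcs of $[\alpha]$ joining $p$ to $q$, so $\ell(\alpha_1) + \ell(\alpha_2) = \ell_\alpha$, and set $\gamma_i := \eta \cup \alpha_i$. Each $\gamma_i$ is a simple closed curve on $X$, and after relabeling,
\[
\ell(\gamma_1) \leq d + \frac{\ell_\alpha}{2} < \frac{\epsilon_0}{4} + \frac{\ell_\alpha}{2} \leq \frac{3\ell_\alpha}{4} < \ell_\alpha.
\]
To see $\gamma_1$ is essential, view it as the based loop $\eta \ast \alpha_1^{-1}$: it is null-homotopic exactly when $\eta$ and $\alpha_1$ are homotopic rel endpoints, which by uniqueness of geodesic arcs in a fixed relative homotopy class would force $\eta = \alpha_1$; but $\eta$ is perpendicular to $[\alpha]$ at $p$ whereas $\alpha_1$ is tangent, so they differ. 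The geodesic representative of $\gamma_1$ is then a simple closed geodesic of length at most $\ell(\gamma_1) < \ell_\alpha$, contradicting $\alpha \in \ssys(X)$.

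The hard part will be the case of punctures: on $S_{g,n}$ with cusps, it is a priori possible that both $\gamma_1$ and $\gamma_2$ are peripheral around distinct cusps, in which case they have no simple closed geodesic representative and do not count toward the systole. In that situation each $\gamma_i$ bounds a once-punctured disk $D_i$ in $X$ whose two corners at $p, q$ are right angles, so Gauss-Bonnet yields $\Area(D_i) = \pi$. The hyperbolic cusp isoperimetric inequality then gives $\ell(\gamma_i) \geq \pi$, forcing $\ell_\alpha \geq 2\pi - 2d$. Combining this constraint with the collar-lemma bound $d \geq 2 \arcsinh(1/\sinh(\ell_\alpha/2))$ and exploiting the rigid geometry of thin two-punctured disks with long boundary yields a contradiction with $d < \epsilon_0/4$. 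Treating this peripheral sub-case uniformly with the main argument above is the delicate endgame of the proof.
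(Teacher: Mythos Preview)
Your approach is correct in its main line but is considerably more elaborate than the paper's, and the peripheral sub-case you flag is genuinely left unfinished.  The paper does \emph{not} construct any auxiliary curve at all.  Instead it observes that, since $\alpha$ is systolic and $X$ lies in the $\epsilon_0$-thick part, the covering map $\pi$ restricted to the ball $B_{\mathbb H}(p;\epsilon_0/4)$ (for any $p\in\tilde\alpha$) is an isometric embedding, and the image $\pi\big(B_{\mathbb H}(p;\epsilon_0/4)\cap\tilde\alpha\big)$ equals $[\alpha]\cap B_X(\pi(p);\epsilon_0/4)$.  If some $q\in B\circ\tilde\alpha$ satisfied $\dist_{\mathbb H}(p,q)<\epsilon_0/4$, then $\pi(q)$ would lie in $[\alpha]\cap B_X(\pi(p);\epsilon_0/4)$ while $q\notin\tilde\alpha$, forcing $\pi$ to identify two distinct points of the ball --- a contradiction.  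That is the entire argument: no cut-and-paste, no length comparison, no case analysis on peripherality.

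By contrast, your surgery $\gamma_i=\eta\cup\alpha_i$ works cleanly only when at least one $\gamma_i$ is essential, and you correctly identify that on punctured surfaces both $\gamma_1$ and $\gamma_2$ can be peripheral (this occurs, for instance, when $\eta$ lands in a twice-cusped pair of pants bounded by $[\alpha]$).  Your sketch for that case --- Gauss--Bonnet plus an unspecified ``cusp isoperimetric inequality'' plus a collar bound --- is not a proof: the displayed inequalities do not close up to contradict $d<\epsilon_0/4$, and you say as much.  A smaller issue: the sentence ``$\lsys(X)\geq\epsilon_0$ forces the injectivity radius to be at least $\epsilon_0/2$'' is false on surfaces with cusps; what you actually need (and what the paper uses) is only that the injectivity radius is at least $\epsilon_0/4$ at points \emph{on} the systolic geodesic $[\alpha]$, which follows because $[\alpha]$ avoids the standard cusp neighborhoods.

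In short, the paper's local injectivity argument is both shorter and uniform across the closed and punctured cases; adopting it would let you drop the peripheral endgame entirely.
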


\begin{proof}
The proof follows from a standard argument in Riemannian geometry (the so-called closing lemma). Since $X\in \Teich(S_{g,n})^{\geq \epsilon_0}$ and $\alpha \in \ssys(X)$, for every point $m\in [\alpha]$, the closed geodesic in $X$ representing $\alpha$, we have the geodesic ball $B_{X}(m; \frac{\epsilon_0}{4})\subset X$, of radius $\frac{\epsilon_0}{4}$ centered at $m$, is isometric to a hyperbolic geodesic ball of radius $\frac{\epsilon_0}{4}$ in $\mathbb{H}$. Since $[\alpha]$ is a systolic curve and $X\in \Teich(S_{g,n})^{\geq \epsilon_0}$, the intersection $[\alpha]\cap B_{X}(m;\frac{\epsilon_0}{4})$ is a geodesic arc of length $\frac{\epsilon_0}{2}$ with the midpoint $m$.\\

Claim: $\dist_{\mathbb{H}}(\tilde{\alpha},B\circ \tilde{\alpha})\geq C_{0}(\epsilon_0)$ for all $B\in \{<A>\backslash\Gamma-id\}$.\\

We argue by contradiction for the proof of the claim. Suppose it does not hold. Then we let $p\in \tilde{\alpha}$ and $q \in B\circ \tilde{\alpha}$ such that 
\begin{eqnarray}\label{3-1}
\dist_{\mathbb{H}}(p,q)<\frac{\epsilon_0}{4}.
\end{eqnarray}

Let $B_{\mathbb{H}}(p;\frac{\epsilon_0}{4})\subset \mathbb{H}$ be the geodesic ball centered at $p$ of radius $\frac{\epsilon_0}{4}$. It is clear that the covering map 
\[\pi: B_{\mathbb{H}}(p;\frac{\epsilon_0}{4}) \to X\] 
is an isometric embedding. Thus,
\begin{eqnarray}\label{3-2}
\pi (B_{\mathbb{H}}(p;\frac{\epsilon_0}{4}) \cap \tilde{\alpha})=[\alpha]\cap B_{X}(\pi(p);\frac{\epsilon_0}{4})
\end{eqnarray}

Since the two geodesic lines $\tilde{\alpha}$ and $B\circ \tilde{\alpha}$ are disjoint, by inequality (\ref{3-1}) we know that $q \in B_{\mathbb{H}}(p;\frac{\epsilon_0}{4})-B_{\mathbb{H}}(p;\frac{\epsilon_0}{4}) \cap \tilde{\alpha}$. Since $q \in  B\circ \tilde{\alpha}$, 
\[\pi(q) \in [\alpha]\cap B_{X}(\pi(p);\frac{\epsilon_0}{4}) \]
which, together with Equation (\ref{3-2}), implies that the covering map $\pi: B_{\mathbb{H}}(p;\frac{\epsilon_0}{4}) \to X$ is not injective, which is a contradiction. 
\end{proof}

\begin{remark}
The condition $\alpha \in \ssys(X)$ is essential in Lemma \ref{3-l-1}. Otherwise, the estimate above may fail if one think about that case that the intersection of $[\alpha]$ with a geodesic ball of small radius is not connected.
\end{remark}

Recall that the axis $\tilde{\alpha}$ of the closed geodesic $[\alpha]\subset X$ in the upper half plane is the imaginary axis $\textbf{i}\mathbb{R}^{+}$. Let $B\in \{<A>\backslash\Gamma/<A>-id\}$. It is clear that the two geodesic lines $B\circ (\textbf{i}\mathbb{R}^{+})$ and $\textbf{i}\mathbb{R}^{+}$ are disjoint, and have disjoint boundary points at infinity. Since the distance function between two convex subsets in $\mathbb{H}$ is strictly convex (one may see \cite[Page 176]{BH} in a more general setting), there exists a unique point $p_B \in B\circ(\textbf{i}\mathbb{R}^{+})$ such that 
\[\dist_{\mathbb{H}}(p_B, \textbf{i}\mathbb{R}^{+})=\dist_{\mathbb{H}}(B\circ (\textbf{i}\mathbb{R}^{+}),\textbf{i}\mathbb{R}^{+}).\]

The goal of the following lemma is to study the position of the nearest projection point $p_B$ in $\mathbb{H}$.
\begin{lemma}\label{3-l-2}
Let $B\in \{<A>\backslash\Gamma/<A>-id\}$. Then there exists a representative $B' \in <A>\backslash\Gamma$ for $B$ such that 
\[1\leq r_{B'}\leq e^{\ell_{\alpha}}\]
where $p_{B'}=(r_{B'},\theta_{B'})$ in polar coordinate be the nearest projection point on $ B'\circ (\textbf{i}\mathbb{R}^{+})$ from $\textbf{i}\mathbb{R}^{+}$.
\end{lemma}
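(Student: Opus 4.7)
The plan is to exploit the fact that left multiplication of the representative by powers of $A$ translates the candidate geodesic $B\circ\tilde\alpha$ along the axis $\tilde\alpha$. First I would recall that, since $A(z)=e^{\ell_\alpha}z$ is a hyperbolic translation fixing $\tilde\alpha=\textbf{i}\mathbb{R}^{+}$ setwise, its action in polar coordinates is $(r,\theta)\mapsto(e^{\ell_\alpha}r,\theta)$, so $<A>$ preserves every angular ray from the origin and rescales the radial coordinate multiplicatively by $e^{\ell_\alpha}$.

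Next, I would fix an arbitrary element $B_0\in\Gamma$ representing the double coset $B$ and examine the family $B_k:=A^k B_0$ for $k\in\mathbb{Z}$. Since $A^k\in <A>$, every $B_k$ still represents $B$. The associated geodesic is $B_k\circ\tilde\alpha=A^k\bigl(B_0\circ\tilde\alpha\bigr)$, the $A^k$-translate of $B_0\circ\tilde\alpha$. Because $A^k$ is an isometry of $\mathbb{H}$ that fixes $\tilde\alpha$ setwise, we obtain $\dist_{\mathbb{H}}(\tilde\alpha,B_k\circ\tilde\alpha)=\dist_{\mathbb{H}}(\tilde\alpha,B_0\circ\tilde\alpha)$. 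By strict convexity of the distance function between two disjoint geodesic lines in $\mathbb{H}$ (already noted in the paragraph preceding the lemma), the foot of perpendicular is unique, and consequently $p_{B_k}=A^k\cdot p_{B_0}$.

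Writing $p_{B_0}=(r_0,\theta_0)$ in polar coordinates, this yields $p_{B_k}=(e^{k\ell_\alpha}r_0,\theta_0)$, so the angular coordinate is preserved while the radial coordinate is multiplied by $e^{k\ell_\alpha}$. Since the discrete orbit $\{e^{k\ell_\alpha}r_0\}_{k\in\mathbb{Z}}$ meets the fundamental interval $[1,e^{\ell_\alpha}]$ in a unique point, there exists $k_0\in\mathbb{Z}$ with $e^{k_0\ell_\alpha}r_0\in[1,e^{\ell_\alpha}]$. Setting $B':=A^{k_0}B_0$ produces the desired representative with $r_{B'}\in[1,e^{\ell_\alpha}]$.

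This is essentially a bookkeeping lemma that normalizes the representative within a fundamental domain for the left $<A>$-action; I do not foresee any real obstacle. Its role in what follows is simply to pin down a canonical lift so that the Riera-sum estimates in the proof of Proposition \ref{gl-qi} can be carried out with a definite location for each $p_{B}$.
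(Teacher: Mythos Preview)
Your proposal is correct and follows essentially the same approach as the paper: left-multiply the chosen representative by a suitable power $A^{k}$ so that the radial coordinate of the (unique) foot of perpendicular lands in the fundamental interval $[1,e^{\ell_\alpha}]$. If anything, your version is slightly more explicit than the paper's in justifying why $p_{B_k}=A^{k}\cdot p_{B_0}$ via uniqueness of the nearest point and the fact that $A^{k}$ is an isometry fixing $\tilde\alpha$.
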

\begin{proof}
Recall that the fundamental domain of $A$, the deck transformation corresponding to $[\alpha]$, is $\mathbb{A}=\{z\in \mathbb{H}; 1\leq |z|\leq e^{\ell_{\alpha}}\}$. For any $B\in \{<A>\backslash\Gamma-id\}$, the map $B:\mathbb{A}\to \mathbb{A}$ is biholomorphic. Let $p_B=(r_B,\theta_B)$ in polar coordinates be the nearest projection point on $ B\circ (\textbf{i}\mathbb{R}^{+})$ from $\textbf{i}\mathbb{R}^{+}$.

Case (1): $1\leq r_B \leq e^{\ell_{\alpha}}$. Then we are done by choosing $B'=B$.

Case (2): $0<r_B<1$ or $r_B > e^{\ell_{\alpha}}$. First there exists an integer $k$ such that $$A^k \circ r_B \in \{(r, \theta)\in \mathbb{H};1\leq r\leq  e^{\ell_{\alpha}}\}.$$
Choose $B'=A^k\cdot B$. Then $B'=B \in \{<A>\backslash\Gamma/<A>-id\}$ by the definition of double-cosets. Since $B'=A^k\cdot B$ and $A^k$ acts on $\textbf{i}\mathbb{R}^{+}$ by isometries, 
\[\dist_{\mathbb{H}}(\textbf{i}\mathbb{R}^{+},B'\circ(\textbf{i}\mathbb{R}^{+}))=\dist_{\mathbb{H}}(\textbf{i}\mathbb{R}^{+},A^k \circ r_B).\]
Let $p_{B'}=(r_{B'},\theta_{B'})$ in polar coordinates be the nearest point projection on $ B'\circ (\textbf{i}\mathbb{R}^{+})$ from $\textbf{i}\mathbb{R}^{+}$. Then we have $1\leq r_{B'}\leq e^{\ell_{\alpha}}$.
\end{proof}

Recall that in Riera's formula (see Equation (\ref{Rie-f})) the function $(u \ln{\frac{u+1}{u-1}}-2)$ satisfies 
\[\lim_{u\to \infty} \frac{u \ln{\frac{u+1}{u-1}}-2}{u^{-2}}=\frac{2}{3}.\]
From Lemma \ref{3-l-2} we know that the quantity $u$ in Equation (\ref{Rie-f}) satisfies 
\[u\geq \cosh{(\frac{\epsilon_0}{4})}>1\]
provided that $X\in \Teich(S_{g,n})^{\geq \epsilon_0}$ and $\alpha \in \ssys(X)$. Thus, there exists a positive constant $C_{2}(\epsilon_0)$, depending  only on $\epsilon_0$, such that 
\begin{eqnarray}\label{u-u}
(u \ln{\frac{u+1}{u-1}}-2)\leq C_{2}(\epsilon_0) \cdot u^{-2}.
\end{eqnarray}

Now we are ready to prove Proposition \ref{gl-qi}.
\begin{proof}[Proof of Proposition \ref{gl-qi}]
We will apply Equation (\ref{Rie-f}) to finish the proof.

First from Equations (\ref{Rie-f}) and (\ref{u-u}) we have
\begin{eqnarray}
\langle \grad \ell_{\alpha},\grad \ell_{\alpha}\rangle_{wp}(X)&\leq& \frac{2}{\pi}(\ell_{\alpha}+C_{2}(\epsilon_0) \displaystyle \sum_{B\in \{<A>\backslash\Gamma/<A>-id\}} e^{-2\dist_{\mathbb{H}}(\textbf{i}\mathbb{R}^+,B\circ (\textbf{i}\mathbb{R}^+))}). \nonumber
\end{eqnarray}

Let $p_B \in B\circ (\textbf{i}\mathbb{R}^+)$ such that 
\[\dist_{\mathbb{H}}(p_B, \textbf{i}\mathbb{R}^{+})=\dist_{\mathbb{H}}(B\circ (\textbf{i}\mathbb{R}^{+}),\textbf{i}\mathbb{R}^{+}).\]

Then, 
\begin{small}\begin{eqnarray}\label{u-1}
\langle \grad \ell_{\alpha},\grad \ell_{\alpha}\rangle_{wp} \leq \frac{2}{\pi}(\ell_{\alpha}+C_{2}(\epsilon_0) \displaystyle \sum_{B\in \{<A>\backslash\Gamma/<A>-id\}} e^{-2\dist_{\mathbb{H}}(\textbf{i}\mathbb{R}^+,p_B)}).
\end{eqnarray}
\end{small}

Lemma \ref{mvp} implies that the function  $e^{-2\dist_{\mathbb{H}}(\textbf{i}\mathbb{R}^+,z)}$ has the mean value property. Set
\[r(\epsilon_0)=\frac{\epsilon_0}{8}.\]

Thus, from Lemma \ref{mvp} we know that
\begin{eqnarray*}
 e^{-2\dist_{\mathbb{H}}(\textbf{i}\mathbb{R}^+,p_B)}\leq  c(r(\epsilon_0)) \int_{B_{\mathbb{H}}(p_B;r(\epsilon_0))}e^{-2\dist_{\mathbb{H}}(z, \textbf{i}\mathbb{R^+})}dA(z)
\end{eqnarray*}
where $c(\cdot)$ is the constant in Lemma \ref{mvp}.

From our assumption that $X\in \Teich(S_{g,n})^{\geq \epsilon_0}$, Lemma \ref{3-l-1} and the triangle inequality we know that the geodesic balls $\{B_{\mathbb{H}}(p_B;r(\epsilon_0))\}_{B\in \{<A>\backslash\Gamma/<A>-id\}}$ are pairwise disjoint. Thus,

\begin{eqnarray*}
&& \sum_{B\in \{<A>\backslash\Gamma/<A>-id\}}  e^{-2\dist_{\mathbb{H}}(\textbf{i}\mathbb{R}^+,p_B)}\\
&\leq&  c(r(\epsilon_0))\sum_{B\in  \{<A>\backslash\Gamma/<A>-id\}} \int_{B_{\mathbb{H}}(p_B;r(\epsilon_0))}e^{-2\dist_{\mathbb{H}}(z, \textbf{i}\mathbb{R^+})}dA(z)\\ 
&=&c(r(\epsilon_0))\displaystyle\int_{\cup_{B\in \{<A>\backslash\Gamma/<A>-id\}}B_{\mathbb{H}}(p_B;r(\epsilon_0))}e^{-2\dist_{\mathbb{H}}(z, \textbf{i}\mathbb{R^+})}dA(z).
\end{eqnarray*}

Since $\ell_{\alpha}(X)\geq \epsilon_0$ and $r(\epsilon_0)\leq \frac{\epsilon_0}{4}$, from Lemma \ref{3-l-2} we have that the union of the geodesic balls satisfy that
\[\cup_{B\in \{<A>\backslash\Gamma/<A>-id\}}B_{\mathbb{H}}(p_B;r(\epsilon_0))\subset \{(r,\theta)\in \mathbb{H}; e^{-\ell_{\alpha}}\leq r\leq e^{2\ell_{\alpha}}\}.\]

Thus, 
\begin{eqnarray*}
 \sum_{B\in \{<A>\backslash\Gamma/<A>-id\}}  e^{-2\dist_{\mathbb{H}}(\textbf{i}\mathbb{R}^+,p_B)}\leq c(r(\epsilon_0))\times\\
 \displaystyle\int_{\{(r,\theta)\in \mathbb{H}; e^{-\ell_{\alpha}}\leq r\leq e^{2\ell_{\alpha}}\}}e^{-2\dist_{\mathbb{H}}(z, \textbf{i}\mathbb{R^+})}dA(z).
\end{eqnarray*}

From inequality (\ref{i-exp}) we have
\begin{eqnarray}\label{u-2}
 \sum_{B\in \{<A>\backslash\Gamma/<A>-id\}}  e^{-2\dist_{\mathbb{H}}(\textbf{i}\mathbb{R}^+,p_B)}&\leq&  c(r(\epsilon_0))\int_{0}^{\pi} \int_{e^{-\ell_{\alpha}}}^{e^{2\ell_{\alpha}}}\sin^2{\theta}dA(z)\\
 &=& c(r(\epsilon_0))\int_{0}^{\pi} \int_{e^{-\ell_{\alpha}}}^{e^{2\ell_{\alpha}}}\frac{\sin^2{\theta}}{r^2\sin^2{\theta}}rdrd\theta \nonumber\\
&=&c(r(\epsilon_0))\cdot 3\pi  \cdot\ell_{\alpha} \nonumber
\end{eqnarray}
where in the first equality we apply $dA(z)=\frac{|dz|^2}{y^2}=\frac{rdrd\theta}{r^2\sin^2\theta}$.

Therefore, the conclusion follows from inequalities (\ref{u-1}) and (\ref{u-2}) by choosing
\[D(\epsilon_0)=\frac{2}{\pi}(1+C_{2}(\epsilon_0)\cdot c(r(\epsilon_0))\cdot 3\pi).\]
\end{proof}

\begin{proof}[Proof of Proposition \ref{lip-sys}]
Let $s=\dist_{wp}(X,Y)>0$ and 
\[\gamma:[0,s]\to \Teich(S_{g,n})^{\geq \epsilon_0}\] 
be the geodesic $\g(X,Y)$ with $\gamma(0)=X$ and $\gamma(s)=Y$. From Lemma \ref{key-l} we know that there exist a positive integer $k$, a partition $0=t_0<t_1<\cdots<t_{k-1}<t_k=s$ of the interval $[0,s]$ and a sequence of essential simple closed curves $\{\alpha_{i}\}_{0\leq i \leq k-1}$ in $S_{g,n}$ such that for all $0\leq i \leq k-1$ we have
\[\ell_{\alpha_i}(\gamma(t))=\lsys(\gamma(t)), \quad \forall t_i\leq t \leq t_{i+1}.\]

Then, 
\begin{eqnarray*}
|\sqrt{\lsys(X)}-\sqrt{\lsys(Y)}|&\leq& \sum_{i=0}^{k-1}|\sqrt{\lsys(\gamma(t_i))}-\sqrt{\lsys(\gamma(t_{i+1}))}|\\
&=&  \sum_{i=0}^{k-1}|\sqrt{\ell_{\alpha_i}(\gamma(t_i))}-\sqrt{\ell_{\alpha_i}(\gamma(t_{i+1}))}|\\
&=& \sum_{i=0}^{k-1}|\int_{t_i}^{i+1} \langle \grad \ell_{\alpha_i}^{\frac{1}{2}}(\gamma(t)),\gamma'(t) \rangle_{wp}|dt\\
&\leq&  \sum_{i=0}^{k-1} \int_{t_i}^{t_{i+1}}||\grad \ell_{\alpha_i}^{\frac{1}{2}}(\gamma(t))||_{wp}dt
\end{eqnarray*}
where $||\cdot||_{wp}$ is the \wep norm.

Since $\gamma([0,s]) \subset \Teich(S_{g,n})^{\geq \epsilon_0}$, from Proposition \ref{gl-qi} we have for all $0\leq i \leq (k-1)$ and $t_i\leq t \leq t_{i+1}$,
\begin{eqnarray*}
||\grad \ell_{\alpha_i}^{\frac{1}{2}}(\gamma(t))||_{wp}\leq \sqrt{\frac{D(\epsilon_0)}{4}}.
\end{eqnarray*}

Recall that $\dist_{wp}(X,Y)=s=t_k$ and $t_0=0$. Therefore, the two inequalities above yield that
\begin{eqnarray*}
|\sqrt{\lsys(X)}-\sqrt{\lsys(Y)}|\leq \frac{\sqrt{D(\epsilon_0)}}{2}\dist_{wp}(X,Y).
\end{eqnarray*}

Then the conclusion follows by choosing $C(\epsilon_0)=\frac{\sqrt{D(\epsilon_0)}}{2}$.
\end{proof}

\begin{remark}
It is not hard to see that the constant $C(\epsilon_0)\to \infty $ as $\epsilon_0 \to 0$. 
\end{remark}

Before we prove Theorem \ref{mt-lip}, let us introduce the following result which is a direct consequence of Lemma \ref{short}.
\begin{lemma}\label{short-1}
There exists a universal constant $c>0$, independent of $g$ and $n$, such that for any $X\in \Teich(S_{g,n})$, and $\alpha \subset S_{g,n}$ which is an essential simple closed curve with $\ell_{\alpha}(X) \leq 1$, then the following holds
\[\langle \grad \ell_{\alpha}^{\frac{1}{2}}, \grad \ell_{\alpha}^{\frac{1}{2}}\rangle_{wp}(X)\leq c.\]
\end{lemma}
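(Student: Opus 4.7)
The plan is to deduce this directly from Wolpert's Lemma \ref{short} via the chain rule. Writing $\grad \ell_\alpha^{1/2} = \frac{1}{2\sqrt{\ell_\alpha}}\,\grad \ell_\alpha$ (valid wherever $\ell_\alpha > 0$, which is everywhere on $\Teich(S_{g,n})$ since $\alpha$ is essential), one obtains
\[
\langle \grad \ell_\alpha^{1/2},\grad \ell_\alpha^{1/2}\rangle_{wp}(X) \;=\; \frac{1}{4\ell_\alpha(X)}\,\langle \grad \ell_\alpha,\grad \ell_\alpha\rangle_{wp}(X).
\]

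Next I would plug in the bound from Lemma \ref{short}: there is a universal constant $c_0>0$ with
\[
\langle \grad \ell_\alpha,\grad \ell_\alpha\rangle_{wp}(X) \;\leq\; c_0\bigl(\ell_\alpha(X)+\ell_\alpha^2(X)e^{\ell_\alpha(X)/2}\bigr).
\]
Dividing by $4\ell_\alpha(X)$ yields
\[
\langle \grad \ell_\alpha^{1/2},\grad \ell_\alpha^{1/2}\rangle_{wp}(X) \;\leq\; \frac{c_0}{4}\bigl(1+\ell_\alpha(X)\,e^{\ell_\alpha(X)/2}\bigr).
\]

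Finally, I would invoke the hypothesis $\ell_\alpha(X)\leq 1$: this gives $\ell_\alpha(X)\,e^{\ell_\alpha(X)/2}\leq e^{1/2}$, so the right-hand side is bounded above by the universal constant $c := \frac{c_0}{4}(1+\sqrt{e})$, which depends neither on $g$ nor $n$ nor on the particular curve $\alpha$. There is essentially no obstacle here; the lemma is a direct corollary of Lemma \ref{short} tailored to the short-curve regime, and the only minor point to be careful about is that the chain-rule computation requires $\ell_\alpha(X)>0$, which is automatic for essential simple closed curves on a point of Teichm\"uller space.
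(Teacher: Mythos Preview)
Your argument is correct and is exactly the intended one: the paper itself introduces Lemma \ref{short-1} as ``a direct consequence of Lemma \ref{short}'' without writing out a proof, and your chain-rule computation together with the bound $\ell_\alpha(X)\,e^{\ell_\alpha(X)/2}\leq \sqrt{e}$ on the region $\ell_\alpha\leq 1$ is precisely that direct consequence.
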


Now we are ready to prove Theorem \ref{mt-lip}.
\begin{proof}[Proof of Theorem \ref{mt-lip}]
Let $X\neq Y\in \Teich(S_{g,n})$, $s=\dist_{wp}(X,Y)>0$ and $\gamma:[0,s]\to \Teich(S_{g,n})$ be the \wep geodesic with $\gamma(0)=X$ and $\gamma(s)=Y$. We apply Lemma \ref{key-l-2} to the geodesic $\gamma([0,s])$ with $\epsilon_0=1$. So there exist a positive integer $k$, a partition $0=t_0<t_1<\cdots<t_{k-1}<t_k=s$ of the interval $[0,s]$, a sequence of closed intervals $\{[a_i,b_i]\subseteq [t_{i},t_{i+1}]\}_{0\leq i \leq k-1}$ and a sequence of essential simple closed curves $\{\alpha_{i}\}_{0\leq i \leq k-1}$ in $S_{g,n}$ such that for all $0\leq i \leq k-1$,
\begin{eqnarray}
\ell_{\alpha_i}(\gamma(t))&=&\lsys(\gamma(t)), \quad \forall t_i\leq t \leq t_{i+1}. \label{4-5-0}\\
\gamma([0,s])\cap \lsys^{-1}([0,1] )&=&\displaystyle\cup_{0\leq i \leq k-1}\gamma([a_i,b_i]). \label{4-5-1}
\end{eqnarray}

Since $[a_i,b_i]\subseteq [t_{i},t_{i+1]}$ for all $0\leq i \leq k-1$, from Equation (\ref{4-5-1}) we know that for all $0\leq i \leq k-1$, 
\begin{eqnarray}\label{4-5-3}
\lsys(\gamma(t))\geq 1, \ \forall t \in [t_i,a_i]\cup [b_i,t_{i+1}].
\end{eqnarray}

Then,
\begin{eqnarray*}
&&|\sqrt{\lsys(X)}-\sqrt{\lsys(Y)}|=|\sqrt{\lsys(\gamma(0))}-\sqrt{\lsys(\gamma(s))}|\\
&\leq& \sum_{i=0}^{k-1}(|\sqrt{\lsys(\gamma(t_i))}-\sqrt{\lsys(\gamma(a_i))}|+|\sqrt{\lsys(\gamma(a_i))}-\sqrt{\lsys(\gamma(b_i))}|\\
&+&|\sqrt{\lsys(\gamma(b_i))}-\sqrt{\lsys(\gamma(t_{i+1}))}|).
\end{eqnarray*}

From Equation (\ref{4-5-3}) and Proposition \ref{lip-sys} we have
\begin{eqnarray}\label{4-6-1}
&&|\sqrt{\lsys(X)}-\sqrt{\lsys(Y)}|\leq \sum_{i=0}^{k-1}(C(1)\cdot|a_i-t_i|+C(1)\cdot |t_{i+1}-b_i|)\\
&+&\sum_{i=0}^{k-1}|\sqrt{\lsys(\gamma(a_i))}-\sqrt{\lsys(\gamma(b_i))}| \nonumber\\
&=&\sum_{i=0}^{k-1}C(1)\cdot(t_{i+1}-t_i+a_i-b_i)+\sum_{i=0}^{k-1}|\sqrt{\lsys(\gamma(a_i))}-\sqrt{\lsys(\gamma(b_i))}|\nonumber\\
&=&\sum_{i=0}^{k-1}C(1)\cdot(t_{i+1}-t_i+a_i-b_i)+\sum_{i=0}^{k-1}|\sqrt{\ell_{\alpha_i}(\gamma(a_i))}-\sqrt{\ell_{\alpha_i}(\gamma(b_i))}|\nonumber
\end{eqnarray}
where we apply Equation (\ref{4-5-0}) in the last step.

Using Equations (\ref{4-5-0}) and (\ref{4-5-1}), we apply Lemma \ref{short-1} to the geodesic segment $\gamma([a_i,b_i])$. Then for all $0\leq i \leq k-1$,
\begin{eqnarray}\label{4-6-2}
&&|\sqrt{\ell_{\alpha_i}(\gamma(a_i))}-\sqrt{\ell_{\alpha_i}(\gamma(b_i))}|=|\int_{a_i}^{b_i} \langle\grad \ell_{\alpha_i}^{\frac{1}{2}}(\gamma(t)), \gamma'(t)\rangle_{wp}dt|\\
&\leq&\int_{a_i}^{b_i} ||\grad \ell_{\alpha_i}^{\frac{1}{2}}(\gamma(t))||_{wp}dt\leq \sqrt{c}\cdot (b_i-a_i)\nonumber
\end{eqnarray}
where $||\cdot||_{wp}$ is the \wep norm.

Combine inequalities (\ref{4-6-1}) and (\ref{4-6-2}) we get
\begin{eqnarray*}
|\sqrt{\lsys(X)}-\sqrt{\lsys(Y)}|&\leq& \sum_{i=0}^{k-1}C(1)\cdot(t_{i+1}-t_i+a_i-b_i)+ \sum_{i=0}^{k-1} \sqrt{c}\cdot (b_i-a_i)\\
&\leq& \max\{C(1),\sqrt{c}\} \cdot (t_{k}-t_0)\\
&=&\max\{C(1),\sqrt{c}\} \cdot \dist_{wp}(X,Y).
\end{eqnarray*}

Then the conclusion follows by choosing $K=\max\{C(1),\sqrt{c}\}$.
\end{proof}

\begin{remark}
For the case $(g,n)=(1,1)$ or $(0,4)$, we let $\alpha, \beta \subset S_{g,n}$ be any two essential simple closed curves which fill the surface $S_{g,n}$.  The strata $\sT_{\alpha}$ and $\sT_{\beta}$ are two single points. By \cite{DW03, Wolpert03, Yamada04} the \wep geodesic $I$ joining $\sT_{\alpha}$ and $\sT_{\beta}$ is contained in $\Teich(S_{g,n})$ except the two end points. The Collar Lemma \cite{Keen74} implies that there exists at least one point $Z\in I$ such that $\lsys(Z)\geq 2\arcsinh{1}$. Then Theorem \ref{mt-lip} gives that $\ell(I)= \dist_{WP}(Z, \sT_{\alpha})+ \dist_{WP}(Z, \sT_{\beta})\geq \frac{2\sqrt{2\arcsinh{1}}}{K}>0$. One can see \cite[Corollary 22]{Wolpert03} for a more general statement, and see Theorem 1.7 in \cite{BB2014} for a more explicit lower bound. Since the completion $\overline{\sM_{g,n}}$ contains $\sM_{0,2g+n}$ as a totally geodesic subspace, up to a uniform multiplicative constant the quantity $\sqrt{g}$ serves as a lower bound for the diameter $\diam(\sM_{g,n})$ for large genus, as observed in Proposition 5.1 in \cite{CP12}. 
\end{remark}


\section{Proofs of Theorems \ref{mt-g}, \ref{mt-n} and \ref{leaf}}\label{g-n}
In this section we will first prove Theorem \ref{leaf} and then apply Theorem \ref{mt-lip} to finish the proofs of Theorems \ref{mt-g} and \ref{mt-n}. 

\begin{proof}[Proof of Theorem \ref{leaf}]
For the lower bound, by the Mumford compactness theorem we may assume that $X \in \partial \sM^{\geq s}_{g,n}$ and $Y \in \partial \sM^{\geq t}_{g,n}$ such that 
\[\dist_{wp}(\partial \sM^{\geq s}_{g,n},\partial \sM^{\geq t}_{g,n})=\dist_{wp}(X,Y).\]

From Theorem \ref{mt-lip} we know that $\dist_{wp}(X,Y)\geq K(\sqrt{s}-\sqrt{t})$. Thus,
\begin{eqnarray*}
\dist_{wp}(\partial \sM^{\geq s}_{g,n},\partial \sM^{\geq t}_{g,n}) \geq K(\sqrt{s}-\sqrt{t}).
\end{eqnarray*}\

For the upper bound, for any $X \in \partial \sM^{\geq s}_{g,n}$ we let $\alpha \subset X$ such that
\[\lsys(X)=\ell_{\alpha}(X)=s.\]

Recall that Equation (\ref{Rie-f}) (Riera's formula) tells that 
\begin{equation}\label{5.11-1}
\langle \grad \sqrt{2\pi \ell_{\alpha}},\grad \sqrt{2 \pi\ell_{\alpha}}\rangle_{wp} >1.
\end{equation}

It follows from standard ODE theory that there exists a smooth curve $\gamma$ of arc-length parameter $r$ in $\sM_{g,n}$ such that
\[\gamma(0)=X  \ \textit{and} \  \gamma'(r)=- \frac{\grad \sqrt{2\pi \ell_{\alpha}(\gamma(r))}}{||\grad \sqrt{2\pi \ell_{\alpha}(\gamma(r))}||_{wp}}.\]

The length function $\ell_{\alpha}$ is decreasing along $\gamma$ because for $r_1>r_2>0$,
\begin{eqnarray*}
\sqrt{2\pi \ell_{\alpha}(\gamma(r_1))}-\sqrt{2\pi \ell_{\alpha}(\gamma(r_2))}&=&\int_{r_2}^{r_1} \langle \grad \sqrt{2\pi \ell_{\alpha}(\gamma(r))}, \gamma'(t) \rangle_{wp} dr \\
 &=& - \int_{r_2}^{r_1} ||\grad \sqrt{2\pi \ell_{\alpha}(\gamma(r))}||_{wp} dr \\
&<& 0.
\end{eqnarray*}

By the inequality above we know that the curve $\gamma$ will go to the stratum whose pinching curve is $\alpha$. Since $s>t\geq 0$ and 
$\ell_{\alpha}(\gamma(0))=s$, we may assume that $r_0>0$ is a constant such that $$\ell_{\alpha}(\gamma(r_0))=t.$$ 

Then we have
\begin{eqnarray*}
\sqrt{2\pi s}-\sqrt{2\pi t}&=& \sqrt{2\pi \ell_{\alpha}(\gamma(0))}-\sqrt{2\pi \ell_{\alpha}(\gamma(r_0))} \\
&=&\int_{r_0}^{0} \langle \grad \sqrt{2\pi \ell_{\alpha}(\gamma(r))}, \gamma'(t) \rangle_{wp} dr \\
 &=& \int_{0}^{r_0} ||\grad \sqrt{2\pi \ell_{\alpha}(\gamma(r))}||_{wp} dr \\
&\geq & r_0 \quad \quad (by \ Equation \ (\ref{5.11-1})) \\
&\geq & \dist_{wp} (X, \gamma(r_0))
\end{eqnarray*}
where the last inequality uses the fact that $\gamma$ uses the arc-length parameter.

Since $\ell_{\alpha} (\gamma(r_0))=t<s=\ell_{\alpha} (X)$, the \wep geodesic joining $X$ and $\gamma(r_0)$ will cross the leaf $\partial \sM_{g,n}^{\geq t}$. Thus,
\[\dist_{wp}(X, \gamma(r_0)) \geq \dist_{wp}(X, \partial \sM_{g,n}^{\geq t}).\]

Since $\ell_{\alpha} (X)=s$, the two inequalities above imply that
\begin{eqnarray*}
\dist_{wp}(\partial \sM^{\geq s}_{g,n},\partial \sM^{\geq t}_{g,n}) &\leq& \dist_{wp}(X, \partial \sM_{g,n}^{\geq t})\\
&\leq & \sqrt{2 \pi}(\sqrt{s}-\sqrt{t}).
\end{eqnarray*}

Then the conclusion follows by choosing 
\[K'=\max\{\sqrt{2\pi}, \frac{1}{K}\}.\]
\end{proof}

\begin{remark}
The argument in the proof of Theorem \ref{leaf} also gives that \textsl{$\max_{X \in \partial \sM^{\geq s}_{g,n}}\dist_{wp}(X,\partial \sM^{\geq t}_{g,n})$ is uniformly comparable to $(\sqrt{s}-\sqrt{t})$.}
\end{remark}

Although \tec space is non-compact, the systole function $\lsys(\cdot):\Teich(S_{g,n})\to \mathbb{R}^+$ is bounded above by a constant depending on $g$ and $n$. Follow \cite{BMP14} we define
\[sys(g,n):=\sup_{X\in \Teich(S_{g,n})}\lsys(X).\]
By Mumford's compactness theorem \cite{Mumford71} this supremum is in fact a maximum. We list some bounds for $sys(g,n)$ which will be useful in the proofs of Theorems \ref{mt-g} and \ref{mt-n}.  
One can see \cite{BMP14} for more details on $sys(g,n)$.

We always assume that $3g+n-3>0$. Since the set of shortest closed geodesics of a maximal surface fills the surface, the Collar Lemma \cite{Keen74} gives that
\begin{eqnarray}\label{5-1}
\ sys(g,n)\geq 2 \arcsinh{1}.
\end{eqnarray}

Buser and Sarnak proved in \cite{BS94} that there exists a universal constant $U>0$ such that $sys(g,0)\geq U \ln{g}$. And actually they also proved that there exists a subsequence $\{g_k\}_{k\geq 1}$ of $\{g\}_{g\geq 1}$ such that $sys(g_k,0)\geq \frac{4}{3} \ln{g_k}$. If we allow the surface to have punctures, based on Buser-Sarnak's work, Balacheff, Makover and Parlier \cite[Prop. 2]{BMP14} proved the following lower bound which will be useful to prove Theorem \ref{mt-g}. 
\begin{eqnarray}\label{5-2}
\ sys(g,n)\geq \min\{U \ln{g},2 \arccosh{(\frac{2(g-1)}{n}+1)}\}.
\end{eqnarray} 

An interesting upper bound for $sys(g,n)$ was provided by Schmutz in \cite{Schmutz94}, which says that if $n\geq 2$, $sys(g,n)\leq 4 \arccosh{(\frac{6g-6+3n}{n})}$. If $g\geq 1$, Part 1 of \cite[Theorem]{BMP14} tells that $sys(g,0)<sys(g,1)<sys(g,2)$. Thus, these two results give that for all $g,n$ with $3g+n-3>0$,
\begin{eqnarray}\label{5-3}
\ sys(g,n)\leq \min\{4 \arccosh{(3(g+1))},4 \arccosh{(\frac{6g-6+3n}{n})}\}.
\end{eqnarray} 

Now we are ready to prove Theorems \ref{mt-g} and \ref{mt-n}.
\begin{proof}[Proof of Theorem \ref{mt-g}]
For any $X \in \sM_{g,n}$, we let $\alpha \subset X$ be a systolic curve, i.e., $\ell_{\alpha}(X)=\lsys(X)$. Inequality (\ref{5-3}) tells that if $g\geq 2$,
\begin{eqnarray}\label{5-1-1}
\lsys(X)\leq 4 \arccosh{(4g)}.
\end{eqnarray}

Let $\sT_{\alpha}\subset \overline{\sM}_{g,n}$ be the stratum whose vanishing curve is $\alpha$. Then we have for all $g\geq 2$,
\begin{eqnarray*}
\dist_{wp}(X,\partial \overline{\sM}_{g,n} ) &\leq& \dist_{wp}(X,\sT_{\alpha_{g,n}} )\\
&\leq& \sqrt{2\pi \ell_{\alpha}(X)}, \quad (by \ Theorem \ (\ref{d-stra})) \\
&=&  \sqrt{2\pi \lsys(X)}\\
&\leq &  \sqrt{2\pi \cdot 4 \arccosh{(4g)}} \quad (by \ inequality \ (\ref{5-1-1}))\\
&< & \sqrt{32\pi} \cdot \sqrt{\ln{g}}.
\end{eqnarray*}

Since $X\in \sM_{g,n}$ is arbitrary, we have
\[\InR(\sM_{g,n})\leq \sqrt{32\pi} \cdot \sqrt{\ln{g}}.\]\

For the lower bound, from inequality (\ref{5-2}) one may choose a surface $Y\in \sM_{g,n}$ such that 
\begin{eqnarray}\label{5-2-1}
\lsys(Y)\geq\min\{U \ln{g},2 \arccosh{(\frac{2(g-1)}{n}+1)}\}.
\end{eqnarray} 

Thus, there exists a constant $k(n)$, only depending on $n$, such that
\[\lsys(Y)\geq k(n)\cdot \ln{g}.\]

We let $Z\in \partial{ \sM_{g,n}}$ such that 
$$\dist_{wp}(Y,Z)=\dist_{wp}(Y,\partial{\sM_{g,n}}).$$

Then we have
 \begin{eqnarray*}
\InR(\sM_{g,n})&\geq& \dist_{wp}(Y,\partial{\sM_{g,n}}) \\
&=&\dist_{wp}(Y,Z)\\
&\geq& \frac{1}{K} |\sqrt{\lsys(Y)}-\sqrt{\lsys(Z)}| \quad (by \ Theorem \ \ref{mt-lip}))\\
&=&\frac{1}{K} \sqrt{\lsys(Y)}  \quad (because \ \lsys(Z)=0) \\
&\geq& \frac{\sqrt{k(n)}}{K}\sqrt{\ln{g}}
\end{eqnarray*} 
where $K$ is the universal constant from Theorem \ref{mt-lip}.
\end{proof}

\begin{remark}\label{g-1}
The proof of Theorem \ref{mt-g} also leads to the following result.
\begin{theorem}
For all $g,n$ with $g\geq 2$, then
\[\InR (\Teich(S_{g,n})) \asymp_g \sqrt{\ln{g}}.\]
\end{theorem}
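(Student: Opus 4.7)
The plan is to run the same two-sided argument used for Theorem \ref{mt-g}, with $\sM_{g,n}$ replaced by $\Teich(S_{g,n})$ throughout. The ingredients---Wolpert's upper bound for distance to strata (Theorem \ref{d-stra}), the systole comparisons \eqref{5-2} and \eqref{5-3}, and the uniform Lipschitz property of $\sqrt{\lsys}$ (Theorem \ref{mt-lip})---all live natively at the level of \tec space, so the passage from moduli space to \tec space is essentially cosmetic.

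For the upper bound I would argue as follows: for an arbitrary $X\in\Teich(S_{g,n})$ with $g\geq 2$, fix a systolic curve $\alpha$ with $\ell_\alpha(X)=\lsys(X)$. The stratum $\sT_\alpha\subset \partial\overline{\Teich(S_{g,n})}$ on which $\alpha$ is pinched is a component of the boundary, so by Theorem \ref{d-stra}
\[
\dist_{wp}\bigl(X,\partial\overline{\Teich(S_{g,n})}\bigr)\leq \dist_{wp}(X,\sT_\alpha)\leq \sqrt{2\pi\,\ell_\alpha(X)}=\sqrt{2\pi\,\lsys(X)}.
\]
Combining with the universal bound $\lsys(X)\leq \text{sys}(g,n)\leq 4\arccosh(4g)$ from \eqref{5-3} gives $\dist_{wp}(X,\partial\overline{\Teich(S_{g,n})})\leq \sqrt{32\pi}\sqrt{\ln g}$ for all $g\geq 2$, and then taking the supremum in $X$ yields $\InR(\Teich(S_{g,n}))\lesssim \sqrt{\ln g}$.

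For the lower bound I would use the Balacheff-Makover-Parlier bound \eqref{5-2} to produce $Y\in \Teich(S_{g,n})$ with $\lsys(Y)\geq k(n)\ln g$ for a constant $k(n)>0$ depending only on $n$. Choose a sequence $\{Z_m\}\subset \Teich(S_{g,n})$ with $Z_m\to Z_\infty\in\partial\overline{\Teich(S_{g,n})}$ such that $\dist_{wp}(Y,Z_m)\to\dist_{wp}(Y,\partial\overline{\Teich(S_{g,n})})$. Since $Z_\infty$ sits on the boundary, at least one essential simple closed curve has pinched in the limit, so $\lsys(Z_m)\to 0$. Theorem \ref{mt-lip} applied to each $Z_m$ gives
\[
\sqrt{\lsys(Y)}-\sqrt{\lsys(Z_m)}\leq K\,\dist_{wp}(Y,Z_m),
\]
and passing to the limit yields $\dist_{wp}(Y,\partial\overline{\Teich(S_{g,n})})\geq \tfrac{1}{K}\sqrt{\lsys(Y)}\geq \tfrac{\sqrt{k(n)}}{K}\sqrt{\ln g}$, so $\InR(\Teich(S_{g,n}))\gtrsim \sqrt{\ln g}$.

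I do not anticipate any genuine obstacle here, since every technical step has already been carried out in Theorem \ref{mt-g}. The only point that requires a tiny bit of care is justifying $\lsys(Z_m)\to 0$ along a minimizing sequence towards the boundary; this is immediate because any limit point on $\partial\overline{\Teich(S_{g,n})}$ lies in a stratum $\sT_\sigma$ on which at least one curve length vanishes, and geodesic length functions are continuous on $\overline{\Teich(S_{g,n})}$. With that in hand the proof is complete.
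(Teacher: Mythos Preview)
Your proposal is correct and follows exactly the approach the paper intends: the paper's own justification is the single line ``The proof of Theorem \ref{mt-g} also leads to the following result,'' and you have faithfully spelled out that argument at the level of $\Teich(S_{g,n})$. The one minor imprecision is that a minimizing sequence $\{Z_m\}$ toward $\partial\overline{\Teich(S_{g,n})}$ need not converge to a single boundary point $Z_\infty$; this is harmless, since the uniform Lipschitz bound of Theorem \ref{mt-lip} extends $\sqrt{\lsys}$ continuously to the completion with value $0$ on the boundary, so you may apply Theorem \ref{mt-lip} directly with $Z\in\partial\overline{\Teich(S_{g,n})}$, exactly as the paper does in proving Theorem \ref{mt-g}.
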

\end{remark}

\begin{remark}\label{n(g)}
In the proof of the lower bound of Theorem \ref{mt-g}, the quantity $2 \arccosh{(\frac{2(g-1)}{n}+1)}$ is applied. Observe that for any constant $a\in (0,1)$, the quantity $2 \arccosh{(\frac{2(g-1)}{g^a}+1)}$ is comparable to $\ln{g}$ as $g$ goes to infinity. 
So we also get that
\[\InR(\sM_{g, [g^{a}]})\asymp_g \sqrt{\ln{g}}.\]
\end{remark}

The proof of Theorem \ref{mt-n} is similar to the one of Theorem \ref{mt-g}. 
\begin{proof}[Proof of Theorem \ref{mt-n}]

For any $X \in \sM_{g,n}$, we let $\alpha \subset X$ be a systolic curve, i.e., $\ell_{\alpha}(X)=\lsys(X)$. From inequality (\ref{5-3}) we know that there exists a constant $d(g)>0$, only depending on $g$, such that for all $n\geq 4$,
\begin{eqnarray}\label{5-3-1}
sys(g,n)\leq d(g).
\end{eqnarray}

Let $\sT_{\alpha}\subset \overline{\sM}_{g,n}$ be the stratum whose vanishing curve is $\alpha$. Then we have for all $n\geq 4$,
\begin{eqnarray*}
\dist_{wp}(X,\partial \overline{\sM}_{g,n} ) &\leq& \dist_{wp}(X,\sT_{\alpha_{g,n}} )\\
&\leq& \sqrt{2\pi \ell_{\alpha}(X)}, \quad (by \ Theorem \ (\ref{d-stra})) \\
&=&  \sqrt{2\pi \lsys(X)}\\
&\leq&  \sqrt{2\pi \cdot d(g)} \quad (by \ inequality \ (\ref{5-3-1})).
\end{eqnarray*}

Since $X\in \sM_{g,n}$ is arbitrary, we have
\[\InR(\sM_{g,n})\leq \sqrt{2\pi \cdot d(g)}.\]\

For the lower bound, we will give two different proofs: the first one will apply Theorem \ref{mt-lip}, and the other one will apply Lemma \ref{short-1} instead of Theorem \ref{mt-lip}.  

Method (1): we apply Theorem \ref{mt-lip}. First from inequality (\ref{5-1}) one may choose a surface $Y\in \sM_{g,n}$ such that 
\begin{eqnarray}\label{5-4-1}
\lsys(Y)\geq 2 \arcsinh{1}.
\end{eqnarray} 

We let $Z\in \partial{ \sM_{g,n}}$ such that 
$$\dist_{wp}(Y,Z)=\dist_{wp}(Y,\partial{\sM_{g,n}}).$$

Then we have
 \begin{eqnarray*}
\InR(\sM_{g,n})&\geq& \dist_{wp}(Y,\partial{\sM_{g,n}}) \\
&=&\dist_{wp}(Y,Z)\\
&\geq& \frac{1}{K} |\sqrt{\lsys(Y)}-\sqrt{\lsys(Z)}| \quad (by \ Theorem \ \ref{mt-lip}))\\
&=&\frac{1}{K} \sqrt{\lsys(Y)}  \quad (because \ \lsys(Z)=0) \\
&\geq& \frac{ \sqrt{2 \arcsinh{1}}}{K}
\end{eqnarray*} 
where $K$ is the universal constant from Theorem \ref{mt-lip}.\\

Method (2): we apply Lemma \ref{short-1} without using Theorem \ref{mt-lip}. Similarly from inequality (\ref{5-1}) one may choose a surface $Y\in \sM_{g,n}$ such that 
\begin{eqnarray}\label{5-4-1-1}
\lsys(Y)\geq 2 \arcsinh{1}.
\end{eqnarray} 

We let $Z\in \partial{ \sM_{g,n}}$ such that 
$$\dist_{wp}(Y,Z)=\dist_{wp}(Y,\partial{\sM_{g,n}}).$$

Let $\alpha \subset S_{g,n}$ be a pinched curve on $Z$, i.e., $\ell_{\alpha}(Z)=0$. Consider the shortest \wep geodesic $\gamma:[0,s]\to \overline{\sM}_{g,n}$ such that $\gamma(0)=Y$ and $\gamma(s)=Z$ where $s=\dist_{wp}(Y,Z)$. Since $\ell_{\alpha}(Z)=0$,  the constant $$s_{0}:=\inf\{t_0\in [0,s];\ \ell_{\alpha}(\gamma(t))\leq 1, \ \forall t_0\leq t\leq s\}$$ is well-defined. Since $2\arcsinh{1}\geq 1$, from inequality (\ref{5-4-1-1}) and the definition of $s_0$ we have $$\ell_{\alpha}(\gamma(s_0))=1.$$ 

We apply Lemma \ref{short-1} to the geodesic $\gamma([t_0,s))$. Then,
\begin{eqnarray*}
1&=&|\sqrt{\ell_{\alpha}(\gamma(s_0))}-\sqrt{\ell_{\alpha}(\gamma(s))}|\\
&=&|\int_{s_0}^{s} \langle\grad \ell_{\alpha}^{\frac{1}{2}}(\gamma(t)), \gamma'(t)\rangle_{wp}dt|\\
&\leq&\int_{s_0}^{s} ||\grad \ell_{\alpha}^{\frac{1}{2}}(\gamma(t))||_{wp}dt.
\end{eqnarray*}

Since $\ell_{\alpha}(\gamma(t))\leq 1$ for all $s_0\leq t \leq s$, from Lemma \ref{short-1} we have
\begin{eqnarray*}
1\leq \sqrt{c}\cdot (s-s_0)\leq \sqrt{c}\cdot \dist_{wp}(Y,Z)
\end{eqnarray*}
where $c$ is the constant in Lemma \ref{short-1}.

Thus,
 \begin{eqnarray*}
\InR(\sM_{g,n})&\geq& \dist_{wp}(Y,\partial{\sM_{g,n}}) \\
&=&\dist_{wp}(Y,Z)\\
&\geq& \frac{1}{\sqrt{c}}.
\end{eqnarray*}

The positive lower bounds from the two methods above are different. But both of them are independent of $g$ and $n$.\\

The proof is complete.
\end{proof}

\begin{remark}\label{n-1}
The proof of Theorem \ref{mt-n} also leads to 
\[\InR (\Teich(S_{g,n}))  \asymp_n 1.\]
\end{remark}

\begin{remark}\label{g(n)}
In the proof above, the quantity $4 \arccosh{(\frac{6g-6+3n}{n})}$ is applied to establish the upper bound. Observe that for any constant $a\in (0,1)$, $4 \arccosh{(\frac{6n^a-6+3n}{n})}$ is comparable to $1$ as $n$ goes to infinity. Actually the proof of Theorem \ref{mt-n} also yields that 
\[\InR(\sM_{[n^{a}],n})\asymp_n 1.\]
\end{remark}


\section{\wep volume for large genus}\label{wp-v}
For simplicity, we will focus on \tec space of closed surfaces endowed with the \wep metric, which is denoted by $\Teich(S_g)$. The results in this section are still true for surfaces with punctures. The space $\Teich(S_g)$ is incomplete \cite{Chu76, Wolpert75}, negatively curved \cite{Tromba86, Wolpert86} and uniquely geodesically convex \cite{Wolpert87}. We will study the asymptotic behavior of the \wep volumes of geodesic balls of finite radii in $\Teich(S_g)$ as the genus $g$ goes to infinity. The main goal in this section is to prove Theorem \ref{decay-0}.

The proof of Theorem \ref{decay-0} involves using Theorem \ref{mt-g} together with the following theorem due to Teo \cite{Teo09} on the Ricci curvature on the thick-part of the  \tec space. Let $\epsilon_0>0$. Recall that
$\Teich(S_{g,n})^{\geq \epsilon_0}$ is the $\epsilon_0$-thick part $\sT(S_{g,n})^{\geq \epsilon_0}$ endowed with the \wep metric.
\begin{theorem}\cite[Prop. 3.3]{Teo09}\label{teo}
The Ricci curvature of $\Teich(S_{g,n})^{\geq \epsilon_0}$ is bounded from below by $-C'(\epsilon_0)$ where $C'(\epsilon_0)>0$ is a constant which only depends on $\epsilon_0$.
\end{theorem}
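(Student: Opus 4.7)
The plan is to derive the Ricci estimate from the Tromba-Wolpert integral formula for the Weil-Petersson Riemannian curvature tensor, combined with a uniform pointwise bound on the Bergman-type kernel $\sum_\alpha |\mu_\alpha(z)|^2$ associated with an orthonormal basis of harmonic Beltrami differentials. The first step is to recall that at $X\in\Teich(S_{g,n})$, for any orthonormal basis $\{\mu_\alpha\}_{\alpha=1}^{3g-3+n}$ of harmonic Beltrami differentials, the curvature tensor takes the form
\[R_{\alpha\bar\beta\gamma\bar\delta}=\int_X \widetilde D(\mu_\alpha\bar\mu_\beta)\cdot(\mu_\gamma\bar\mu_\delta)\,dA+\int_X \widetilde D(\mu_\alpha\bar\mu_\delta)\cdot(\mu_\gamma\bar\mu_\beta)\,dA,\]
where $\widetilde D=-2(\Delta-2)^{-1}$ is a positive self-adjoint operator whose kernel is dominated by the Green's function of $2-\Delta$ on the hyperbolic plane. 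In particular $\widetilde D$ is bounded on $L^\infty(X)$ with an operator norm that depends only on the universal hyperbolic model.

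Tracing to form the Ricci curvature of a unit tangent vector $\mu$, the task reduces to bounding sums of integrals of the shape
\[\Bigl|\sum_\alpha \int_X \widetilde D(\mu\bar\mu_\alpha)\cdot(\mu_\alpha\bar\mu)\,dA\Bigr|\quad\text{and}\quad\Bigl|\sum_\alpha\int_X \widetilde D(|\mu|^2)\cdot|\mu_\alpha|^2\,dA\Bigr|\]
by $C'(\epsilon_0)\|\mu\|_{WP}^2$. After applying the $L^\infty$-boundedness of $\widetilde D$ and Cauchy--Schwarz, this reduces to a pointwise control of the Bergman-type sum $B_X(z):=\sum_\alpha |\mu_\alpha(z)|^2$. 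I would prove the key lemma that whenever the injectivity radius at $z\in X$ is at least $\epsilon_0/2$, one has $B_X(z)\leq c(\epsilon_0)$ with $c(\epsilon_0)$ depending only on $\epsilon_0$. This follows from a mean value inequality for the subharmonic expression $|\psi|^2\rho^{-2}$ attached to a holomorphic quadratic differential $\psi$: the hyperbolic ball of radius $\epsilon_0/4$ about $z$ embeds isometrically into $X$ by the thickness hypothesis, so the pointwise value is controlled by the $L^2$-mass on that ball, and summing over the orthonormal basis yields the bound. Since every point of $X\in\sT(S_{g,n})^{\geq\epsilon_0}$ has injectivity radius at least $\epsilon_0/2$, the estimate holds at every $z\in X$.

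The main obstacle is the uniformity of the constant in $g$ and $n$: although $\dim\Teich(S_{g,n})=3g-3+n$ grows linearly with the genus and enters the Ricci trace, the Bergman-type bound on $B_X(z)$ is intrinsically pointwise and dimension-free, depending only on the local hyperbolic geometry at scale $\epsilon_0$. One must also be careful that the full Kähler trace defining $\Ric(\mu,\bar\mu)$ involves two distinct contractions of $R_{\alpha\bar\beta\gamma\bar\delta}$, so both the ``diagonal'' integral with $|\mu_\alpha|^2$ and the ``off-diagonal'' integral with $\mu_\alpha\bar\mu$ must be estimated; this is where the positivity and $L^\infty$-boundedness of $\widetilde D$ are both needed. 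Combining these ingredients produces $|\Ric(\mu,\bar\mu)|\leq C'(\epsilon_0)\|\mu\|_{WP}^2$, which gives the desired lower bound $\Ric\geq -C'(\epsilon_0)$ with $C'(\epsilon_0)$ independent of $g$ and $n$.
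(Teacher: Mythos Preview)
The paper does not supply its own proof of this statement: Theorem~\ref{teo} is quoted verbatim as \cite[Prop.~3.3]{Teo09} and used as a black box in Section~\ref{wp-v}. There is therefore nothing in the present paper to compare your argument against. That said, your outline is exactly the strategy of Teo's original proof---Tromba--Wolpert curvature formula, $L^\infty$-boundedness of $(\Delta-2)^{-1}$, and a uniform pointwise bound on the Bergman sum $B_X(z)=\sum_\alpha|\mu_\alpha(z)|^2$ via a mean-value inequality on an embedded hyperbolic ball---so as a reconstruction of the cited result your approach is on the right track.

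There is, however, a genuine gap in your argument as written when $n>0$. You assert that ``every point of $X\in\sT(S_{g,n})^{\geq\epsilon_0}$ has injectivity radius at least $\epsilon_0/2$,'' and use this to obtain an embedded ball at every $z\in X$. This is false in the presence of cusps: the systole condition $\lsys(X)\geq\epsilon_0$ bounds only the length of the shortest \emph{closed geodesic}, while the injectivity radius tends to zero as $z$ moves into a cusp neighborhood. Your mean-value argument therefore does not apply uniformly on $X$. The repair (which is what Teo actually does) is to treat the cusp regions separately: a holomorphic quadratic differential with at most simple poles at the punctures gives a harmonic Beltrami differential that decays exponentially in the cusp coordinate, so $B_X(z)$ is uniformly bounded there by a constant depending only on the cusp geometry, not on the injectivity radius. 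On the complement of the standard cusp neighborhoods the injectivity radius is indeed bounded below in terms of $\epsilon_0$ (by the Collar/Margulis Lemma), and your mean-value argument goes through. Since the paper ultimately only applies the theorem to closed surfaces $S_g$ (see the opening of Section~\ref{wp-v}), your version suffices for the application, but as stated for $S_{g,n}$ it is incomplete.
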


The constant $C'(\epsilon_0)$ above roughly behaves like $\frac{2}{\pi \epsilon_0^2}$ as $\epsilon_0$ goes to $0$. 

Huang \cite{Huang07-a} showed that the \wep sectional curvature is not bounded below by any negative constant. For suitable choice of $\epsilon_0>0$, in \cite{WW15} it was shown that the minimal \wep sectional curvature over $\Teich(S_{g,n})^{\geq \epsilon_0}$ is comparable to $-1$ even as $g$ goes to infinity. For the most recent developments on the \wep curvature on the thick part of \tec space, one may refer to \cite{Huang07, WW15, Wu15-curvature}.

Since the completion $\overline{\Teich(S_{g})}$ of $\Teich(S_g)$ is not locally compact \cite{Wolpert03}, the \wep volume of a geodesic ball of finite radius in $\Teich(S_g)$ may blow up. The following result is well-known to experts. We provide it here for completeness.
\begin{proposition}\label{blow-up}
Let $X_g \in \Teich(S_g)$. Then, for any positive constant $r$ with $r>\dist_{wp}(X_g, \partial \overline{\Teich(S_{g})})$ the \wep volume satisfies 
\[\Vol_{wp}(B(X_g;r))=\infty\]
where $B(X_g;r)=\{Y\in \Teich(S_{g}); \ \dist_{wp}(Y,X_g)<r\}$.
\end{proposition}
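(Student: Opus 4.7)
The plan is to exploit the well-known non-local-compactness of $\overline{\Teich(S_g)}$ concretely, by finding infinitely many pairwise disjoint isometric copies of a fixed small ball all sitting inside $B(X_g;r)$. The copies will be produced by iterating a Dehn twist about a curve pinched at some nearby boundary point.

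First I would pick a boundary point $Z \in \partial\overline{\Teich(S_g)}$ realizing $\dist_{wp}(X_g,Z) < r$, which exists by the hypothesis $r > \dist_{wp}(X_g, \partial\overline{\Teich(S_g)})$. Let $\sigma$ be the collection of curves pinched at $Z$, so that $Z$ lies in the stratum $\sT_\sigma$, and fix any $\alpha \in \sigma$. The Dehn twist $T_\alpha \in \Mod(S_g)$ acts on $\Teich(S_g)$ by \wep isometries, and its isometric extension to $\overline{\Teich(S_g)}$ fixes every point of $\sT_\sigma$ pointwise, because twisting around a node leaves the underlying nodal Riemann surface unchanged; in particular $T_\alpha(Z)=Z$.

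Next, choose $\delta > 0$ small enough that $\dist_{wp}(X_g,Z) + 2\delta < r$, and pick an interior point $Y \in \Teich(S_g)$ with $\dist_{wp}(Y,Z) < \delta$ (possible by density of $\Teich(S_g)$ in $\overline{\Teich(S_g)}$). Then select $\eta \in (0,\delta)$ with $B(Y;\eta) \subset \Teich(S_g)$, and note that $V_0 := \Vol_{wp}(B(Y;\eta)) > 0$ since the \wep metric is a smooth Riemannian metric on $\Teich(S_g)$. For any $n \in \Z$ and $Y' \in B(Y;\eta)$, the fact that $T_\alpha$ is an isometry fixing $Z$ gives
\[
\dist_{wp}(Z, T_\alpha^n(Y')) = \dist_{wp}(Z,Y') \leq \dist_{wp}(Z,Y) + \eta < 2\delta,
\]
so by the triangle inequality $\dist_{wp}(X_g, T_\alpha^n(Y')) < \dist_{wp}(X_g,Z) + 2\delta < r$. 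Hence $T_\alpha^n(B(Y;\eta)) \subset B(X_g;r)$ for every $n \in \Z$.

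Finally, I invoke the proper discontinuity of $\Mod(S_g)$ on $\Teich(S_g)$: since $\overline{B}(Y;\eta)$ is a compact subset of $\Teich(S_g)$, there exists a finite integer $N$ such that $T_\alpha^k \overline{B}(Y;\eta) \cap \overline{B}(Y;\eta) = \emptyset$ for all $|k| > N$. Choosing the subsequence $n_j := j(N+1)$, $j \in \N$, one has $n_j - n_i = (j-i)(N+1)$ with $|j-i|(N+1) > N$ for $j \neq i$, so the balls $\{T_\alpha^{n_j}(B(Y;\eta))\}_{j \in \N}$ are pairwise disjoint. Since each has \wep volume $V_0 > 0$ and all lie inside $B(X_g;r)$,
\[
\Vol_{wp}(B(X_g;r)) \geq \sum_{j=0}^\infty \Vol_{wp}\bigl(T_\alpha^{n_j}(B(Y;\eta))\bigr) = \sum_{j=0}^\infty V_0 = \infty,
\]
which is the desired conclusion. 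The only delicate point is the assertion that $T_\alpha$ extends to an isometry of $\overline{\Teich(S_g)}$ fixing $\sT_\sigma$ pointwise; this is standard and available from the augmented \tec space references already cited in Section \ref{np} (Masur \cite{Masur76}, Wolpert \cite{Wolpert03}).
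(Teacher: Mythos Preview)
Your proof is correct and follows essentially the same approach as the paper's own proof: both pick a nearby boundary point in some stratum $\sT_\sigma$, use a Dehn twist about a pinched curve (the paper uses the full multi-twist $\tau_\sigma$, you use a single $T_\alpha$; either works since both fix the boundary point and have infinite order), and then invoke proper discontinuity of the mapping class group action to obtain infinitely many disjoint isometric copies of a small ball inside $B(X_g;r)$. The only cosmetic differences are that the paper places the interior point on the minimizing geodesic to the boundary and shrinks the radius to force disjointness, whereas you pick an arbitrary nearby interior point and pass to a subsequence $n_j = j(N+1)$; your handling of disjointness is arguably the cleaner of the two. One small point worth tightening: to invoke proper discontinuity you need $\overline{B}(Y;\eta)$ compact, which is guaranteed only for $\eta$ small enough (since $\Teich(S_g)$ is a finite-dimensional manifold); you should state this explicitly when choosing $\eta$.
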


\begin{proof}
Let $s=\dist_{wp}(X_g, \partial \overline{\Teich(S_{g})})<r$ and $\gamma: [0,s]\to \overline{\Teich(S_{g})}$ be the \wep geodesic such that $\gamma(0)=X_g$ and $\gamma(s)\in \partial \overline{\Teich(S_{g})}$. By results in \cite{DW03, Wolpert03,Yamada04} we know that the image satisfies
\[\gamma([0,s))\subset \Teich(S_{g,n}).\] 
Since $\gamma(s)\in \partial \overline{\Teich(S_{g})}$, we may assume that $\gamma(s)\in \sT_{\sigma}$ where $\sT_{\sigma}$ is some stratum. Let $\tau_{\sigma}=\Pi_{\alpha \subset \sigma^0}\tau_{\alpha}$ be the Dehn-twist on the multi curves in $\sigma^0$. Take a number $0<\epsilon<\frac{r-s}{2}$. Since the mapping class group acts properly discontinuously on $\Teich(S_{g})$ \cite{IT92}, there exists a positive constant $\epsilon'<\epsilon$ such that  the geodesic balls $\{\tau_{\sigma}^k \circ B(\gamma(s-\epsilon);\epsilon')\}_{k\geq 0}$ are pairwise disjoint. It is clear that $\tau_{\sigma}^k \circ \gamma(s)=\gamma(s)$ and $\tau_{\sigma}^k \circ B(\gamma(s-\epsilon);\epsilon')=B(\tau_{\sigma}^k \circ \gamma(s-\epsilon);\epsilon')$ for all $k\geq 0$. Then, for any $k\geq 0$ and $Z \in  B(\tau_{\sigma}^k \circ\gamma(s-\epsilon);\epsilon')$, the triangle inequality tells that 
\begin{eqnarray*}
\dist_{wp}(Z,X_g)&\leq& \dist_{wp}(Z,\tau_{\sigma}^k \circ\gamma(s-\epsilon))\\
&+&\dist_{wp}(\tau_{\sigma}^k \circ\gamma(s-\epsilon),\gamma(s))+\dist_{wp}(\gamma(s),X_g)\\
&<& \epsilon'+\epsilon+s\\
&<& 2\epsilon+s\\
&<&r.
\end{eqnarray*}
That is, for all $k\geq 0$, $\tau_{\sigma}^k \circ B(\gamma(s-\epsilon);\epsilon')
\subset B(X_g;r)$. Since $\{\tau_{\sigma}^k \circ B(\gamma(s-\epsilon);\epsilon')\}_{k\geq 0}$ are pairwise disjoint, we have
\begin{eqnarray*}
\Vol_{wp}(B(X_g;r))&\geq& \Vol_{wp}(\cup_{k\geq0}\tau_{\sigma}^k \circ B(\gamma(s-\epsilon);\epsilon'))\\
&=& \sum_{k\geq 0}\Vol_{wp}(\tau_{\sigma}^k \circ B(\gamma(s-\epsilon);\epsilon')) \\
&=&\infty
\end{eqnarray*}
where in the last step we use that fact $\tau_{\sigma}$ is an isometry on $\Teich(S_{g})$.
\end{proof}

Let $\{X_g\}_{g\geq 2}$ be a sequence of points in \tec space and $\{r_g\}_{g\geq 2}$ be a sequence of positive numbers. In this section we will study the asymptotic behavior of $\{\Vol_{wp}(B(X_g;r_g))\}_{g\geq 2}$ as $g$ tends to infinity. In light of Proposition \ref{blow-up}, we need to assume that the completions $\{\overline{B(X_g;r_g)}\}_{g\geq 2} \subset \overline{\Teich(S_{g})}$ always do not intersect the boundary of \tec space. For any $r_0>0$, we define $\sU(\Teich(S_{g}))^{\geq r_0}$ to be the subset in $\Teich(S_g)$ which is at least $r_0$-away from the boundary. More precisely,
\begin{eqnarray*}
\sU(\Teich(S_{g}))^{\geq r_0}:=\{X_g \in \Teich(S_g); \dist_{wp}(X_g; \partial \overline{\Teich(S_g)}\geq r_0\}.
\end{eqnarray*}

Theorems \ref{mt-g} and \ref{d-stra} tell that the largest radius of the geodesic ball in the set $\sU(\Teich(S_{g}))^{\geq r_0}$ is comparable to $\sqrt{\ln{g}}$ as $g$ goes infinity. 

Before we prove Theorem \ref{decay-0}, we first provide a lemma which says that the set $\sU(\Teich(S_{g}))^{\geq r_0}$
is contained in some thick part of \tec space. More precisely,
\begin{lemma}\label{6-1-thick}
For any $r_0>0$, there exists a constant $\epsilon(r_0)$, only depending on $r_0$, such that 
\[\sU(\Teich(S_{g}))^{\geq r_0} \subset \Teich(S_g)^{\geq \epsilon(r_0)}.\]
\end{lemma}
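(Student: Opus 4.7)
The plan is to argue by the contrapositive via Wolpert's distance-to-strata estimate (Theorem \ref{d-stra}). Specifically, if a point in $\Teich(S_g)$ has a short systole, then by pinching a systolic curve one can reach a boundary stratum within controlled Weil--Petersson distance, and this distance must be at least $r_0$ by assumption.

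In more detail, fix $X_g \in \sU(\Teich(S_g))^{\geq r_0}$ and let $\alpha \subset X_g$ be a systolic curve, so that $\ell_{\alpha}(X_g) = \ell_{sys}(X_g)$. Consider the stratum $\sT_{\{\alpha\}} \subset \partial \overline{\Teich(S_g)}$ defined by the vanishing of $\ell_{\alpha}$, and let $\pi_{\{\alpha\}}(X_g) \in \sT_{\{\alpha\}}$ be the nearest-point projection described in Section \ref{np}. Theorem \ref{d-stra} gives
\[
\dist_{wp}(X_g, \partial \overline{\Teich(S_g)}) \;\leq\; \dist_{wp}(X_g, \pi_{\{\alpha\}}(X_g)) \;\leq\; \sqrt{2\pi\,\ell_{\alpha}(X_g)} \;=\; \sqrt{2\pi\,\ell_{sys}(X_g)}.
\]
Combining with the hypothesis $\dist_{wp}(X_g, \partial \overline{\Teich(S_g)}) \geq r_0$ yields $\ell_{sys}(X_g) \geq \tfrac{r_0^2}{2\pi}$, so the choice $\epsilon(r_0) := \tfrac{r_0^2}{2\pi}$ does the job.

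There is no genuine obstacle: the entire content of the lemma is packaged into Wolpert's upper bound, and the only thing to check is that $\pi_{\{\alpha\}}(X_g)$ really lies on the boundary stratum (which is exactly the content of Theorem \ref{d-stra} combined with the fact, noted just above that theorem, that the nearest-point projection lands in $\sT_{\sigma}$ rather than in its closure). No large-genus considerations appear, and the constant $\epsilon(r_0)$ is manifestly independent of $g$ and $n$.
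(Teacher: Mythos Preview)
Your proof is correct and follows exactly the same approach as the paper: apply Wolpert's distance-to-strata bound (Theorem \ref{d-stra}) to a systolic curve and solve the resulting inequality for $\ell_{sys}$. Your constant $\epsilon(r_0)=\tfrac{r_0^2}{2\pi}$ is in fact the right one; the paper records $\tfrac{r_0^2}{4\pi^2}$, which appears to be a harmless arithmetic slip.
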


\begin{proof}
The proof is a direct application of Theorem \ref{d-stra}. For any $X_g \in \sU(\Teich(S_{g}))^{\geq r_0}$ we let $\alpha_g \subset X_g$ be an essential simple closed curve such that $\ell_{\alpha_g}(X_g)=\lsys(X_g)$, and $\sT_{\alpha}$ be the stratum in $\overline{\Teich(S_g)}$ whose vanishing curve is $\alpha$. Then, by Theorem \ref{d-stra} we have
\begin{eqnarray*}
r_0 &\leq& \dist_{wp}(X_g, \sT_{\alpha_g})\\
&\leq & \sqrt{2\pi \lsys(X_g)}.
\end{eqnarray*}

Thus, 
\[X_g \in \Teich(S_g)^{\geq \frac{r_0^2}{4\pi^2}}.\] 

Then the conclusion follows by choosing 
\[\epsilon(r_0)= \frac{r_0^2}{4\pi^2}.\]
\end{proof}

Now we are ready to prove Theorem \ref{decay-0}.

\begin{proof}[Proof of Theorem \ref{decay-0}]
Let $B(X_g;r_g)\subset \sU(\Teich(S_{g}))^{\geq r_0}$ be an arbitrary geodesic ball where $X_g \in \Teich(S_g)$ and $r_g>0$. Lemma \ref{6-1-thick} tells that there exists a constant $\epsilon(r_0)$, only depending on $r_0$, such that
\[B(X_g;r_g) \subset \Teich(S_g)^{\geq \epsilon(r_0)}.\]

By Teo's curvature bound (see Theorem \ref{teo}) there exists a constant $C'(r_0)>0$, only depending on $r_{0}$, such that the Ricci curvature satisfies 
\begin{eqnarray} 
\Ric|_{B(X_g;r_g)} &\geq& -C'(r_0)\\
&=& (6g-7)\cdot(\frac{-C'(r_0)}{6g-7}). \nonumber
\end{eqnarray}

From the Gromov-Bishop Volume Comparison Theorem \cite{Gromov1981} we have
\begin{eqnarray} 
\Vol_{wp}(B(X_g;r_g))\leq \Vol_{Euc}(\mathbb{S}^{6g-7})\int_{0}^{r_g}(\frac{\sinh{(\sqrt{\frac{C'(r_0)}{6g-7}}t)}}{\sqrt{\frac{C'(r_0)}{6g-7}}})^{6g-7}dt
\end{eqnarray}
where $\Vol_{Euc}(\mathbb{S}^{6g-7})$ is the standard $(6g-7)$-dimensional volume of the unit sphere. By Stirling's formula we have
\begin{eqnarray*} 
\frac{\Vol_{Euc}(\mathbb{S}^{6g-7})}{(\sqrt{\frac{C'(r_0)}{6g-7}})^{6g-7}}&\leq& \frac{2\pi^{\frac{6g-7}{2}}}{\Gamma(\frac{6g-7}{2})}  (\frac{6g-7}{C'(r_0)})^{\frac{6g-7}{2}}\\
&\leq & 2\pi^{\frac{6g-7}{2}} (\frac{2}{6g-9})^{3g-\frac{9}{2}}(\frac{6g-7}{C'(r_0)})^{\frac{6g-7}{2}}\\
&\leq & C^{g}
\end{eqnarray*}
for some constant $C>0$. Thus,
\begin{eqnarray} 
\Vol_{wp}(B(X_g;r_g))\leq C^g \int_{0}^{r_g}(\sinh{(\sqrt{\frac{C'(r_0)}{6g-7}}t)})^{6g-7}dt.
\end{eqnarray}

Since $B(X_g;r_g)\subset \sU(\Teich(S_{g}))^{\geq r_0}$, Theorem \ref{mt-g} (or Remark \ref{g-1}) tells that $r_g\leq \sqrt{32\pi \ln{g}}$ for all $g\geq 2$. Note that $\lim_{g\to \infty}\frac{\ln{g}}{g}=0$, thus one may assume that there exists a constant $D>0$ such that 
\[\sinh{(\sqrt{\frac{C'(r_0)}{6g-7}}t)}\leq \frac{Dt}{\sqrt{g}},\ \forall0\leq t \leq r_g.\] 

Thus,
\begin{eqnarray} 
\Vol_{wp}(B(X_g;r_g))\leq C^g \int_{0}^{r_g}(\frac{Dt}{\sqrt{g}})^{6g-7}dt.
\end{eqnarray}

Recall that $r_g\leq \sqrt{32\pi \ln{g}}$. A direct computation gives that
\begin{eqnarray} 
\Vol_{wp}(B(X_g;r_g))\leq E^g \frac{(\ln{g})^g}{g^{3g}}
\end{eqnarray}
for some constant $E>0$. Observe that for any $\epsilon>0$, 
\[\displaystyle \lim_{g\to \infty}\frac{E^g \frac{(\ln{g})^g}{g^{3g}}}{(\frac{1}{g})^{(3-\epsilon)g}}=0.\]

Then, there exists a constant $F>0$ such that
\begin{eqnarray} 
\Vol_{wp}(B(X_g;r_g))\leq F\cdot (\frac{1}{g})^{(3-\frac{\epsilon}{2})g} .
\end{eqnarray}

Since the geodesic ball $B(X_g;r_g)\subset \sU(\Teich(S_{g}))^{\geq r_0}$ is arbitrary, the conclusion follows.
\end{proof}

\begin{proof}[Proof of Corollary \ref{R-0}]
Let $r_0=1$ in Theorem \ref{decay-0}. For any fixed constant $R>0$, by Theorem \ref{decay-0} it suffices to show that there exists a constant $\epsilon(R)>0$ such that
\begin{eqnarray}\label{7-1} 
B(X_g; R)\subset \sU(\Teich(S_{g}))^{\geq 1}, \ \forall X_g \in \sU(\Teich(S_{g}))^{\geq \epsilon(R)}.
\end{eqnarray}

We choose $\epsilon(R)=R+1$. The triangle inequality tells that for all $Y \in B(X_g; R)$,
\begin{eqnarray*}
\dist_{wp}(Y, \partial(\Teich(S_{g,n}))&\geq& \dist_{wp}(X_g, \partial(\Teich(S_{g,n})))-\dist_{wp}(Y, X_g)\\
&\geq& \epsilon(R)-R\\
&=&1.
\end{eqnarray*}

Then Equation (\ref{7-1}) follows since $Y \in B(X_g; R)$ is arbitrary.
\end{proof}

\begin{remark}
Theorem 4.2 in \cite{Mirz13} tells that the \wep volume of moduli space $\sM_g$ is concentrated in the thick part as the genus $g$ tends to infinity, which blows up rapidly. Theorem \ref{decay-0} says that the \wep volume of any \wep geodesic ball in the thick part of moduli space will decay to $0$ as $g$ tends to infinity. It would be very \textsl{interesting} to study the asymptotic shape of $\sM_g$ as $g$ tends to infinity. 
\end{remark}

\bibliographystyle{amsalpha}
\bibliography{ref}

\end{document}